\newcommand{\RR}{\mathbb{R}}
\newcommand{\CC}{\mathbb{C}}
\newcommand{\ca}{{\mathfrak{a}}}
\newcommand{\cA}{{\mathfrak{A}}}
\newcommand{\cd}{{\mathfrak{d}}}
\newcommand{\cf}{{\mathfrak{f}}}
\newcommand{\cg}{{\mathfrak{g}}}
\newcommand{\ch}{{\mathfrak{h}}}
\newcommand{\ck}{{\mathfrak{k}}}
\newcommand{\cl}{{\mathfrak{l}}}
\newcommand{\cm}{{\mathfrak{m}}}
\newcommand{\cn}{{\mathfrak{n}}}
\newcommand{\co}{{\mathfrak{o}}}
\newcommand{\cp}{{\mathfrak{p}}}
\newcommand{\cq}{{\mathfrak{q}}}
\newcommand{\cs}{{\mathfrak{s}}}
\newcommand{\cu}{{\mathfrak{u}}}
\newcommand{\cX}{{\mathfrak{X}}}
\newcommand{\calC}{{\mathcal{C}}}
\newcommand{\calQ}{{\mathcal{Q}}}
\newcommand{\calK}{{\mathcal{K}}}
\newcommand{\id}{{\rm {id}}}
\newcommand{\Ric}{{\rm {Ric}}}
\newcommand{\tr}{{\rm {tr}}}
\newcommand{\Ad}{{\rm {Ad}}}
\newcommand{\Aut}{{\rm {Aut}}}
\newcommand{\ad}{{\rm {ad}}}
\newcommand{\rk}{{\rm {rk}}}
\newcommand{\Exp}{{\rm {Exp}}}
\newcommand{\End}{{\rm {End}}}
\newcommand{\sC}{\Gamma(\calC)}
\newtheorem{thm}{Theorem}[section]
\newtheorem{prop}[thm]{Proposition}
\newtheorem{cor}[thm]{Corollary}
\newtheorem{re}[thm]{Remark}
\numberwithin{equation}{section}
\begin{document}

\title[Foliated Hopf hypersurfaces in complex hyperbolic quadrics]{Foliated Hopf hypersurfaces\\ in complex hyperbolic quadrics}

\author{J\"{u}rgen Berndt}
\address{Department of Mathematics, King's College London, London, WC2R 2LS, United Kingdom}
\email{jurgen.berndt@kcl.ac.uk}
\thanks{}

\subjclass[2020]{Primary 53C15, 53C35, 53C40, 53C55; Secondary 53C12, 53D10}
\keywords{K\"{a}hler manifold, Hermitian symmetric space, complex hyperbolic quadric, real hypersurface, Hopf hypersurface, homogeneous real hypersurface, contact hypersurface, maximal complex subbundle, Riemannian foliation}

\begin{abstract}
This paper deals with a limiting case motivated by contact geometry. The limiting case of a tensorial characterization of contact hypersurfaces in K\"{a}hler manifolds leads to Hopf hypersurfaces whose maximal complex subbundle of the tangent bundle is integrable. It is known that in non-flat complex space forms and in complex quadrics such real hypersurfaces do not exist, but the existence problem in other irreducible K\"{a}hler manifolds is open. In this paper we construct explicitly a one-parameter family of homogeneous Hopf hypersurfaces, whose maximal complex subbundle of the tangent bundle is integrable, in a Hermitian symmetric space of non-compact type and rank two. These are the first known examples of such real hypersurfaces in irreducible K\"{a}hler manifolds.
\end{abstract}

\maketitle 

\thispagestyle{empty}

\section {Introduction}

We start with the motivation for this paper. A contact manifold is a smooth odd-dimensional manifold $M$ together with a $1$-form $\eta$ on $M$ satisfying $\eta \wedge (d\eta)^{n-1} \neq 0$, where $\dim_\RR(M) = 2n-1$. Such a $1$-form $\eta$ is called a contact form. The kernel of $\eta$ defines a hyperplane distribution $\calC$ on $M$, the so-called contact distribution.  The contact condition $\eta \wedge (d\eta)^{n-1} \neq 0$ means that the maximal possible dimension of a submanifold of $M$ all of whose tangent spaces are contained in $\calC$ is equal to $n-1$. The contact condition  therefore is a measure for maximal non-integrability of $\calC$.

Let $\bar{M}$ be a K\"{a}hler manifold with K\"{a}hler structure $J$, K\"{a}hler metric $g$ and $n = \dim_\CC(\bar{M}) \geq 2$. Let $M$ be a real hypersurface in $\bar{M}$ and $(\phi,\xi,\eta,g)$ be the the induced almost contact metric structure on $M$ (see Section \ref{mcs}). The subbundle $\calC = \ker(\eta) = TM \cap J(TM)$ of $TM$ is the maximal complex subbundle of the tangent bundle $TM$. The real hypersurface $M$ is said to be a contact hypersurface if there exists an everywhere non-zero smooth function $f : M \to \RR$ so that $d\eta = 2f \omega$, where $\omega$ is the fundamental $2$-form on $M$ defined by $\omega(X,Y) = g(\phi X,Y)$ for all $X,Y \in \cX(M)$. The fundamental $2$-form $\omega$ is always closed, which implies $\eta \wedge d\eta^{n-1} = (2f)^{n-1}(\eta \wedge \omega^{n-1}) \neq 0$ if $M$ is a contact hypersurface. Thus every contact hypersurface in a K\"{a}hler manifold is a contact manifold. In this situation the maximal complex subbundle $\calC$ of the tangent bundle of the contact hypersurface coincides with the contact distribution. A natural problem is to determine the contact hypersurfaces in K\"{a}hler manifolds.

The first systematic study of contact hypersurfaces in K\"{a}hler manifolds was carried out by Okumura \cite{Ok66}. Okumura proved the following very useful characterization of contact hypersurfaces in K\"{a}hler manifolds: A real hypersurface $M$ in a K\"{a}hler manifold $\bar{M}$ is a contact hypersurface if and only if there exists an every non-zero smooth function $f : M \to \RR$ so that that the shape operator $A$ of $M$ and the structure tensor field $\phi$ satisfy $A\phi + \phi A = 2f \phi$. It is not difficult to prove that the function $f$ is constant when $n > 2$ (see \cite{BS22}, Proposition 3.5.4). Starting from Okumura's work, contact hypersurfaces were classified in various Hermitian symmetric spaces (see \cite{BS22} for an overview). The motivation for this paper is to understand the limiting case $f = 0$. We will show (see Proposition \ref{HopfCint}) that the limiting case $f = 0$ characterizes Hopf hypersurfaces  in K\"{a}hler manifolds for which the maximal complex subbundle $\calC$ is integrable. For the concept of Hopf hypersurfaces see Section \ref{mcs}.

The totally geodesic real hypersurface $\RR^{2n-1}$ in the complex Euclidean space $\CC^n$ is an elementary example of a Hopf hypersurface whose maximal complex subbundle $\calC$ is integrable. In contrast, it is quite remarkable and not obvious that in non-flat complex space forms there are no Hopf hypersurfaces whose maximal complex subbundle $\calC$ is integrable. This is not difficult to prove for the complex projective space $\CC P^n(c)$ with the Fubini-Study metric of constant holomorphic sectional curvature $c > 0$, but the proof is quite involved for the complex hyperbolic space $\CC H^n(c)$ with the Bergman metric of constant holomorphic sectional curvature $c < 0$. A detailed discussion of these two cases can be found in Section 2 of \cite{NR97}. These non-existence results raise the existence question for other irreducible K\"{a}hler manifolds. In \cite{BS22}, the geometry of real hypersurfaces in some irreducible Hermitian symmetric spaces of rank $2$ was investigated. One of these Hermitian symmetric spaces is the Grassmann manifold $SO_{2+n}/(SO_2\times SO_n)$ of oriented $2$-planes in $\RR^{2+n}$, which is isometric to the complex quadric $Q^n$ in $\CC P^{n+1}(c)$ (with a suitable normalization of the metric). From the investigations in \cite{BS22}, Section 6.4, we can conclude that there are no Hopf hypersurfaces in this Hermitian symmetric space for which $\calC$ is integrable. 

In this paper we investigate the existence question in the dual Hermitian symmetric space of non-compact type, the complex hyperbolic quadric ${Q^n}^* = SO^o_{2,n}/(SO_2\times SO_n)$. Surprisingly, we can construct a one-parameter family of pairwise non-congruent homogeneous Hopf hypersurfaces in ${Q^n}^*$ whose maximal complex subbundle $\calC$ is integrable. 

\begin{thm}\label{mainthm}
There exists a one-parameter family $M^{2n-1}_\alpha$, $0 \leq \alpha < \infty$, of (pairwise non-congruent) homogeneous Hopf hypersurfaces, whose maximal complex subbundle of the tangent bundle is integrable, in the Hermitian symmetric space ${Q^n}^* = SO^o_{2,n}/(SO_2\times SO_n)$, $n \geq 3$. 
\end{thm}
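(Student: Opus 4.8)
The plan is to realize $Q^{n*}$ through the solvable Lie group model arising from an Iwasawa decomposition of $G = SO^o_{2,n}$, to produce $M_\alpha$ as the orbits of a one-parameter family of codimension-one solvable subgroups, and then to verify the single tensorial identity $A\phi + \phi A = 0$; by Proposition \ref{HopfCint} (the limiting case $f = 0$ of Okumura's condition) this is exactly the statement that $M_\alpha$ is Hopf with integrable maximal complex subbundle $\calC$. First I would fix the Cartan decomposition $\cg = \ck \oplus \cp$, a maximal abelian subspace $\ca \subset \cp$ (so $\dim \ca = 2$), and the associated restricted root space decomposition $\cg = \cg_0 \oplus \bigoplus_\lambda \cg_\lambda$. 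The restricted root system is of type $B_2$, with short roots $\pm e_1, \pm e_2$ of multiplicity $n-2$ and long roots $\pm e_1 \pm e_2$ of multiplicity one, so that $\cn = \cg_{e_1} \oplus \cg_{e_2} \oplus \cg_{e_1 - e_2} \oplus \cg_{e_1 + e_2}$ has dimension $2n - 2$ and $\cs = \ca \oplus \cn$ has dimension $2n$. The group $S = AN$ then acts simply transitively on $Q^{n*}$, and the left-invariant metric it inherits identifies $\cs$ with $T_o Q^{n*}$. I would next pin down the K\"{a}hler data in this model: the complex structure $J$ is $\ad(Z)|_{\cs}$ for the generator $Z$ of the one-dimensional center $\mathfrak{so}_2 \subset \ck$, and I would record explicitly how $J$ interchanges $\ca$ with the long root spaces and pairs up the remaining root spaces, together with the action of the parallel family $\cA$ of real structures carried by the quadric.

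Next I would define the family. For each $\alpha \in [0,\infty)$ I would take a unit vector $\nu_\alpha \in \cs$ built as an $\alpha$-dependent combination of a fixed vector in $\ca$ and vectors in the long root spaces $\cg_{e_1 \pm e_2}$, and set $\cs_\alpha = \cs \ominus \RR \nu_\alpha$ and $M_\alpha = S_\alpha \cdot o$ with $S_\alpha = \exp(\cs_\alpha)$. The first thing to check is that $\cs_\alpha$ is a subalgebra, i.e.\ $[\cs_\alpha, \cs_\alpha] \subseteq \cs_\alpha$; this reduces to a finite set of bracket relations among $\ca$ and the four root spaces and fixes the admissible form of $\nu_\alpha$. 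By construction $M_\alpha$ is a homogeneous real hypersurface with $T_o M_\alpha = \cs_\alpha$ and unit normal $\nu_\alpha$, so the structure vector field at $o$ is $\xi = -J\nu_\alpha \in \cs_\alpha$.

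The heart of the argument is the shape operator computation. For the left-invariant metric the Levi-Civita connection is given by the Koszul formula $2\langle \nabla_X Y, Z\rangle = \langle [X,Y], Z\rangle - \langle [Y,Z], X\rangle + \langle [Z,X], Y\rangle$, so the shape operator of $M_\alpha$ is $\langle A X, Y\rangle = \langle \nu_\alpha, \nabla_X Y\rangle$ for $X, Y \in \cs_\alpha$. Evaluating this block by block over the root space decomposition, I would verify the identity $A\phi + \phi A = 0$ on all of $\cs_\alpha$. Applied to $\xi$ this already forces $A\xi = \delta\xi$ (the Hopf condition), and the full identity is precisely the $f = 0$ case of $A\phi + \phi A = 2f\phi$, so Proposition \ref{HopfCint} gives the integrability of $\calC$. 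I expect this step to be the main obstacle: the vector $\nu_\alpha$ must be chosen so that its orthogonal complement is a subalgebra and $A\phi + \phi A$ vanishes simultaneously, and making the interplay between $J$, the real structures $\cA$, and the root-space brackets cancel correctly is the delicate part.

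Finally, homogeneity of $M_\alpha$ is immediate since it is an orbit of $S_\alpha$, and pairwise non-congruence I would establish by exhibiting a congruence invariant that separates the parameters --- for instance the principal curvature $\delta$ in $A\xi = \delta \xi$, or the angle between $\nu_\alpha$ and its image under the real structures $\cA$ (which is preserved by the isometries of $Q^{n*}$) --- and checking that it takes distinct values for distinct $\alpha \in [0,\infty)$.
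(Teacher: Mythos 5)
Your ansatz does work, but only for part of the claimed family, and for that part it essentially coincides with the paper's own constructions in Sections \ref{mhHs}--\ref{canexthoro}. The bracket relations you defer do pin down the admissible normals more narrowly than your ansatz suggests: a component of $\nu_\alpha$ in the long root space $\cg_{\epsilon_1+\epsilon_2} = \cg_{\alpha_1+2\alpha_2}$ is impossible, because this one-dimensional space is the commutator $[\cg_{\alpha_2},\cg_{\alpha_1+\alpha_2}]$ of the Heisenberg part of $\cn$ and both factors always lie in $\cs_\alpha$; moreover, if the $\cg_{\alpha_1}$-component of $\nu_\alpha$ is non-zero, then writing $\nu_\alpha = \cos(\varphi)\hat{H} + \sin(\varphi)\hat{X}_1$ with $\hat{H} \in \ca$, $\hat{X}_1 \in \cg_{\alpha_1}$ unit vectors, the elements $H' \in \ca \ominus \RR\hat{H}$ and $w = -\sin(\varphi)\hat{H} + \cos(\varphi)\hat{X}_1$ lie in $\cs_\alpha$ while $\langle [H',w],\nu_\alpha\rangle = \cos(\varphi)\sin(\varphi)\,\alpha_1(H')$, so closure forces $\hat{H} \in \RR H_{\alpha_1}$. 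Thus the admissible normals form the circle in $\RR H_{\alpha_1} \oplus \cg_{\alpha_1}$, and the orbits through $o$ are exactly: the minimal hypersurface $M^{2n-1}_0$ at $\varphi = \pi/2$ (Section \ref{mhHs}, where the paper uses the same shape operator formula $\hat{A}\hat{X} = [\zeta,\hat{X}]_{\cs_1}$ that your Koszul computation yields), the horocyclic hypersurface $M^{2n-1}_2$ at $\varphi = 0$ (Section \ref{canexthoro}), and the equidistant hypersurfaces $M^{2n-1}_\alpha$, $0 < \alpha < 2$, for intermediate $\varphi$ (which the paper instead obtains via Jacobi field theory in Section \ref{eqdihyp}). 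On this range your unified algebraic treatment is fine, and arguably cleaner than the paper's case-by-case treatment.

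The genuine gap is that no choice of $\nu_\alpha$ can produce the hypersurfaces with Hopf principal curvature $\alpha > 2$, so your family terminates at $\alpha = 2$ instead of covering $[0,\infty)$. The obstruction is structural, not computational: the Reeb integral curves of your orbits are orbits of one-parameter subgroups of $AN$ inside a totally geodesic $\CC H^1(-4)$, hence geodesics, equidistant curves or horocycles, whose geodesic curvature is at most $2$; curves of curvature $\alpha > 2$ in $\CC H^1(-4)$ are circles, i.e.\ orbits of elliptic one-parameter groups, and these are never orbits of subgroups of $AN$ since a simply connected solvable group contains no non-trivial compact subgroup. Equivalently, every connected subgroup of $AN$ is closed and diffeomorphic to a Euclidean space, so each of your orbits is a copy of $\RR^{2n-1}$, whereas the hypersurfaces with $\alpha > 2$ are diffeomorphic to $S^1 \times \RR^{2n-2}$. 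The paper fills the range $(2,\infty)$ by a different mechanism: it builds the homogeneous complex hypersurface $P^{n-1} \cong \CC H^{n-1}(-4)$ from the subalgebra $\RR H_{\alpha_1+2\alpha_2} \oplus \cg_{\alpha_2} \oplus \cg_{\alpha_1+\alpha_2} \oplus \cg_{\alpha_1+2\alpha_2}$ (Section \ref{hcs}), proves it is curvature-adapted, and computes the shape operators of the tubes $P^{2n-1}_r$ around it by Jacobi field theory (Section \ref{thch}), obtaining principal curvatures $0,1,-1,2\coth(2r)$ and the identity $A\phi + \phi A = 0$, with $\alpha = 2\coth(2r)$; the acting group here has Lie algebra $\ck_{\alpha_1} \oplus \cd$, containing a compact $\cs\co_2$-factor, and is not conjugate into $\ca \oplus \cn$. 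You would need to add this tube construction (or an equivalent) to complete the proof; your non-congruence argument via the constant Hopf principal curvature $\alpha$ then works across the whole family exactly as you propose.
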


We give a brief geometric description of these real hypersurfaces. We normalize the Riemannian metric on ${Q^n}^*$ so that the minimum of the sectional curvature is equal to $-4$. The complex hyperbolic quadric ${Q^n}^*$ is equipped with a circle bundle $\cA_0$ of real structures (see Section \ref{tchq}). This circle bundle determines a maximal $\cA_0$-invariant subbundle $\calQ$ of the tangent bundle $TM$ of $M$. The maximal Satake compactification of ${Q^n}^* = SO^o_{2,n}/(SO_2\times SO_n)$ has two boundary components of rank $1$, namely a complex hyperbolic line $B_1 \cong \CC H^1(-4)$ of constant (holomorphic) sectional curvature $-4$ and a real hyperbolic space $B_2 \cong\RR H^{n-2}(-2)$ of constant sectional curvature $-2$. It is an interesting fact that all non-zero tangent vectors of $B_1$ are singular tangent vectors of ${Q^n}^*$ of a particular type (that is, tangent vectors that are contained in more than one maximal flat of ${Q^n}^*$).
In \cite{BT13}, we developed a technique, the so-called canonical extension method, for extending isometric actions on boundary components of irreducible Riemannian symmetric spaces of non-compact type to isometric actions on the entire symmetric space. This method can be used to extend submanifolds in boundary components. By extending a point in the boundary component $B_1 \cong \CC H^1(-4)$ we obtain an isometric embedding $P^{n-1}$ of the complex hyperbolic space $ \CC H^{n-1}(-4)$ into ${Q^n}^*$ as a homogeneous complex hypersurface. This construction will be explained in detail in Section \ref{hcs}, where we will also investigate the geometry of this homogeneous complex hypersurface.

This homogeneous complex hypersurface $P^{n-1}$ will appear as the integral manifolds of the integrable distribution $\calC$ in our examples. The Langlands decomposition of the parabolic subgroup of $SO^o_{2,n}$ with boundary component $B_1 \cong \CC H^1(-4)$ induces a horospherical decomposition $B_1 \times \RR \times H^{2n-3}$ of ${Q^n}^*$, where $H^{2n-3}$ is the $(2n-3)$-dimensional Heisenberg group with $1$-dimensional center. The product $\RR \times H^{2n-3}$ corresponds to the homogeneous complex hypersurface $P^{n-1} \cong \CC H^{n-1}(-4)$. 
Now take any complete curve $\gamma$ in the boundary component $B_1 \cong \CC H^1(-4)$ with constant geodesic curvature $\alpha \geq 0$. The curve $\gamma$ is a geodesic in $\CC H^1(-4)$ if $\alpha = 0$, an equidistant curve to a geodesic  in $\CC H^1(-4)$ if $0 < \alpha < 2$, a horocycle if $\alpha = 2$, or a closed circle in $\CC H^1(-4)$ if $2 < \alpha < \infty$. Sliding the homogeneous complex hypersurface $P^{n-1} \cong \CC H^{n-1}(-4)$ along the curve $\gamma$ in a suitable way, we obtain a homogeneous Hopf hypersurface $M^{2n-1}_\alpha$ in ${Q^n}^*$ whose maximal complex subbundle $\calC$ is integrable. We will see that the homogeneous real hypersurface $M^{2n-1}_\alpha$ has constant principal curvatures $\alpha$, $0$, $+1$, $-1$ with multiplicities $1$, $2$, $n-1$, $n-1$, respectively. The principal curvature space $T_\alpha$ is equal to the orthogonal complement $\calC^\perp$ of $\calC$ in $TM$. The principal curvature space $T_0$  is equal to the orthogonal complement $\calC \ominus \calQ$ of $\calQ$ in $\calC$. The principal curvature spaces $T_1$ and $T_{-1}$ span $\calQ$, are mapped into each other by the structure tensor field $\phi$, and are equal to the $\pm 1$-eigenspaces of the restriction to $\calQ$ of a suitable real structure in $\cA_0$. The hypersurfaces $M^{2n-1}_\alpha$ will in fact be constructed through an algebraic process, and the ``sliding'' description is a geometric interpretation of this algebraic construction, which will be explained thoroughly during the construction process. The homogeneous real hypersurface $M^{2n-1}_\alpha$ is diffeomeorphic to $\RR^{2n-1}$ for $0 \leq \alpha \leq 2$ and diffeomorphic to $S^1 \times \RR^{2n-2}$ for $2 < \alpha < \infty$. 

We point out that none of the homogeneous real hypersurfaces $M^{2n-1}_\alpha$ in Theorem \ref{mainthm} arises as a limit of contact hypersurfaces in ${Q^n}^*$. The classification of contact hypersurfaces in ${Q^n}^*$ can be found in Section 7.8 of \cite{BS22}. For every real number $f > 0$ there exists, up to isometric congruence, a unique connected complete contact hypersurface $\tilde{M}^{2n-1}_f$ in ${Q^n}^*$ satisfying $A\phi + \phi A = 2f\phi$. This family $\tilde{M}^{2n-1}_f$ of contact hypersurfaces collapses to a totally geodesic complex embedding of the complex hyperbolic quadric ${Q^{n-1}}^*$ into ${Q^n}^*$ when taking the limit $f \to 0$, and so $\lim_{f\to 0} \tilde{M}^{2n-1}_f = {Q^{n-1}}^*$ is not a real hypersurface.

The paper is organized as follows. In Section \ref{mcs} we introduce basic concepts from almost contact metric geometry in K\"{a}hler manifolds and provide characterizations of Hopf hypersurfaces and of real hypersurfaces satisfying $A\phi + \phi A = 0$. In Section \ref{tchq} we present two models for the complex hyperbolic quadric ${Q^n}^* = SO^o_{2,n}/(SO_2\times SO_n)$. The first one is the standard symmetric space model, the second one is the solvable Lie group model originating from an Iwasawa decomposition of $SO^o_{2,n}$. The interplay between both models allows us to switch between geometric and algebraic interpretations of relevant concepts. In Section \ref{hcs} we construct the isometric embedding of the complex hyperbolic space $\CC H^{n-1}(-4)$ as a homogeneous complex hypersurface $P^{n-1}$ in ${Q^n}^*$ and discuss aspects of the geometry of this embedding. The homogeneous real hypersurfaces $M^{2n-1}_\alpha$ ($2 < \alpha$) will be constructed in Section \ref{thch} as the tubes around the homogeneous complex hypersurface $P^{n-1}$ in ${Q^n}^*$. In Section \ref{mhHs} we use the theory of parabolic subalgebras of real semisimple Lie algebras for the construction of the minimal homogeneous real hypersurface $M^{2n-1}_0$. In Section \ref{eqdihyp} we construct the homogeneous real hypersurfaces $M^{2n-1}_\alpha$ ($0 < \alpha < 2$) as the equidistant hypersurfaces to $M^{2n-1}_0$.  The homogeneous real hypersurface $M^{2n-1}_2$ will be constructed in Section \ref{canexthoro} as the canonical extension of a horocycle in the boundary component $B_1 \cong \CC H^1(-4)$. We will also investigate the geometry of the homogeneous real hypersurfaces $M^{2n-1}_\alpha$ in the corresponding sections. In Section \ref{curvature} we investigate the curvature of the homogeneous real hypersurfaces $M^{2n-1}_\alpha$.

\section{The maximal complex subbundle of the tangent bundle}\label{mcs}

Let $\bar{M}$ be a K\"{a}hler manifold with K\"{a}hler structure $J$ and K\"{a}hler metric $g$. We always assume $n = \dim_\CC(\bar{M}) \geq 2$. Let $M$ be a real hypersurface in $\bar{M}$. We will denote the induced Riemannian metric on $M$ also by $g$. The Levi Civita covariant derivative of $\bar{M}$ and $M$ is denoted by $\bar\nabla$ and $\nabla$, respectively. The Lie algebra of smooth vector fields on $M$ is denoted by $\cX(M)$.

Let $\zeta$ be a (local) unit normal vector field on $M$. We denote by $A = A_\zeta$ the shape operator of $M$ with respect to $\zeta$. The unit vector field 
\[
\xi = -J\zeta
\]
is the Reeb vector field on $M$. The flow of the Reeb vector field $\xi$ is the Reeb flow on $M$. We define a $1$-form $\eta$ on $M$ by
\[
\eta(X) = g(X,\xi)
\]
for all $X \in \cX(M)$ and a skew-symmetric tensor field $\phi$ on $M$ by decomposing $JX$ into its tangential component $\phi X$ and its normal component $g(JX,\zeta)\zeta$, that is, 
\[
JX = \phi X + g(JX,\zeta)\zeta  = \phi X + \eta(X)\zeta 
\]
for all $X \in \cX(M)$. The $1$-form $\eta$ is the almost contact form on $M$ and the skew-symmetric tensor field $\phi$ is the structure tensor field on $M$. The quadruple $(\phi,\xi,\eta,g)$ is the induced almost contact metric structure on $M$. Note that 
\[
\eta(\xi) = 1,\ \phi\xi  = 0 \mbox{ and } 
 \phi^2 X  = -X + \eta(X)\xi 
\]
for all $X \in \cX(M)$.
Using the K\"{a}hler property $\bar\nabla J = 0$ and the Weingarten formula we obtain
\[
0 = (\bar\nabla_XJ)\zeta = \bar\nabla_X J\zeta - J\bar\nabla_X\zeta = -\bar\nabla_X\xi + JAX
\]
for all $X \in \cX(M)$. The tangential component of this equation induces the useful equation
\[
\nabla_X\xi = \phi AX
\]
for all $X \in \cX(M)$.

The subbundle
\[
\calC = \ker(\eta) = TM \cap J(TM)
\]
of the tangent bundle $TM$ of $M$ is the maximal complex subbundle of $TM$. We denote by $\sC$ the set of all vector fields $X$ on $M$ with values in $\calC$, that is,
\begin{align*}
\sC & = \{ X \in \cX(M) : X_p \in \calC_p \mbox{ for all } p \in M\} \\
& = \{ X \in \cX(M) : \eta(X) = 0 \} .
\end{align*}

The real hypersurface $M$ is called a Hopf hypersurface if the Reeb flow on $M$ is a geodesic flow, that is, if the integral curves of the Reeb vector field $\xi$ are geodesics in $M$. We have the following characterization of Hopf hypersurfaces.

\begin{prop}
Let $M$ be a real hypersurface in a K\"{a}hler manifold $\bar{M}$ with induced almost contact metric structure $(\phi,\xi,\eta,g)$. The following statements are equivalent:
\begin{enumerate}
\item[\rm (i)] $M$ is a Hopf hypersurface in $\bar{M}$;
\item[\rm (ii)] $\nabla_\xi\xi = 0$;
\item[\rm (iii)] The Reeb vector field $\xi$ is a principal curvature vector of $M$ at every point;
\item[\rm (iv)] The maximal complex subbundle $\calC$ of $TM$ is invariant under the shape operator $A$ of $M$, that is, $A\calC \subseteq \calC$.
\end{enumerate}
\end{prop}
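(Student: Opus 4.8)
The plan is to prove the cyclic chain of equivalences (i) $\Rightarrow$ (ii) $\Rightarrow$ (iii) $\Rightarrow$ (iv) $\Rightarrow$ (i), exploiting the identity $\nabla_X\xi = \phi AX$ derived in the excerpt as the central computational tool. First I would dispose of (i) $\Leftrightarrow$ (ii): an integral curve of $\xi$ is a geodesic in $M$ precisely when its acceleration $\nabla_\xi\xi$ vanishes, so this equivalence is essentially the definition of a geodesic once one notes $\xi$ is a unit field and its flow lines are the $\xi$-integral curves.

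Next I would establish (ii) $\Leftrightarrow$ (iii). Applying the key identity with $X = \xi$ gives $\nabla_\xi\xi = \phi A\xi$, so (ii) says $\phi A\xi = 0$, i.e.\ $A\xi \in \ker\phi$. Since $\ker\phi = \RR\xi$ (because $\phi^2X = -X + \eta(X)\xi$ forces the kernel of the skew-symmetric $\phi$ to be spanned by $\xi$), this means $A\xi$ is a multiple of $\xi$, which is exactly the statement that $\xi$ is a principal curvature vector (iii). Conversely, if $A\xi = \lambda\xi$ then $\phi A\xi = \lambda\phi\xi = 0$.

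For (iii) $\Leftrightarrow$ (iv) I would use the self-adjointness of $A$ together with the characterization $\calC = \xi^\perp$. Suppose $A\xi = \lambda\xi$. For any $X \in \sC$ we have $g(AX,\xi) = g(X,A\xi) = \lambda g(X,\xi) = 0$, so $AX \in \calC$, giving $A\calC \subseteq \calC$. Conversely, if $A\calC \subseteq \calC$, then $A\xi$ must lie in the orthogonal complement of $\calC$ (again by self-adjointness of $A$: for $X \in \sC$, $g(A\xi,X) = g(\xi,AX) = 0$ since $AX \in \calC = \xi^\perp$), and that complement is $\RR\xi$, so $A\xi$ is a principal vector.

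I do not anticipate a genuine obstacle here, as the proposition is a standard repackaging of the formula $\nabla_X\xi = \phi AX$ via elementary linear algebra of the almost contact structure. The only point requiring mild care is the identification $\ker\phi = \RR\xi$ and the repeated use of the self-adjointness of the shape operator to pass between the invariance condition on $\calC$ and the eigenvector condition on $\xi$; both are routine. One could equivalently close the loop by proving (iv) $\Rightarrow$ (iii) directly and (iii) $\Rightarrow$ (ii) via the identity, whichever yields the cleanest exposition.
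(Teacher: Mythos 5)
Your proposal is correct and follows essentially the same route as the paper: both prove (i) $\Leftrightarrow$ (ii) via the acceleration of the integral curves of $\xi$, both prove (ii) $\Leftrightarrow$ (iii) from the identity $\nabla_\xi\xi = \phi A\xi$ together with $\ker(\phi) = \RR\xi$, and both obtain (iii) $\Leftrightarrow$ (iv) from the self-adjointness of $A$ with respect to the decomposition $TM = \calC \oplus \RR\xi$. The only difference is cosmetic: you spell out the last equivalence in detail where the paper calls it obvious.
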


\begin{proof}
Let $p \in M$ and $c : I \to M$ be an integral curve of the Reeb vector field $\xi$ with $0 \in I$ and $c(0) = p$. Then we have $\nabla_{\xi_p}\xi = \nabla_{\dot{c}(0)}\xi = (\xi \circ c)'(0) = \dot{c}'(0)$.
If $M$ is a Hopf hypersurface, then we have $\dot{c}'(0) = 0$ by definition and therefore $\nabla_{\xi_p}\xi = 0$. Since this holds at any point $p \in M$, we obtain $\nabla_\xi\xi = 0$. Conversely, if $\nabla_\xi\xi = 0$, then $\dot{c}' = \nabla_{\dot{c}}\xi = \nabla_{\xi \circ c}\xi = 0$ for any integral curve $c$ of $\xi$. Thus any integral curve of $\xi$ is a geodesic in $M$ and hence $M$ is a Hopf hypersurface. This establishes the equivalence of (i) and (ii)

The kernel $\ker(\phi)$ of the structure tensor field $\phi$ is spanned by the Reeb vector field, that is,
$\ker(\phi) = \RR\xi$.
Since $\nabla_\xi\xi = \phi A\xi$, we therefore see that $\nabla_\xi\xi = 0$ if and only if $A\xi \in \RR\xi$, which shows that (ii) and (iii) are equivalent.

We have the orthogonal decomposition $TM = \calC \oplus \RR\xi$.
Since the shape operator $A$ is self-adjoint, the equivalence of (iii) and (iv) is obvious.
\end{proof}

The next result provides a characterization of real hypersurfaces when taking the limit $f \to 0$ in Okumura's characterization $A\phi + \phi A = 2f\phi$ of contact hypersurfaces in K\"{a}hler manifolds.

\begin{prop} \label{HopfCint}
Let $M$ be a real hypersurface in a K\"{a}hler manifold $\bar{M}$ with induced almost contact metric structure $(\phi,\xi,\eta,g)$. The following statements are equivalent:
\begin{enumerate}
\item[\rm (i)] The almost contact form  $\eta$ is closed, that is, $d\eta = 0$.
\item[\rm (ii)] The shape operator $A$ of $M$ and the structure tensor field $\phi$ satisfy 
\[
A \phi + \phi A = 0.
\]
\item[\rm (iii)] The real hypersurface $M$ is a Hopf hypersurface and the maximal complex subbundle $\calC$ of $TM$ is integrable.
\end{enumerate}
\end{prop}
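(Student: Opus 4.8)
The plan is to establish the three equivalences by exploiting the fundamental formula $\nabla_X\xi = \phi AX$ derived in the excerpt, which translates the differential-geometric data into algebraic conditions on the shape operator. The natural first step is to compute $d\eta$ explicitly. Since $\eta(X) = g(X,\xi)$, I would use the intrinsic formula for the exterior derivative of a $1$-form,
\[
d\eta(X,Y) = \tfrac{1}{2}\bigl( X\eta(Y) - Y\eta(X) - \eta([X,Y]) \bigr)
= \tfrac{1}{2}\bigl( g(\nabla_X\xi,Y) - g(\nabla_Y\xi,X) \bigr),
\]
where the last equality follows from the metric compatibility of $\nabla$ and the torsion-freeness identity $[X,Y] = \nabla_XY - \nabla_YX$. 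Substituting $\nabla_X\xi = \phi AX$ and using that $\phi$ is skew-symmetric while $A$ is self-adjoint, this becomes $d\eta(X,Y) = -\tfrac{1}{2}g\bigl((A\phi + \phi A)X, Y\bigr)$. This single computation immediately yields the equivalence of (i) and (ii): the $2$-form $d\eta$ vanishes identically if and only if the self-adjoint operator $A\phi + \phi A$ vanishes.

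For the equivalence of (ii) and (iii), I would proceed in two directions. First I would show (ii) implies that $M$ is Hopf. Applying $A\phi + \phi A = 0$ to $\xi$ and using $\phi\xi = 0$ gives $\phi A\xi = 0$, hence $A\xi \in \ker(\phi) = \RR\xi$, which is condition (iii) of the previous proposition and so $M$ is Hopf. To see that $\calC$ is integrable under (ii), I would test the integrability of $\calC = \ker(\eta)$ via the fact that a hyperplane distribution defined as the kernel of a $1$-form $\eta$ is integrable (by the Frobenius theorem) precisely when $d\eta(X,Y) = 0$ for all $X,Y \in \sC$; but (ii) already forces $d\eta = 0$ on all of $TM$, so in particular on $\calC$, giving integrability. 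Conversely, assuming (iii), I would recover (ii) by showing the three components of the self-adjoint operator $A\phi + \phi A$ vanish. Because $M$ is Hopf we have $A\xi = \lambda\xi$ for some function $\lambda$, so $(A\phi + \phi A)\xi = \phi A\xi = \lambda\phi\xi = 0$; and for $X \in \sC$, the $\xi$-component $g((A\phi + \phi A)X, \xi) = g(A\phi X,\xi) + g(\phi AX,\xi)$ vanishes using $A\calC \subseteq \calC$ (Hopf) together with $\phi\xi=0$ and self-adjointness. The remaining task is the $\calC$-to-$\calC$ component: here I would invoke integrability of $\calC$, which by the Frobenius criterion above means $d\eta$ vanishes on $\sC$, i.e.\ $g((A\phi+\phi A)X,Y)=0$ for all $X,Y \in \sC$.

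The main obstacle I anticipate is the careful bookkeeping in the converse direction, (iii) $\Rightarrow$ (ii): one must verify that the operator $A\phi + \phi A$ annihilates $\xi$, maps $\calC$ into the direction of $\xi$ trivially, and vanishes on $\calC$, using \emph{both} hypotheses of (iii) in the right places. The subtlety is that integrability of $\calC$ only directly controls $d\eta|_{\calC\times\calC}$, so one needs the Hopf condition to separately dispose of the mixed terms involving $\xi$; neither hypothesis alone suffices. A clean way to organize this is to note that $A\phi + \phi A$ is self-adjoint and $TM = \calC \oplus \RR\xi$ is an orthogonal $A$-invariant (by Hopf) decomposition, so it is enough to check the three blocks separately, and the computation above handles each. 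Once all three blocks are shown to vanish, the equivalence closes the cycle (i) $\Leftrightarrow$ (ii) $\Leftrightarrow$ (iii).
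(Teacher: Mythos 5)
Your proof is correct and follows essentially the same route as the paper: compute $d\eta$ from $\nabla_X\xi = \phi AX$ to identify it (up to a constant) with the bilinear form of $A\phi + \phi A$, then combine the Frobenius criterion with the splitting $TM = \calC \oplus \RR\xi$ so that the $\calC$--$\calC$ block corresponds to integrability of $\calC$ and the blocks involving $\xi$ correspond to the Hopf condition. Two harmless slips that do not affect the argument (which only uses vanishing): with your $\tfrac12$-convention the sign should be $d\eta(X,Y) = +\tfrac{1}{2}\,g\bigl((A\phi + \phi A)X, Y\bigr)$, and $A\phi + \phi A$ is skew-symmetric rather than self-adjoint, as it must be since it represents the $2$-form $d\eta$.
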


\begin{proof}
Using the equation $\nabla_X\xi = \phi AX$, the exterior derivative $d\eta$ of $\eta$ is
\begin{align*} 
d\eta(X,Y) & = d(\eta(Y))(X) - d(\eta(X))(Y) - \eta([X,Y])  \\ 
& = Xg(Y,\xi) - Yg(X,\xi) - g([X,Y],\xi) \\ 
& = g(\nabla_XY,\xi) + g(Y,\nabla_X\xi) - g(\nabla_YX,\xi) - g(X,\nabla_Y\xi) - g([X,Y],\xi)  \\ 
& = g(Y,\phi A X) - g(X, \phi A Y)   \\
& =  g((A\phi + \phi A)X,Y)
\end{align*}
for all $X,Y \in \cX(M)$. It follows that $\eta$ is closed if and only if $A\phi + \phi A = 0$, which shows that (i) and (ii) are equivalent.

The above calculations imply that for $X,Y \in \sC$ we have
\[
\eta([X,Y]) = - d\eta(X,Y)  = -g((A\phi + \phi A)X,Y).
\]
It follows that the distribution $\calC$ is involutive if and only if $g((A\phi + \phi A)X,Y) = 0$ holds for all $X,Y \in \sC$. We have $g((A\phi + \phi A)\xi,Y) = g(\phi A\xi, Y) = 0$ for all $Y \in \sC$ if and only if $A\xi \in \RR\xi$, that is, if and only if $M$ is a Hopf hypersurface. We always have $g((A\phi + \phi A)\xi,\xi) = 0$. Using Frobenius Theorem we can now conclude the equivalence of (ii) and (iii). 
\end{proof}

\section{The complex hyperbolic quadric}\label{tchq}

The complex hyperbolic quadric is the Riemannian symmetric space
\[
{Q^n}^* = SO^o_{2,n}/(SO_2\times SO_n),\ n \geq 1,
\]
where $SO^o_{2,n}$ denotes the identity component of the indefinite special orthogonal group $SO_{2,n}$ and $SO_2\times SO_n$ is embedded canonically into $SO^o_{2,n}$. 
The complex hyperbolic quadric $SO^o_{2,n}/(SO_2\times SO_n)$ is the non-compact dual symmetric space of the complex quadric $SO_{2+n}/(SO_2\times SO_n)$. We put $G = SO^o_{2,n}$, $K = SO_2\times SO_n$, and denote by $o \in {Q^n}^*$ the ``base point'' $I_{2+n}K$ of the homogeneous space $G/K$, where $I_{2+n} \in G$ is the identity $((2+n) \times (2+n))$-matrix. Then $K$ is the isotropy group of $G$ at $o$. We now describe the construction of the complex hyperbolic quadric as a Riemannian symmetric space in some more detail. 

We denote by $M_{2,n}(\RR)$ the real vector space of $(2 \times n)$-matrices with real coefficients. Let 
\[
\cg = \cs\co_{2,n} 
= \left\{ \begin{pmatrix}
A_1 & B \\ B^\top & A_2 
\end{pmatrix} : A_1 \in \cs\co_2,\ A_2\in\cs\co_n,\ B \in M_{2,n}(\RR)
 \right\}
\] 
be the Lie algebra of $G = SO^o_{2,n}$ and 
\[
\ck = \cs\co_2 \oplus \cs\co_n
= \left\{ \begin{pmatrix}
A_1 & 0_{2,n} \\ 0_{n,2} & A_2 
\end{pmatrix} : A_1 \in \cs\co_2,\ A_2\in\cs\co_n
 \right\}
\] 
 be the Lie algebra of $K = SO_2\times SO_n$. Let 
\[
B : \cg \times \cg \to \RR\ ,\ (X,Y) \mapsto \tr(\ad(X)\ad(Y)) = n\tr(XY)
\]
be the Killing form of $\cg$ and 
\[
\cp = \left\{ \begin{pmatrix}
0_{2,2} & B \\ B^\top & 0_{n,n} 
\end{pmatrix} :  B \in M_{2,n}(\RR)
 \right\}
\]
be the orthogonal complement of $\ck$ in $\cg$ with respect to $B$. The resulting decomposition $\cg = \ck \oplus \cp$ is a Cartan decomposition of $\cg$.  We identify the tangent space $T_o{Q^n}^*$ of ${Q^n}^*$ at $o$ with $\cp$ in the usual way. 

The Cartan involution $\theta \in \Aut(\cg)$ on $\cg$ is given by 
\[
\theta(X) = I_{2,n} X I_{2,n} \mbox{ with } I_{2,n} = \begin{pmatrix} -I_2 & 0_{2,n} \\ 0_{n,2} & I_n \end{pmatrix},
\] 
where $I_2$ and $I_n$ is the identity $(2 \times 2)$-matrix and $(n \times n)$-matrix respectively. Then 
\[
B_\theta : \cg \times \cg \to \RR\ ,\ (X,Y) = -B(X,\theta(Y)) 
\]
is a positive definite $\Ad(K)$-invariant inner product on $\cg$. The Cartan decomposition $\cg = \ck \oplus \cp$ is orthogonal with respect to $B_\theta$. The restriction of $B_\theta$ to $\cp \times \cp$ induces a $G$-invariant Riemannian metric $g_{B_\theta}$ on ${Q^n}^*$, which is often referred to as the standard homogeneous metric on ${Q^n}^*$. The complex hyperbolic quadric $({Q^n}^*,g_{B_\theta})$ is an Einstein manifold with Einstein constant $-\frac{1}{2}$ (see \cite{WZ85} and use duality between Riemannian symmetric spaces of compact type and of non-compact type). We renormalize the standard homogeneous metric $g_{B_\theta}$ so that the Einstein constant of the renormalized Riemannian metric $g$ is equal to $-2n$, that is,
\[
g_{B_\theta} = 4ng.
\]
This renormalization implies that the minimum of the sectional curvature of $({Q^n}^*,g)$ is equal to $-4$. Note that $({Q^1}^*,g)$ is isometric to the complex hyperbolic line $\CC H^1(-4)$ and $({Q^2}^*,g)$ is isometric to the Riemannian product $\CC H^1(-4) \times \CC H^1(-4)$ of two complex hyperbolic lines. For $n \geq 3$, $({Q^n}^*,g)$ is an irreducible Riemannian symmetric space of non-compact type and rank $2$. We assume $n \geq 3$ in the following.

The Lie algebra $\ck$ decomposes orthogonally into $\ck  = \cs\co_2 \oplus \cs\co_n$. The first factor $\cs\co_2$ is the $1$-dimensional center of $\ck$. The adjoint action of 
\[ 
Z =  \begin{pmatrix}
0 & -1 & 0 & \cdots & 0 \\
1 & 0 & 0 & \cdots & 0 \\
0 & 0 & 1 & \cdots & 0 \\
\vdots & \vdots & \vdots  & \ddots & \vdots\\
0 & 0 & 0 &  \cdots & 1 \\
\end{pmatrix} \in SO_2 \subset SO_2\times SO_n = K
\]
on $\cp$ induces a K\"{a}hler structure $J$ on ${Q^n}^*$. In this way $({Q^n}^*,g,J)$ becomes a Hermitian symmetric space.

We define
\[
 c_0 =  \begin{pmatrix}
1 & 0 & 0 & \cdots & 0 \\
0 & -1 & 0 & \cdots & 0 \\
0 & 0 & 1 & \cdots & 0 \\
\vdots & \vdots & \vdots  & \ddots & \vdots\\
0 & 0 & 0 &  \cdots & 1 \\
\end{pmatrix}
\in O_2\times SO_n . 
 \]
Note that $c_0 \not\in K$, but $c_0$ is in the isotropy group at $o$ of the full isometry group of $({Q^n}^*,g)$. The adjoint transformation $\Ad(c_0)$ leaves $\cp$ invariant and $C_0 = \Ad(c_0)|_\cp$ is an anti-linear involution on $\cp \cong T_o{Q^n}^*$ satisfying $C_0J + JC_0 = 0$. In other words, $C_0$ is a real structure on $T_o{Q^n}^*$. The involution $C_0$ commutes with $\Ad(g)$ for all $g \in SO_n \subset K$ but not for all $g \in K$. More precisly, for $g = (g_1,g_2) \in K$ with $g_1 \in SO_2$ and $g_2 \in SO_n$, say $g_1 = \left( \begin{smallmatrix} \cos(\varphi) & -\sin(\varphi) \\ \sin(\varphi) & \cos(\varphi) \end{smallmatrix} \right)$ with $\varphi \in \RR$, so that $\Ad(g_1)$ corresponds to multiplication with the complex number $\mu = e^{i\varphi}$, we have
\[
C_0 \circ \Ad(g) = \mu^{-2} \Ad(g) \circ C_0  . 
\]
It follows that we have a circle of real structures
\[
\{\cos(\varphi)C_0 + \sin(\varphi)JC_0 : \varphi \in \RR \} . 
\]
This set is $\Ad(K)$-invariant and therefore generates an $\Ad(G)$-invariant $S^1$-subbundle $\cA_0$ of the endomorphism bundle $\End(T{Q^n}^*)$, consisting of real structures (or conjugations) on the tangent spaces of ${Q^n}^*$.
This $S^1$-bundle naturally extends to an $\Ad(G)$-invariant vector subbundle $\cA$ of $\End(T{Q^n}^*)$ with $\rk(\cA) = 2$, which is parallel with respect to the induced connection on $\End(T{Q^n}^*)$. For any real structure $C \in \cA_0$ the tangent line to the fibre of $\cA$ through $C$ is spanned by $JC$. For every $p \in {Q^n}^*$ and real structure $C \in \cA_p$ we have an orthogonal decomposition
\[
T_p{Q^n}^* = V(C) \oplus JV(C) 
\]
into two totally real subspaces of $T_p{Q^n}^*$. Here $V(C)$ and $JV(C)$ are the $(+1)$- and $(-1)$-eigenspaces of $C$, respectively. 
By construction, we have
\[
V(C_0) = \left\{ 
\begin{pmatrix}
0 & 0 & u_1 & \cdots & u_n \\
0 & 0 & 0 & \cdots & 0 \\
u_1 & 0 & 0 & \cdots & 0 \\
\vdots & \vdots & \vdots  & \ddots & \vdots\\
u_n & 0 & 0 & \cdots & 0 
\end{pmatrix} : u \in \RR^n
\right\}
\]
and
\[
JV(C_0) = \left\{ 
\begin{pmatrix}
0 & 0 & 0 & \cdots & 0 \\
0 & 0 & v_1 & \cdots & v_n \\
0 & v_1 & 0 & \cdots & 0 \\
\vdots & \vdots & \vdots  & \ddots & \vdots\\
0 & v_n & 0 & \cdots & 0 
\end{pmatrix} : v \in \RR^n
\right\}.
\]
For 
\[
C = \cos(\varphi)C_0 + \sin(\varphi)JC_0
\]
and $u \in V(C_0)$ we have
\begin{align*}
& C(\cos(\varphi/2)u + \sin(\varphi/2)Ju) \\
& = \cos(\varphi/2)Cu + \sin(\varphi/2)CJu \\
& = \cos(\varphi/2)Cu - \sin(\varphi/2)JCu \\
& = \cos(\varphi/2)(\cos(\varphi)C_0 + \sin(\varphi)JC_0)u - \sin(\varphi/2)J(\cos(\varphi)C_0 + \sin(\varphi)JC_0)u \\
& = (\cos(\varphi/2)\cos(\varphi) + \sin(\varphi/2)\sin(\varphi))u + 
(\cos(\varphi/2)\sin(\varphi) - \sin(\varphi/2)\cos(\varphi))Ju \\
& = \cos(\varphi/2)u + \sin(\varphi/2)Ju.
\end{align*}
It follows that
\[
V(C) =  \{ \cos(\varphi/2)u + \sin(\varphi/2)Ju : u \in V(C_0) \}.
\]
Geometrically this tells us that, if we rotate a real structure by angle $\varphi$, then the $\pm 1$-eigenspaces rotate by angle $\varphi/2$.

The Riemannian metric $g$, the K\"{a}hler structure $J$ and a real structure $C$ on ${Q^n}^*$ can be used to give an explicit expression of the Riemannian curvature tensor $\bar{R}$ of $({Q^n}^*,g)$ (see \cite{Re96} and use duality). More precisely, we have
\begin{align*}
\bar{R}(X,Y)Z & =  g(X,Z)Y - g(Y,Z)X + g(JX,Z)JY -  g(JY,Z)JX  + 2g(JX,Y)JZ \\
 &  \qquad + g(CX,Z)CY - g(CY,Z)CX + g(JCX,Z)JCY  - g(JCY,Z)JCX 
\end{align*}
for all $X,Y,Z \in \cX({Q^n}^*)$, where $C$ is an arbitrary real structure in $\cA_0$. 
 
For every non-zero tangent vector $v \in \cp \cong T_o{Q^n}^*$ there exists a maximal abelian subspace $\ca \subset \cp$ with $v \in \ca$. If $\ca$ is unique, then $v$ is said to be a regular tangent vector, otherwise $v$ is said to be a singular tangent vector. From the explicit expression of the Riemannian curvature tensor it is straightforward to find the singular tangent vectors of ${Q^n}^*$. There are exactly two types of singular tangent vectors $v \in T_o{Q^n}^*$, which can be characterized as follows:
\begin{enumerate}
\item[(i)] If there exists a real structure $C \in \cA_0$ such that $v \in V(C)$, then $v$ is singular. Such a singular tangent vector is called $\cA$-principal.
\item[(ii)] If there exist a real structure $C \in \cA_0$ and orthonormal vectors $u,w \in V(C)$ such that $\frac{v}{||v||} = \frac{1}{\sqrt{2}}(u+Jw)$, then $v$ is singular. Such a singular tangent vector is called $\cA$-isotropic.
\end{enumerate}
For every unit tangent vector $v \in T_o{Q^n}^*$ there exist a real structure $C \in \cA_0$ and orthonormal vectors $u,w \in V(C)$ such that
\[
v = \cos(t)u + \sin(t)Jw
\]
for some $t \in [0,\frac{\pi}{4}]$. The singular tangent vectors correspond to the boundary values $t = 0$ and $t = \frac{\pi}{4}$.

Let $v$ be a unit tangent vector of ${Q^n}^*$ and consider the Jacobi operator $\bar{R}_v$ defined by
\[
\bar{R}_v X = \bar{R}(X,v)v.
\]
We have
\[
\bar{R}_v X  =  -X + g(X,v)v  - 3g(X,Jv)Jv + g(X,Cv)Cv - g(Cv,v)CX  + g(X,JCv)JCv .
\]
By a straightforward computation we obtain the eigenvalues and eigenspaces of $\bar{R}_v$ (see also \cite{Re96}). The eigenvalues are 
\[
0,-1+\cos(2t),-1-\cos(2t),-2+2\sin(2t),-2-2\sin(2t)
\] 
with corresponding eigenspaces
\begin{align*}
E_0 & = \RR u \oplus \RR w \cong \RR^2, \\
E_{-1+\cos(2t)} & = V(C) \ominus (\RR u \oplus \RR w) \cong \RR^{n-2}, \\
E_{-1-\cos(2t)} & = JV(C) \ominus J(\RR u \oplus \RR w) \cong \RR^{n-2} ,\\
E_{-2+2\sin(2t)} & = \RR(Ju+w) \cong \RR, \\
E_{-2-2\sin(2t)} & = \RR(Ju-w) \cong \RR ,
\end{align*}
where $C$ is a suitable real structure and $u,w \in V(C)$ are orthonormal vectors such that
\[
v = \cos(t)u + \sin(t)Jw
\]
for some $t \in [0,\frac{\pi}{4}]$. The five eigenvalues are distinct unless $t \in \{0,\tan^{-1}(\frac{1}{2}),\frac{\pi}{4}\}$.

If $t = 0$, then $Cv = v$ and hence $v$ is $\cA$-principal. In this case $\bar{R}_v$ has two eigenvalues $0,-2$ with corresponding eigenspaces
\begin{align*}
E_0 & = \RR v \oplus J(V(C)\ominus \RR v) \cong \RR^n,\\
E_{-2} & = \RR Jv \oplus (V(C)\ominus \RR v) \cong \RR^n.
\end{align*}

If $t = \frac{\pi}{4}$, then $v = \frac{1}{\sqrt{2}}(u+Jw)$ and hence $v$ is $\cA$-isotropic. In this case $\bar{R}_v$ has three eigenvalues $0,-1,-4$ with corresponding eigenspaces
\begin{align*}
E_0 & = \RR v \oplus \RR Cv \oplus \RR JCv = \RR v \oplus \CC Cv \cong \RR \oplus \CC,\\
E_{-1} & = \cp \ominus (\CC v \oplus \CC Cv) \cong \CC^{n-2}, \\
E_{-4} & = \RR Jv \cong \RR.
\end{align*}

If $t = \tan^{-1}(\frac{1}{2})$, then $\cos(t) = \frac{2}{\sqrt{5}}$, $\sin(t) = \frac{1}{\sqrt{5}}$, and hence $\cos(2t) = \frac{3}{5}$ and $\sin(2t) = \frac{4}{5}$. In this case $\bar{R}_v$ has four eigenvalues $0,-\frac{2}{5},-\frac{8}{5},-\frac{18}{5}$.

Let $\ca$ be a maximal abelian subspace of $\cp$ and $\ca^*$ be the dual vector space of $\ca$. For each $\alpha \in \ca^*$ we define 
\[
\cg_\alpha = \{ X \in \cg : \ad(H)X = \alpha(H)X \mbox{ for all } H \in \ca\}.
\] 
If $\alpha \neq 0$ and $\cg_\alpha \neq \{0\}$, then $\alpha$ is a restricted root and $\cg_\alpha$ is a restricted root space. Let $\Sigma \subset \ca^*$ be the set of restricted roots.  The restricted root spaces provide a restricted root space decomposition 
\[
\cg = \cg_0 \oplus \left( \bigoplus_{\alpha \in \Sigma} \cg_\alpha \right)
\]
of $\cg$, where $\cg_0 = \ck_0 \oplus \ca$ and $\ck_0 \cong \cs\co_{n-2}$ is the centralizer of $\ca$ in $\ck$. The restricted root spaces $\cg_\alpha$ and $\cg_0$ are pairwise orthogonal with respect to $B_\theta$. The corresponding restricted root system is of type $B_2$. We choose a set $\Lambda = \{\alpha_1,\alpha_2\}$ of simple roots of $\Sigma$ such that $\alpha_1$ is the longer root of the two simple roots, and denote by $\Sigma^+$ the resulting set of positive restricted roots. If we write, as usual, $\alpha_1 = \epsilon_1 - \epsilon_2$ and $\alpha_2 = \epsilon_2$, the positive restricted roots are 
\[
\alpha_1  = \epsilon_1 - \epsilon_2,\ 
\alpha_2  = \epsilon_2,\ 
\alpha_1 + \alpha_2  = \epsilon_1,\ 
\alpha_1 + 2\alpha_2  = \epsilon_1 + \epsilon_2.
\]
The multiplicities of the two long roots $\alpha_1$ and $\alpha_1 + 2\alpha_2$ are equal to $1$, and the multiplicities of the two short roots $\alpha_2$ and $\alpha_1 + \alpha_2$ are equal to $n-2$, respectively. Explicitly, the positive restricted root spaces and $\cg_0$ are:
\begin{eqnarray*}
\cg_0 & = &
 \left\{  
 \begin{pmatrix} 
0 & 0 & a_1 & 0 & 0 & \cdots & 0 \\
0 & 0 & 0 & a_2 & 0 & \cdots & 0 \\
a_1 & 0 & 0 & 0 & 0 & \cdots & 0 \\
0 & a_2 & 0 & 0 & 0 & \cdots & 0 \\
0 & 0 & 0 & 0 & & & \\
\vdots & \vdots & \vdots & \vdots & & B & \\
0 & 0 & 0 & 0 & & & 
\end{pmatrix}
:  a_1,a_2 \in \RR,\ B \in \cs\co_{n-2}
\right\} \cong \RR^2 \oplus \cs\co_{n-2},\\
\cg_{\alpha_1+\alpha_2} & = & \left\{ 
\begin{pmatrix} 
0 & 0 & 0 & 0 & v_1 & \cdots & v_{n-2} \\
0 & 0 & 0 & 0 & 0 & \cdots & 0 \\
0 & 0 & 0 & 0 & v_1 & \cdots & v_{n-2} \\
0 & 0 & 0 & 0 & 0 & \cdots & 0 \\
v_1 & 0 & -v_1 & 0 & 0 & \cdots & 0 \\
\vdots & \vdots & \vdots & \vdots & \vdots & \ddots & \vdots\\
v_{n-2} & 0 & -v_{n-2} & 0 & 0 & \cdots & 0
\end{pmatrix}
:  v \in \RR^{n-2} \right\} \cong \RR^{n-2}, \\
\cg_{\alpha_2} & = & \left\{ 
\begin{pmatrix} 
0 & 0 & 0 & 0 & 0 & \cdots & 0 \\
0 & 0 & 0 & 0 & w_1 & \cdots & w_{n-2} \\
0 & 0 & 0 & 0 & 0 & \cdots & 0 \\
0 & 0 & 0 & 0 &  w_1 & \cdots & w_{n-2} \\
0 & w_1 & 0 & -w_1 & 0 & \cdots & 0 \\
\vdots & \vdots & \vdots & \vdots & \vdots & \ddots & \vdots\\
0 & w_{n-2} & 0 & -w_{n-2} & 0 & \cdots & 0
\end{pmatrix}
:  w \in \RR^{n-2} \right\} \cong \RR^{n-2}, \\
\cg_{\alpha_1} & = & \left\{ 
\begin{pmatrix} 
0 & x & 0 & x & 0 & \cdots & 0 \\
-x & 0 & x & 0 & 0 & \cdots & 0 \\
0 & x & 0 & x & 0 & \cdots & 0 \\
x & 0 & -x & 0 &  0 & \cdots & 0 \\
0 & 0 & 0 & 0 & 0 & \cdots & 0 \\
\vdots & \vdots & \vdots & \vdots & \vdots & \ddots & \vdots\\
0 & 0 & 0 & 0 & 0 & \cdots & 0
\end{pmatrix}
:  x \in \RR \right\} \cong \RR ,\\
\cg_{\alpha_1+2\alpha_2} & = & \left\{  
\begin{pmatrix} 
0 & y & 0 & -y & 0 & \cdots & 0 \\
-y & 0 & y & 0 & 0 & \cdots & 0 \\
0 & y & 0 & -y & 0 & \cdots & 0 \\
-y & 0 & y & 0 &  0 & \cdots & 0 \\
0 & 0 & 0 & 0 & 0 & \cdots & 0 \\
\vdots & \vdots & \vdots & \vdots & \vdots & \ddots & \vdots\\
0 & 0 & 0 & 0 & 0 & \cdots & 0
\end{pmatrix}
:  y \in \RR \right\} \cong \RR.
\end{eqnarray*}
The negative restricted root spaces can be computed easily from the positive restricted root spaces using the fact that $\cg_{-\alpha} = \theta(\cg_\alpha)$.

For each $\alpha \in \Sigma$ we define
\begin{equation*}
\ck_\alpha = \ck \cap (\cg_\alpha \oplus \cg_{-\alpha}) ,\ 
\cp_\alpha = \cp \cap (\cg_\alpha \oplus \cg_{-\alpha}).
\end{equation*}
Then we have $\cp_\alpha = \cp_{-\alpha}$,
$\ck_\alpha = \ck_{-\alpha}$ and $\cp_\alpha \oplus \ck_\alpha = \cg_\alpha
\oplus \cg_{-\alpha}$ for all $\alpha \in \Sigma$. 

We define a nilpotent subalgebra $\cn$ of $\cg$ by 
\begin{align*}
\cn & = \cg_{\alpha_1} \oplus \cg_{\alpha_2} \oplus \cg_{\alpha_1 + \alpha_2} \oplus \cg_{\alpha_1 + 2\alpha_2} \\
& = \left\{ 
\begin{pmatrix}
0 & x+y & 0 & x-y & v_1 & \cdots & v_{n-2} \\
-x-y & 0 & x+y & 0 & w_1 & \cdots & w_{n-2} \\
0 & x+y & 0 & x-y & v_1 & \cdots & v_{n-2} \\
x-y & 0 & -x+y & 0 & w_1 & \cdots & w_{n-2} \\
v_1 & w_1 & -v_1 & -w_1 & 0 & \cdots & 0 \\
\vdots & \vdots & \vdots & \vdots & \vdots & \ddots & \vdots\\
v_{n-2} & w_{n-2} & -v_{n-2} & -w_{n-2} & 0 & \cdots & 0
\end{pmatrix} : 
\begin{array}{l} x,y \in \RR,\\ v,w \in \RR^{n-2} \end{array}
\right\}.
\end{align*}
Then $\cg = \ck \oplus \ca \oplus \cn$ is an Iwasawa decomposition of $\cg$, which induces a corresponding Iwasawa decomposition $G = KAN$ of $G$. Here, $A$ and $N$ are the connected closed subgroups of $G$ with Lie algebras $\ca$ and $\cn$, respectively. 

The subalgebra 
\[
\ca \oplus \cn = 
\left\{ 
\begin{pmatrix}
0 & x+y & a_1 & x-y & v_1 & \cdots & v_{n-2} \\
-x-y & 0 & x+y & a_2 & w_1 & \cdots & w_{n-2} \\
a_1 & x+y & 0 & x-y & v_1 & \cdots & v_{n-2} \\
x-y & a_2 & -x+y & 0 & w_1 & \cdots & w_{n-2} \\
v_1 & w_1 & -v_1 & -w_1 & 0 & \cdots & 0 \\
\vdots & \vdots & \vdots & \vdots & \vdots & \ddots & \vdots\\
v_{n-2} & w_{n-2} & -v_{n-2} & -w_{n-2} & 0 & \cdots & 0
\end{pmatrix} : 
\begin{array}{l} a_1,a_2,x,y \in \RR,\\ 
v,w \in \RR^{n-2} \end{array}
\right\}
\]
 of $\cg$ is solvable and the corresponding connected closed subgroup $AN$ of $G$ with Lie algebra $\ca \oplus \cn$ is solvable, simply connected, and acts simply transitively on ${Q^n}^*$. Then $({Q^n}^*,g)$ is isometric to the solvable Lie group $AN$ equipped with the left-invariant Riemannian metric $\langle \cdot , \cdot \rangle$ defined by
\begin{align*}
\langle H_1 + \hat{X}_1 , H_2 + \hat{X}_2 \rangle & = -\frac{1}{4n}B(H_1,\theta(H_2)) - \frac{1}{8n}B(\hat{X}_1,\theta(\hat{X}_2)) \\
& = -\frac{1}{4}\tr(H_1\theta(H_2)) - \frac{1}{8}\tr(\hat{X}_1\theta(\hat{X}_2)) \\
& = \frac{1}{4}\tr(H_1H_2) - \frac{1}{8}\tr(\hat{X}_1\theta(\hat{X}_2))
\end{align*}
with $H_1,H_2 \in \ca$ and $\hat{X}_1,\hat{X}_2 \in \cn$. For each $\hat{X} \in \cn$, the orthogonal projection $X$ onto $\cp$ with respect to $B_\theta$ is
\[
X = \frac{1}{2}(\hat{X} - \theta(\hat{X})) \in \cp.
\]
By construction, we have $\langle \hat{X} , \hat{X} \rangle = g(X,X)$ and 
\[
\langle H_1 + \hat{X}_1 , H_2 + \hat{X}_2 \rangle = g(H_1 + X_1,H_2 + X_2).
\]

Let $H^1,H^2 \in \ca$ be the dual basis of $\alpha_1,\alpha_2 \in \ca^\ast$ defined by $\alpha_\nu(H^\mu) = \delta_{\nu\mu}$.
Since $\alpha_1 = \epsilon_1 - \epsilon_2$ and $\alpha_2 = \epsilon_2$, we have
\[
H^1 = \begin{pmatrix} 
0 & 0 & 1 & 0 & 0 & \cdots & 0 \\
0 & 0 & 0 & 0 & 0 & \cdots & 0 \\
1 & 0 & 0 & 0 & 0 & \cdots & 0 \\
0 & 0 & 0 & 0 & 0 & \cdots & 0 \\
0 & 0 & 0 & 0 & 0 & \cdots & 0 \\
\vdots & \vdots & \vdots & \vdots & \vdots & \ddots & \vdots\\
0 & 0 & 0 & 0 & 0 & \cdots & 0 \\
\end{pmatrix}
,\ H^2 = \begin{pmatrix} 
0 & 0 & 1 & 0 & 0 & \cdots & 0 \\
0 & 0 & 0 & 1 & 0 & \cdots & 0 \\
1 & 0 & 0 & 0 & 0 & \cdots & 0 \\
0 & 1 & 0 & 0 & 0 & \cdots & 0 \\
0 & 0 & 0 & 0 & 0 & \cdots & 0 \\
\vdots & \vdots & \vdots & \vdots & \vdots & \ddots & \vdots\\
0 & 0 & 0 & 0 & 0 & \cdots & 0 \\
\end{pmatrix}.
\]
Note that
\[
\langle H^1 , H^1 \rangle = \frac{1}{4}\tr(H^1H^1) = \frac{1}{2},\ 
\langle H^2 , H^2 \rangle = \frac{1}{4}\tr(H^2H^2) = 1.
\]

For each $\alpha$ in $\Sigma$ we define the root vector $H_\alpha \in \ca$ of $\alpha$ by
$\langle H_\alpha , H \rangle = \alpha(H)$
for all $H \in \ca$.
Note that
\[
[H,X_\alpha] = \ad(H)X_\alpha = \alpha(H)X_\alpha = \langle H_\alpha , H \rangle X_\alpha
\]  
for all $H \in \ca$ and $X_\alpha \in \cg_\alpha$.
If we put
\[
H_{\alpha} = \begin{pmatrix} 
0 & 0 & x_1 & 0 & 0 & \cdots & 0 \\
0 & 0 & 0 & x_2 & 0 & \cdots & 0 \\
x_1 & 0 & 0 & 0 & 0 & \cdots & 0 \\
0 & x_2 & 0 & 0 & 0 & \cdots & 0 \\
0 & 0 & 0 & 0 & 0 & \cdots & 0 \\
\vdots & \vdots & \vdots & \vdots & \vdots & \ddots & \vdots\\
0 & 0 & 0 & 0 & 0 & \cdots & 0 \\
\end{pmatrix}
,\ 
H = \begin{pmatrix} 
0 & 0 & a_1 & 0 & 0 & \cdots & 0 \\
0 & 0 & 0 & a_2 & 0 & \cdots & 0 \\
a_1 & 0 & 0 & 0 & 0 & \cdots & 0 \\
0 & a_2 & 0 & 0 & 0 & \cdots & 0 \\
0 & 0 & 0 & 0 & 0 & \cdots & 0 \\
\vdots & \vdots & \vdots & \vdots & \vdots & \ddots & \vdots\\
0 & 0 & 0 & 0 & 0 & \cdots & 0 \\
\end{pmatrix},
\]
then
\[
\langle H_\alpha , H \rangle = \frac{1}{4}\tr(H_\alpha H) = \frac{1}{2}(x_1a_1 + x_2a_2).
\]
It follows that
\[
\begin{matrix}
H_{\alpha_1} & = & \begin{pmatrix} 
0 & 0 & 2 & 0 & 0 & \cdots & 0 \\
0 & 0 & 0 & -2 & 0 & \cdots & 0 \\
2 & 0 & 0 & 0 & 0 & \cdots & 0 \\
0 & -2 & 0 & 0 & 0 & \cdots & 0 \\
0 & 0 & 0 & 0 & 0 & \cdots & 0 \\
\vdots & \vdots & \vdots & \vdots & \vdots & \ddots & \vdots\\
0 & 0 & 0 & 0 & 0 & \cdots & 0 
\end{pmatrix} & , & H_{\alpha_1+2\alpha_2} & = & \begin{pmatrix} 
0 & 0 & 2 & 0 & 0 & \cdots & 0 \\
0 & 0 & 0 & 2 & 0 & \cdots & 0 \\
2 & 0 & 0 & 0 & 0 & \cdots & 0 \\
0 & 2 & 0 & 0 & 0 & \cdots & 0 \\
0 & 0 & 0 & 0 & 0 & \cdots & 0 \\
\vdots & \vdots & \vdots & \vdots & \vdots & \ddots & \vdots\\
0 & 0 & 0 & 0 & 0 & \cdots & 0 
\end{pmatrix} & ,\\ 
& & & & & & & \\
H_{\alpha_2} & = &  \begin{pmatrix} 
0 & 0 & 0 & 0 & 0 & \cdots & 0 \\
0 & 0 & 0 & 2 & 0 & \cdots & 0 \\
0 & 0 & 0 & 0 & 0 & \cdots & 0 \\
0 & 2 & 0 & 0 & 0 & \cdots & 0 \\
0 & 0 & 0 & 0 & 0 & \cdots & 0 \\
\vdots & \vdots & \vdots & \vdots & \vdots & \ddots & \vdots\\
0 & 0 & 0 & 0 & 0 & \cdots & 0 
\end{pmatrix} & , &
H_{\alpha_1+\alpha_2} & = & \begin{pmatrix} 
0 & 0 & 2 & 0 & 0 & \cdots & 0 \\
0 & 0 & 0 & 0 & 0 & \cdots & 0 \\
2 & 0 & 0 & 0 & 0 & \cdots & 0 \\
0 & 0 & 0 & 0 & 0 & \cdots & 0 \\
0 & 0 & 0 & 0 & 0 & \cdots & 0 \\
\vdots & \vdots & \vdots & \vdots & \vdots & \ddots & \vdots\\
0 & 0 & 0 & 0 & 0 & \cdots & 0 
\end{pmatrix} & . 
\end{matrix}
\]
We have
\begin{align*}
\langle H_{\alpha_1} , H_{\alpha_1} \rangle & = \frac{1}{4}\tr(H_{\alpha_1}H_{\alpha_1}) = 4,\\
\langle H_{\alpha_1+2\alpha_2} , H_{\alpha_1+2\alpha_2} \rangle & = \frac{1}{4}\tr(H_{\alpha_1+2\alpha_2}H_{\alpha_1+2\alpha_2}) = 4,\\
\langle H_{\alpha_2} , H_{\alpha_2} \rangle & = \frac{1}{4}\tr(H_{\alpha_2}H_{\alpha_2}) = 2,\\
\langle H_{\alpha_1+\alpha_2}  , H_{\alpha_1+\alpha_2}  \rangle & = \frac{1}{4}\tr(H_{\alpha_1+\alpha_2} H_{\alpha_1+\alpha_2} ) = 2,
\end{align*}
and
\[
2H^1 = H_{\alpha_1+\alpha_2} \qquad \mbox{and} \qquad
2H^2 = H_{\alpha_1+2\alpha_2}.
\]

\section{The homogeneous complex hypersurface}
\label{hcs}

In this section we construct a homogeneous complex hypersurface $P^{n-1} \cong \CC H^{n-1}(-4)$ in $({Q^n}^*,g)$ and compute its shape operator. We define
\[
\ch^{2n-3} = \cg_{\alpha_2} \oplus \cg_{\alpha_1+\alpha_2} \oplus \cg_{\alpha_1+2\alpha_2}.
\]
It is easy to verify that $\ch^{2n-3}$ is a nilpotent subalgebra of $\cn$ and isomorphic to the $(2n-3)$-dimensional Heisenberg algebra with $1$-dimensional center.

We have
\[
[H^2,\hat{X}]  = \begin{cases}
\hat{X} &, \mbox{ if } \hat{X} \in \cg_{\alpha_2} \oplus \cg_{\alpha_1+\alpha_2}, \\
2\hat{X} &, \mbox{ if } \hat{X} \in \cg_{\alpha_1+2\alpha_2}.
\end{cases}
\]
It follows that 
\[
\cd = \RR H^2 \oplus \ch^{2n-3} = \RR H_{\alpha_1+2\alpha_2} \oplus \cg_{\alpha_2} \oplus \cg_{\alpha_1+\alpha_2} \oplus \cg_{\alpha_1+2\alpha_2}
\] 
is a solvable subalgebra of $\ca \oplus \cn$. (Note that $\RR H^2$ denotes here the real span of $H^2$ and not the real hyperbolic plane!) In fact, this subalgebra is the standard solvable extension of the Heisenberg algebra $\ch^{2n-3}$ and isomorphic to the solvable Lie algebra of the solvable part of the Iwasawa decomposition of the isometry group of the complex hyperbolic space $\CC H^{n-1}(-4)$ (see e.g.\ \cite{BTV95} or \cite{Ta08}). 

This construction leads to an isometric embedding $\hat{P}^{n-1}$ of the $(n-1)$-dimensional complex hyperbolic space $\CC H^{n-1}(-4)$ with constant holomorphic sectional curvature $-4$ into $(AN,\langle \cdot , \cdot \rangle)$. By construction, $\hat{P}^{n-1}$ is a homogeneous submanifold of $(AN,\langle \cdot , \cdot \rangle)$. Let $\hat{J}$ be the complex structure on $(AN,\langle \cdot , \cdot \rangle)$ corresponding to the complex structure $J$ on $({Q^n}^*,g)$. 
We have $\hat{J}\cg_{\alpha_2} = \cg_{\alpha_1+\alpha_2}$ and $\hat{J}H^2 \in \cg_{\alpha_1+2\alpha_2}$, which shows that the tangent space 
\[
T_o\hat{P}^{n-1} = \RR H_{\alpha_1+2\alpha_2} \oplus \cg_{\alpha_2} \oplus \cg_{\alpha_1+\alpha_2} \oplus \cg_{\alpha_1+2\alpha_2} 
\] 
is a complex subspace of $T_oAN$. Since $AN$ is contained in the identity component $SO^o_{2,n}$ of the full isometry group of ${Q^n}^*$, it consists of holomorphic isometries, which implies that $\hat{P}^{n-1}$ is a complex submanifold of $(AN,\langle \cdot , \cdot \rangle)$.

Altogether we conclude that the solvable subalgebra
\[
\cd = \RR H_{\alpha_1+2\alpha_2} \oplus \cg_{\alpha_2} \oplus \cg_{\alpha_1+\alpha_2} \oplus \cg_{\alpha_1+2\alpha_2}
\]
of $\ca \oplus \cn$ induces an isometric embedding $\hat{P}^{n-1}$ of the complex hyperbolic space $\CC H^{n-1}(-4)$ with constant holomorphic sectional curvature $-4$ into $(AN,\langle \cdot , \cdot \rangle)$ as a homogeneous complex hypersurface. This induces an isometric embedding $P^{n-1}$ of the complex hyperbolic space $\CC H^{n-1}(-4)$ with constant holomorphic sectional curvature $-4$ into $({Q^n}^*,g)$ as a homogeneous complex hypersurface.

\begin{re}
\rm Smyth \cite{Sm68} proved that every homogeneous complex hypersurface in the complex hyperbolic space $\CC H^n$ is a complex hyperbolic hyperplane $\CC H^{n-1}$ embedded in $\CC H^n$ as a totally geodesic submanifold. As we have just seen, up to congruency, there are at least two homogeneous complex hypersurfaces in the complex hyperbolic quadric ${Q^n}^*$, namely the complex hyperbolic quadric ${Q^{n-1}}^*$ and the complex hyperbolic space $P^{n-1} \cong \CC H^{n-1}(-4)$. The first one is totally geodesic (see \cite{CN77} or \cite{Kl08} and use duality between Riemannian symmetric spaces of compact type and of non-compact type), the second one is not. The classification of the homogeneous complex hypersurfaces in the complex hyperbolic quadric ${Q^n}^*$ remains an open problem.
\end{re}

\medskip
We now compute the shape operator $\hat{A}$ of $\hat{P}^{n-1} \cong \CC H^{n-1}(-4)$ in $(AN,\langle \cdot , \cdot \rangle)$. Let 
\[
\hat\zeta \in (\ca \ominus \RR H^2) \oplus \cg_{\alpha_1} = \RR H_{\alpha_1} \oplus \cg_{\alpha_1}
\] 
be a unit normal vector of $\hat{P}^{n-1}$ at $o$. The Weingarten equation tells us that
\[
\langle \hat{A}_{\hat{\zeta}} \hat{X} , \hat{Y} \rangle = - \langle \hat\nabla_{\hat{X}}\hat{\zeta} , \hat{Y} \rangle,
\]
where $\hat\nabla$ is the Levi Civita covariant derivative of $(AN,\langle \cdot , \cdot \rangle)$ and $\hat{X},\hat{Y} \in \cd$. We consider $\hat{\zeta},\hat{X},\hat{Y}$ as left-invariant vector fields. Since $\langle \cdot , \cdot \rangle$ is a left-invariant Riemannian metric, the Koszul formula for $\hat\nabla$ implies
\[
2\langle \hat{A}_{\hat{\zeta}}\hat{X} , \hat{Y} \rangle = 2\langle \hat\nabla_{\hat{X}} \hat{Y}, \hat{\zeta} \rangle = \langle [\hat{X},\hat{Y}],\hat{\zeta} \rangle + \langle [\hat{\zeta},\hat{X}],\hat{Y} \rangle  + \langle [\hat{\zeta},\hat{Y}],\hat{X} \rangle  .
\]
Since $\cd$ is a subalgebra of $\ca \oplus \cn$, we have $[\hat{X},\hat{Y}] \in \cd$ and hence $\langle [\hat{X},\hat{Y}],\hat{\zeta} \rangle = 0$. Moreover, since
$\ad(\hat{\zeta})^* =  -\ad(\theta(\hat{\zeta}))$, we have
\[
\langle [\hat{\zeta},\hat{Y}],\hat{X} \rangle = -\langle [\theta(\hat{\zeta}),\hat{X}],\hat{Y} \rangle.
\]
Altogether this implies
\[
2\langle \hat{A}_{\hat{\zeta}} \hat{X} , \hat{Y} \rangle = \langle [\hat{\zeta}-\theta(\hat{\zeta}),\hat{X}],\hat{Y} \rangle.
\]
Thus, the shape operater $\hat{A}_{\hat{\zeta}}$ of $\hat{P}^{n-1}$ is given by
\[
\hat{A}_{\hat{\zeta}} \hat{X} = [\zeta,\hat{X}]_{\cd},
\]
where
\[
\zeta = \frac{1}{2}(\hat\zeta - \theta(\hat\zeta)) \in \cp
\] 
is the orthogonal projection of $\hat{\zeta}$ onto $\cp$ and $[\, \cdot\ ]_\cd$ is the orthogonal projection onto $\cd$.

The normal space $\nu_o\hat{P}^{n-1}$ of $\hat{P}^{n-1}$ at the point $o$ is given by
\[
\nu_o\hat{P}^{n-1} = \RR H_{\alpha_1} \oplus \cg_{\alpha_1} 
= \left\{ \begin{pmatrix} 
0 & x & a & x & 0 & \cdots & 0 \\
-x & 0 & x & -a & 0 & \cdots & 0 \\
a & x & 0 & x & 0 & \cdots & 0 \\
x & -a & -x & 0 &  0 & \cdots & 0 \\
0 & 0 & 0 & 0 & 0 & \cdots & 0 \\
\vdots & \vdots & \vdots & \vdots & \vdots & \ddots & \vdots\\
0 & 0 & 0 & 0 & 0 & \cdots & 0
\end{pmatrix}
:  a,x \in \RR \right\} ,
\]
and the tangent space $T_o\hat{P}^{n-1}$ of $\hat{P}^{n-1}$ at the point $o$ is given by
\[
T_o\hat{P}^{n-1} = \cd = \left\{ 
\begin{pmatrix}
0 & y & b & -y & v_1 & \cdots & v_{n-2} \\
-y & 0 & y & b & w_1 & \cdots & w_{n-2} \\
b & y & 0 & -y & v_1 & \cdots & v_{n-2} \\
-y & b & y & 0 & w_1 & \cdots & w_{n-2} \\
v_1 & w_1 & -v_1 & -w_1 & 0 & \cdots & 0 \\
\vdots & \vdots & \vdots & \vdots & \vdots & \ddots & \vdots\\
v_{n-2} & w_{n-2} & -v_{n-2} & -w_{n-2} & 0 & \cdots & 0
\end{pmatrix} :
\begin{array}{l} b,y \in \RR,\\ 
v,w \in \RR^{n-2} \end{array} \right\} .
\]

The vector $\hat\zeta = \frac{1}{2}H_{\alpha_1} \in \ca$ is a unit normal vector of $\hat{P}^{n-1}$ at $o$. We have 
\[
\theta(\hat\zeta) =  \frac{1}{2}\theta(H_{\alpha_1}) = -\frac{1}{2}H_{\alpha_1} = - \hat\zeta
\] 
and thus
\[
\zeta = \frac{1}{2}(\hat\zeta - \theta(\hat\zeta)) = \hat\zeta.
\]

A straightforward matrix computation gives
\begin{align*}
& \left[  \left( \begin{smallmatrix} 
0 & 0 & 1 & 0 & 0 & \cdots & 0 \\
0 & 0 & 0 & -1 & 0 & \cdots & 0 \\
1 & 0 & 0 & 0 & 0 & \cdots & 0 \\
0 & -1 & 0 & 0 &  0 & \cdots & 0 \\
0 & 0 & 0 & 0 & 0 & \cdots & 0 \\
\vdots & \vdots & \vdots & \vdots & \vdots & \ddots & \vdots\\
0 & 0 & 0 & 0 & 0 & \cdots & 0
\end{smallmatrix} \right), 
\left( \begin{smallmatrix}
0 & y & b & -y & v_1 & \cdots & v_{n-2} \\
-y & 0 & y & b & w_1 & \cdots & w_{n-2} \\
b & y & 0 & -y & v_1 & \cdots & v_{n-2} \\
-y & b & y & 0 & w_1 & \cdots & w_{n-2} \\
v_1 & w_1 & -v_1 & -w_1 & 0 & \cdots & 0 \\
\vdots & \vdots & \vdots & \vdots & \vdots & \ddots & \vdots\\
v_{n-2} & w_{n-2} & -v_{n-2} & -w_{n-2} & 0 & \cdots & 0
\end{smallmatrix} \right)  \right] \\ & = 
\left( \begin{smallmatrix}
0 & 0 & 0 & 0 & v_1 & \cdots & v_{n-2} \\
0 & 0 & 0 & 0 & -w_1 & \cdots & -w_{n-2} \\
0 & 0 & 0 & 0 & v_1 & \cdots & v_{n-2} \\
0 & 0 & 0 & 0 & -w_1 & \cdots & -w_{n-2} \\
v_1 & -w_1 & -v_1 & w_1 & 0 & \cdots & 0 \\
\vdots & \vdots & \vdots & \vdots & \vdots & \ddots & \vdots\\
v_{n-2} & -w_{n-2} & -v_{n-2} & w_{n-2} & 0 & \cdots & 0
\end{smallmatrix} \right) \in \cd.
\end{align*}
Since the latter matrix is in $\cd$, we conclude that
\[
\hat{A}_{\hat{\zeta}} \hat{X} = \begin{pmatrix}
0 & 0 & 0 & 0 & v_1 & \cdots & v_{n-2} \\
0 & 0 & 0 & 0 & -w_1 & \cdots & -w_{n-2} \\
0 & 0 & 0 & 0 & v_1 & \cdots & v_{n-2} \\
0 & 0 & 0 & 0 & -w_1 & \cdots & -w_{n-2} \\
v_1 & -w_1 & -v_1 & w_1 & 0 & \cdots & 0 \\
\vdots & \vdots & \vdots & \vdots & \vdots & \ddots & \vdots\\
v_{n-2} & -w_{n-2} & -v_{n-2} & w_{n-2} & 0 & \cdots & 0
\end{pmatrix}
\]
with 
\[
\hat{X} = \begin{pmatrix}
0 & y & b & -y & v_1 & \cdots & v_{n-2} \\
-y & 0 & y & b & w_1 & \cdots & w_{n-2} \\
b & y & 0 & -y & v_1 & \cdots & v_{n-2} \\
-y & b & y & 0 & w_1 & \cdots & w_{n-2} \\
v_1 & w_1 & -v_1 & -w_1 & 0 & \cdots & 0 \\
\vdots & \vdots & \vdots & \vdots & \vdots & \ddots & \vdots\\
v_{n-2} & w_{n-2} & -v_{n-2} & -w_{n-2} & 0 & \cdots & 0
\end{pmatrix} \in T_o\hat{P}^{n-1}.
\]
It follows that the principal curvatures of $\hat{P}^{n-1}$ with respect to the unit normal vector $\hat\zeta$ are $0$, $1$ and $-1$, with corresponding principal curvature spaces
\[
\hat{T}^{\hat\zeta}_0 = \RR H_{\alpha_1+2\alpha_2} \oplus \cg_{\alpha_1+2\alpha_2}\ ,\ \hat{T}^{\hat\zeta}_1 = \cg_{\alpha_1+\alpha_2}\ ,\ \hat{T}^{\hat\zeta}_{-1} = \cg_{\alpha_2}.
\]

We now compute the shape operator of $\hat{P}^{n-1}$ at $o$ for other unit normal vectors.
Since $\nu_o\hat{P}^{n-1}$ is $\hat{J}$-invariant, the vector $\hat{J}\hat\zeta \in \cg_{\alpha_1}$ is a unit normal vector of $\hat{P}^{n-1}$ at $o$. Moreover, $\hat\zeta,\hat{J}\hat\zeta$ is an orthonormal basis of the normal space $\nu_o\hat{P}^{n-1}$. Using a well-known formula for the shape operator of a complex submanifold of a K\"{a}hler manifold (see e.g.\ \cite{CR15}, Lemma 7.4), we have
\[
\hat{A}_{\hat{J}\hat\zeta} = \hat{J}\hat{A}_{\hat\zeta}.
\]
Since every unit normal vector of $\hat{P}^{n-1}$ at $o$ is of the form
\[
 \cos(\varphi)\hat\zeta + \sin(\varphi)\hat{J}\hat\zeta,
\]
the shape operator $\hat{A}_{\hat{\zeta}}$ therefore completely determines the shape operator for every other unit normal vector of $\hat{P}^{n-1}$ at $o$. More precisely, we have
\[
\hat{A}_{\cos(\varphi)\hat\zeta + \sin(\varphi)\hat{J}\hat\zeta} = 
\cos(\varphi)\hat{A}_{\hat\zeta} + \sin(\varphi)J\hat{A}_{\hat\zeta}.
\]
This readily implies that the principal curvatures of $\hat{P}^{n-1}$ with respect to the unit normal vector $\cos(\varphi)\hat\zeta + \sin(\varphi)\hat{J}\hat\zeta$ are $0$, $1$ and $-1$, with corresponding principal curvature spaces
\begin{align*}
\hat{T}^{\cos(\varphi)\hat\zeta + \sin(\varphi)\hat{J}\hat\zeta}_0 & 
= \RR H_{\alpha_1+2\alpha_2} \oplus \cg_{\alpha_1+2\alpha_2},\\
\hat{T}^{\cos(\varphi)\hat\zeta + \sin(\varphi)\hat{J}\hat\zeta}_1 & = 
\{  \cos\left(\textstyle{\frac{\varphi}{2}}\right)\hat{X} + \sin\left(\textstyle{\frac{\varphi}{2}}\right)J\hat{X} : \hat{X} \in \cg_{\alpha_1+\alpha_2}\},\\
\hat{T}^{\cos(\varphi)\hat\zeta + \sin(\varphi)\hat{J}\hat\zeta}_{-1} & 
= \{\sin\left(\textstyle{\frac{\varphi}{2}}\right)\hat{X} -  \cos\left(\textstyle{\frac{\varphi}{2}}\right)J\hat{X}  : \hat{X} \in \cg_{\alpha_1+\alpha_2}\}.
\end{align*}
Using orthogonal projections onto $\cp$ we obtain the corresponding description of the shape operator $A$ of $P^{n-1}$ at $o$.

Recall that 
\[
\nu_o\hat{P}^{n-1} = \RR H_{\alpha_1} \oplus \cg_{\alpha_1} .
\]
The orthogonal projection of $\nu_o\hat{P}^{n-1}$ onto $\cp$ is
\[
\nu_oP^{n-1} = \CC H_{\alpha_1} = \RR H_{\alpha_1} \oplus \cp_{\alpha_1} =
\left\{ 
\begin{pmatrix} 
0 & 0 & a & x & 0 & \cdots & 0 \\
0 & 0 & x & -a & 0 & \cdots & 0 \\
a & x & 0 & 0 & 0 & \cdots & 0 \\
x & -a & 0 & 0 &  0 & \cdots & 0 \\
0 & 0 & 0 & 0 & 0 & \cdots & 0 \\
\vdots & \vdots & \vdots & \vdots & \vdots & \ddots & \vdots\\
0 & 0 & 0 & 0 & 0 & \cdots & 0
\end{pmatrix}
:  a,x \in \RR \right\} .
\]
The complex line $\CC H_{\alpha_1}$ is a Lie triple system in $\cp$ and therefore determines a totally geodesic complex submanifold $B_1$ of ${Q^n}^*$. The (non-zero) tangent vectors of $B_1$ are $\cA$-isotropic, which implies that the sectional curvature of $B_1$ is equal to $-4$. Thus $B_1$ is isometric to the complex hyperbolic line $\CC H^1(-4)$ of constant (holomorphic) sectional curvature $-4$. We will encounter $B_1$ again later, where it appears in a horospherical decomposition of the complex hyperbolic quadric. 

We now apply the standard real structure $C_0$ to the normal space $\nu_oP^{n-1} = T_oB_1$,
\begin{align*}
C_0(T_oB_1) & = 
\left\{ 
\begin{pmatrix} 
0 & 0 & a & x & 0 & \cdots & 0 \\
0 & 0 & -x & a & 0 & \cdots & 0 \\
a & -x & 0 & 0 & 0 & \cdots & 0 \\
x & a & 0 & 0 &  0 & \cdots & 0 \\
0 & 0 & 0 & 0 & 0 & \cdots & 0 \\
\vdots & \vdots & \vdots & \vdots & \vdots & \ddots & \vdots\\
0 & 0 & 0 & 0 & 0 & \cdots & 0
\end{pmatrix}
:  a,x \in \RR \right\}  \\
& = \RR H_{\alpha_1+2\alpha_2} \oplus \cp_{\alpha_1+2\alpha_2} = \CC H_{\alpha_1+2\alpha_2} .
\end{align*}
Note that $C_0(T_oB_1) = C(T_oB_1)$ for any real structure $C$ at $o$ and therefore the construction is independent of the choice of real structure.
The complex line $\CC H_{\alpha_1+2\alpha_2}$ is also a Lie triple system in $\cp$ and determines a totally geodesic complex submanifold $\Sigma_1$ of ${Q^n}^*$. The (non-zero) tangent vectors of $\Sigma_1$ are also $\cA$-isotropic, which implies that the sectional curvature of $\Sigma_1$ is equal to $-4$. Thus $\Sigma_1$ is isometric to the complex hyperbolic line $\CC H^1(-4)$ of constant (holomorphic) sectional curvature $-4$. The tangent space $T_o\Sigma_1$ is the kernel of the shape operator of the homogeneous complex hypersurface $P^{n-1} \cong \CC H^{n-1}(-4)$. Since $\RR H_{\alpha_1+2\alpha_2} \oplus \cg_{\alpha_1+2\alpha_2}$ is a subalgebra of $\cd$, this implies geometrically that $\hat{P}^{n-1}$, and hence also $P^{n-1}$, is foliated by totally geodesic complex hyperbolic lines $\CC H^1(-4)$ whose tangent spaces are obtained by rotating the normal spaces of $\hat{P}^{n-1}$ (resp.\ $P^{n-1}$) via a real structure $\hat{C}$ (resp.\ $C$). 

The Riemannian product $B_1 \times \Sigma_1 \cong \CC H^1(-4) \times \CC H^1(-4)$ is isometric to the complex hyperbolic quadric ${Q^2}^*$ and describes the standard isometric embedding of ${Q^2}^*$ into ${Q^n}^*$.

We have
\[
[\cp_{\alpha_1+\alpha_2} \oplus \cp_{\alpha_2},\cp_{\alpha_1+\alpha_2} \oplus \cp_{\alpha_2}] \subset \ck_0 \oplus \ck_{\alpha_1} \oplus \ck_{\alpha_1+2\alpha_2}
\]
and
\begin{align*}
[\ck_0,\cp_{\alpha_1+\alpha_2}] & \subset \cp_{\alpha_1+\alpha_2},\\
[\ck_{\alpha_1},\cp_{\alpha_1+\alpha_2}] & \subset \cp_{\alpha_2},\\
[\ck_{\alpha_1+2\alpha_2},\cp_{\alpha_1+\alpha_2}] & \subset \cp_{\alpha_2},\\
[\ck_0,\cp_{\alpha_2}] & \subset \cp_{\alpha_2},\\
[\ck_{\alpha_1},\cp_{\alpha_2}] & \subset \cp_{\alpha_1+\alpha_2},\\
[\ck_{\alpha_1+2\alpha_2},\cp_{\alpha_2}] & \subset \cp_{\alpha_1+\alpha_2}.
\end{align*}
Altogether we conclude that $\cp_{\alpha_1+\alpha_2} \oplus \cp_{\alpha_2}$ is a Lie triple system. It is easy to see that this Lie triple system is $J$-invariant. The complex totally geodesic submanifold of ${Q^n}^*$ generated by this Lie triple system is isometric to ${Q^{n-2}}^*$. However, the only complex totally geodesic submanifolds of a complex hyperbolic space are again complex hyperbolic spaces (see \cite{Wo63} and use duality). It follows that there exists a totally geodesic submanifold $\Sigma^{n-2} \cong \CC H^{n-2}(-4)$ of $P \cong \CC H^{n-1}(-4)$ with $T_o\Sigma^{n-2} = \cp_{\alpha_1+\alpha_2} \oplus \cp_{\alpha_2}$. We have
\[
T_oP^{n-1} = T_o\Sigma_1 \oplus T_o\Sigma^{n-2},\  \nu_oP^{n-1} = T_oB_1.
\]
The tangent space $T_o\Sigma^{n-2} = \cp_{\alpha_1+\alpha_2} \oplus \cp_{\alpha_2}$ and the normal space $\nu_o\Sigma^{n-2} = \ca \oplus \cp_{\alpha_1+2\alpha_2} \oplus \cp_{\alpha_1}$ are Lie triple systems in $\cp$. 

We summarize the previous discussion in the following theorem.

\begin{thm} \label{ghchP}
There exists a homogeneous complex hypersurface $P^{n-1}$ in $({Q^n}^*,g)$ which is isometric to the complex hyperbolic space $\CC H^{n-1}(-4)$ of constant holomorphic sectional curvature $-4$. In terms of root spaces and root vectors, the tangent space and normal space of $P^{n-1}$ at $o$ is
\[
T_oP^{n-1} = \RR H_{\alpha_1+2\alpha_2} \oplus \cp_{\alpha_1+2\alpha_2} \oplus \cp_{\alpha_1+\alpha_2} \oplus \cp_{\alpha_2},\ \nu_oP^{n-1} = \RR H_{\alpha_1} \oplus \cp_{\alpha_1}.
\]

The normal space $\nu_oP^{n-1}$ is a Lie triple system and the totally geodesic submanifold $B_1$ of ${Q^n}^*$ generated by this Lie triple system is isometric to a complex hyperbolic line $\CC H^1(-4)$ of constant (holomorphic) sectional curvature $-4$. The (non-zero) tangent vectors of $B_1$ are $\cA$-isotropic. In particular, the (non-zero) normal vectors of $P^{n-1}$ are $\cA$-isotropic singular tangent vectors of ${Q^n}^*$.

The tangent space $T_oP^{n-1}$ decomposes orthogonally into 
\[
T_oP^{n-1} = C(\nu_oP^{n-1}) \oplus (\cp_{\alpha_1+\alpha_2} \oplus \cp_{\alpha_2}),
\]
where $C$ is any real structure in $\cA_0$ at $o$.
The subspace $C(\nu_oP^{n-1})$ is a Lie triple system and the totally geodesic submanifold $\Sigma_1$ of ${Q^n}^*$ generated by this Lie triple system is isometric to a complex hyperbolic line $\CC H^1(-4)$ of constant (holomorphic) sectional curvature $-4$. The (non-zero) tangent vectors of $\Sigma_1$ are $\cA$-isotropic. The subspace $\cp_{\alpha_1+\alpha_2} \oplus \cp_{\alpha_2}$ is a Lie triple system in $\cp$ and a complex subspace of $T_oP^{n-1}$. The totally geodesic submanifold of ${Q^n}^*$ generated by this Lie triple system is isometric to the complex hyperbolic quadric ${Q^{n-2}}^*$ and the totally geodesic submanifold of $P^{n-1} \cong \CC H^{n-1}(-4)$ generated by this complex subspace is isometric to the complex hyperbolic space $\CC H^{n-2}(-4)$.

Let $\zeta \in \nu_oP^{n-1}$ be a unit normal vector of $P^{n-1}$. Then $\zeta$ is of the form
\[
\zeta = \frac{1}{2}\cos(\varphi)H_{\alpha_1} + \frac{1}{2}\sin(\varphi)JH_{\alpha_1}
\]
and the principal curvatures of $P^{n-1}$ with respect to $\zeta$ are $0,1,-1$ with corresponding principal curvature spaces
\begin{align*}
T^\zeta_0 & = C(\nu_oP^{n-1}) = T_o\Sigma_1,\\
T^\zeta_1 & = \{  \cos\left(\textstyle{\frac{\varphi}{2}}\right)X + \sin\left(\textstyle{\frac{\varphi}{2}}\right)JX : X \in \cp_{\alpha_1+\alpha_2}\} \subset V(C), \\
T^\zeta_{-1} & = \{\sin\left(\textstyle{\frac{\varphi}{2}}\right)X-  \cos\left(\textstyle{\frac{\varphi}{2}}\right)JX  : X \in \cp_{\alpha_1+\alpha_2}\} \subset JV(C),
\end{align*}
where $C = \cos(\varphi)C_0 + \sin(\varphi)JC_0$. The $0$-eigenspace is independent of the choice of unit normal vector $\zeta$  and coincides with the kernel $T_0$ of the shape operator of $P^{n-1}$. 
\end{thm}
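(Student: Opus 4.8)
The plan is to assemble the facts established in the preceding discussion and organise them around the four assertions of the statement, since the theorem is explicitly a summary.

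First I would record the existence and isometry type together with the tangent and normal spaces. The solvable subalgebra $\cd = \RR H_{\alpha_1+2\alpha_2} \oplus \cg_{\alpha_2} \oplus \cg_{\alpha_1+\alpha_2} \oplus \cg_{\alpha_1+2\alpha_2}$ was identified with the Iwasawa algebra of $\CC H^{n-1}(-4)$, so it is the tangent algebra of a homogeneous complex hypersurface $\hat{P}^{n-1} \cong \CC H^{n-1}(-4)$ in $(AN,\langle\cdot,\cdot\rangle)$, and transporting through the isometry $AN \to {Q^n}^*$ yields $P^{n-1}$. To pass to the symmetric space model I would apply the orthogonal projection $\hat{X} \mapsto \frac{1}{2}(\hat{X} - \theta(\hat{X}))$ onto $\cp$: since $\theta(\cg_\alpha) = \cg_{-\alpha}$, each root space $\cg_\alpha$ projects into $\cp_\alpha$, while $H_{\alpha_1+2\alpha_2} \in \ca \subset \cp$ is fixed. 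This produces $T_oP^{n-1} = \RR H_{\alpha_1+2\alpha_2} \oplus \cp_{\alpha_1+2\alpha_2} \oplus \cp_{\alpha_1+\alpha_2} \oplus \cp_{\alpha_2}$ and, from $\nu_o\hat{P}^{n-1} = \RR H_{\alpha_1} \oplus \cg_{\alpha_1}$, the normal space $\nu_oP^{n-1} = \RR H_{\alpha_1} \oplus \cp_{\alpha_1}$.

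Next I would treat the three Lie triple systems and the associated totally geodesic submanifolds. The complex lines $\CC H_{\alpha_1} = \nu_oP^{n-1}$ and $\CC H_{\alpha_1+2\alpha_2} = C(\nu_oP^{n-1})$ were shown to be Lie triple systems, and checking via the explicit matrix form that their non-zero vectors have the $\cA$-isotropic normal form $\cos(t)u + \sin(t)Jw$ with $t = \frac{\pi}{4}$, the curvature tensor $\bar{R}$ then yields constant holomorphic sectional curvature $-4$ on $B_1$ and $\Sigma_1$, identifying each with $\CC H^1(-4)$. The orthogonal decomposition $T_oP^{n-1} = C(\nu_oP^{n-1}) \oplus (\cp_{\alpha_1+\alpha_2}\oplus\cp_{\alpha_2})$ is then immediate from the root space description together with the mutual orthogonality of distinct root spaces. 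For the last summand, the bracket relation $[\cp_{\alpha_1+\alpha_2}\oplus\cp_{\alpha_2}, \cp_{\alpha_1+\alpha_2}\oplus\cp_{\alpha_2}] \subset \ck_0 \oplus \ck_{\alpha_1} \oplus \ck_{\alpha_1+2\alpha_2}$ and the six containment relations listed above show that $\cp_{\alpha_1+\alpha_2}\oplus\cp_{\alpha_2}$ is a $J$-invariant Lie triple system; the ambient totally geodesic submanifold is ${Q^{n-2}}^*$, and invoking Wolf's theorem that complex totally geodesic submanifolds of a complex hyperbolic space are again complex hyperbolic spaces identifies the corresponding totally geodesic submanifold of $P^{n-1}$ with $\CC H^{n-2}(-4)$.

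Finally I would transfer the shape operator computation from the solvable model to $P^{n-1}$. The formula $\hat{A}_{\hat\zeta}\hat{X} = [\zeta,\hat{X}]_\cd$ produced eigenvalues $0,1,-1$ with eigenspaces $\hat{T}^{\hat\zeta}_0 = \RR H_{\alpha_1+2\alpha_2} \oplus \cg_{\alpha_1+2\alpha_2}$, $\hat{T}^{\hat\zeta}_1 = \cg_{\alpha_1+\alpha_2}$ and $\hat{T}^{\hat\zeta}_{-1} = \cg_{\alpha_2}$, while the complex-submanifold identity $\hat{A}_{\hat{J}\hat\zeta} = \hat{J}\hat{A}_{\hat\zeta}$ fixes the shape operator for every unit normal $\cos(\varphi)\hat\zeta + \sin(\varphi)\hat{J}\hat\zeta$. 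Since the differential of the isometry at $o$ is exactly the projection onto $\cp$, eigenvalues are preserved and projecting the eigenspaces yields the $T^\zeta_\lambda$; in particular $\hat{T}^{\hat\zeta}_0$ projects onto $\CC H_{\alpha_1+2\alpha_2} = T_o\Sigma_1$, which is manifestly independent of $\varphi$ and hence equals the kernel $T_0$. The main obstacle I anticipate is the bookkeeping in the $\varphi$-dependent case: one must match the projected $\pm 1$-eigenspaces with $V(C)$ and $JV(C)$ for the rotated real structure $C = \cos(\varphi)C_0 + \sin(\varphi)JC_0$, which relies precisely on the earlier observation that rotating the real structure by angle $\varphi$ rotates its eigenspaces by angle $\frac{\varphi}{2}$, so that the half-angle factors $\cos(\frac{\varphi}{2}), \sin(\frac{\varphi}{2})$ appearing in $T^\zeta_{\pm 1}$ are consistent with the inclusions $T^\zeta_1 \subset V(C)$ and $T^\zeta_{-1} \subset JV(C)$.
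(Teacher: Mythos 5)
Your proposal is correct and follows essentially the same route as the paper: Theorem \ref{ghchP} is explicitly a summary of the preceding discussion in Section \ref{hcs}, and you assemble precisely the same ingredients — the solvable subalgebra $\cd$ modelled on the Iwasawa algebra of $\CC H^{n-1}(-4)$, the transfer to the symmetric space model via the orthogonal projection $\hat{X} \mapsto \frac{1}{2}(\hat{X}-\theta(\hat{X}))$ onto $\cp$ (which is indeed the differential of the isometry $AN \to {Q^n}^*$ at $o$), the three Lie triple systems with Wolf's theorem for the $\CC H^{n-2}(-4)$ identification, the Koszul-formula computation $\hat{A}_{\hat\zeta}\hat{X} = [\zeta,\hat{X}]_\cd$, the complex-submanifold identity $\hat{A}_{\hat{J}\hat\zeta} = \hat{J}\hat{A}_{\hat\zeta}$, and the half-angle rotation of the eigenspaces of $C = \cos(\varphi)C_0 + \sin(\varphi)JC_0$ to place $T^\zeta_{\pm 1}$ in $V(C)$ and $JV(C)$. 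The one point you flag as a potential obstacle (the $\varphi$-dependent bookkeeping) is resolved exactly as you anticipate, by the observation in Section \ref{tchq} that rotating a real structure by $\varphi$ rotates its $\pm 1$-eigenspaces by $\varphi/2$.
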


Let $M$ be a submanifold of a Riemannian manifold $\bar{M}$ and $\zeta \in \nu_pM$ be a normal vector of $M$. Consider the Jacobi operator $\bar{R}_\zeta = \bar{R}(\cdot,\zeta)\zeta : T_p\bar{M} \to T_p\bar{M}$. If $\bar{R}_\zeta(T_pM) \subseteq T_pM$, then the restriction $\calK_\zeta$ of $\bar{R}_\zeta$ to $T_pM$ is a self-adjoint endomorphism of $T_pM$, the so-called normal Jacobi operator of $M$ with respect to $\zeta$. The family $\calK = (\calK_\zeta)_{\zeta \in \nu M}$ is called the normal Jacobi operator of $M$. 

A submanifold $M$ of a Riemannian manifold $\bar{M}$ is curvature-adapted if for every normal vector $\zeta \in \nu_pM$, $p \in M$, the following two conditions are satisfied:
\begin{enumerate}
\item[(i)] $\bar{R}_\zeta(T_pM) \subseteq T_pM$;
\item[(ii)] the normal Jacobi operator $\calK_\zeta$ and the shape operator $A_\zeta$ of $M$ are simultaneously diagonalizable, that is, 
\[
\calK_\zeta A_\zeta = A_\zeta \calK_\zeta.
\]
\end{enumerate}
Since $\bar{R}_{\lambda\zeta} = \lambda^2 \bar{R}_\zeta$ for all $\lambda > 0$, it suffices to check conditions (i) and (ii) only for unit normal vectors.
Curvature-adapted submanifolds were introduced in \cite{BV92}. They were also studied by Gray in \cite{G04} using the notion of compatible submanifolds. Curvature-adapted submanifolds form a very useful class of submanifolds in the context of focal sets and tubes. 

\begin{cor} \label{Pcurvad}
The homogeneous complex hypersurface $P^{n-1} \cong \CC H^{n-1}(-4)$ in $({Q^n}^*,g)$ is curvature-adapted.
\end{cor}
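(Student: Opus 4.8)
The plan is to exploit the homogeneity of $P^{n-1}$ together with the explicit description of its geometry in Theorem \ref{ghchP} and the eigenvalue analysis of the Jacobi operator $\bar{R}_v$ carried out in Section \ref{tchq}. Since $P^{n-1}$ is homogeneous, it suffices to verify conditions (i) and (ii) at the base point $o$, and since $\bar{R}_{\lambda\zeta} = \lambda^2\bar{R}_\zeta$, it suffices to do so for unit normal vectors $\zeta \in \nu_oP^{n-1}$. By Theorem \ref{ghchP} every such $\zeta$ is $\cA$-isotropic, so the computation of $\bar{R}_\zeta$ falls into the case $t = \frac{\pi}{4}$ of Section \ref{tchq}: there is a real structure $C \in \cA_0$ for which $\bar{R}_\zeta$ has eigenvalues $0,-1,-4$ with eigenspaces $E_0 = \RR\zeta \oplus \CC C\zeta$, $E_{-1} = \cp \ominus (\CC\zeta \oplus \CC C\zeta)$ and $E_{-4} = \RR J\zeta$. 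I would take $C = \cos(\varphi)C_0 + \sin(\varphi)JC_0$ as in Theorem \ref{ghchP}, noting that the line $\CC C\zeta$ is independent of the choice of $C \in \cA_0$ (replacing $C$ by $\cos(\psi)C + \sin(\psi)JC$ only moves $C\zeta$ within $\CC C\zeta$), so this identification is legitimate.

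For condition (i), I would observe that $\nu_oP^{n-1} = \CC H_{\alpha_1} = \RR\zeta \oplus \RR J\zeta$ with $\RR\zeta \subseteq E_0$ and $\RR J\zeta = E_{-4}$. Hence $\nu_oP^{n-1}$ is a sum of subspaces of eigenspaces of $\bar{R}_\zeta$ and is therefore $\bar{R}_\zeta$-invariant; since $\bar{R}_\zeta$ is self-adjoint, its orthogonal complement $T_oP^{n-1}$ is $\bar{R}_\zeta$-invariant as well. This establishes (i) and shows that the normal Jacobi operator $\calK_\zeta = \bar{R}_\zeta|_{T_oP^{n-1}}$ is well defined, with eigenspaces obtained by intersecting the eigenspaces of $\bar{R}_\zeta$ with $T_oP^{n-1}$.

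I would then read off those eigenspaces. Using that $\CC\zeta = \nu_oP^{n-1}$ and $\CC C\zeta = C(\nu_oP^{n-1}) = T^\zeta_0 = T_o\Sigma_1$ (the last two equalities coming from Theorem \ref{ghchP}), I get $E_0 \cap T_oP^{n-1} = \CC C\zeta = T_o\Sigma_1$, then $E_{-4} \cap T_oP^{n-1} = \{0\}$, and finally $E_{-1} = T_oP^{n-1} \ominus T_o\Sigma_1 = \cp_{\alpha_1+\alpha_2} \oplus \cp_{\alpha_2}$. Thus $\calK_\zeta$ has eigenvalue $0$ on $T_o\Sigma_1$ and eigenvalue $-1$ on $\cp_{\alpha_1+\alpha_2}\oplus\cp_{\alpha_2}$.

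Finally, for condition (ii) I would compare this with the shape operator from Theorem \ref{ghchP}, whose principal curvature spaces are $T^\zeta_0 = T_o\Sigma_1$ (curvature $0$), $T^\zeta_1 \subset V(C)$ (curvature $1$) and $T^\zeta_{-1} \subset JV(C)$ (curvature $-1$), with $T^\zeta_1 \oplus T^\zeta_{-1} = \cp_{\alpha_1+\alpha_2}\oplus\cp_{\alpha_2}$. In the orthogonal decomposition $T_oP^{n-1} = T_o\Sigma_1 \oplus T^\zeta_1 \oplus T^\zeta_{-1}$ both operators act diagonally: $A_\zeta$ takes the values $0,1,-1$ and $\calK_\zeta$ the values $0,-1,-1$ on the three summands. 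Hence every eigenvector of $A_\zeta$ is an eigenvector of $\calK_\zeta$, so $\calK_\zeta A_\zeta = A_\zeta\calK_\zeta$ and (ii) follows. The only genuinely delicate point I expect is the bookkeeping of the real structure $C$: one must ensure that the $C$ realizing $\zeta$ as $\cA$-isotropic in the Jacobi-operator formulas is the same $C = \cos(\varphi)C_0 + \sin(\varphi)JC_0$ used to describe the principal curvature spaces, which is precisely why the independence of $\CC C\zeta$ from the choice of $C$ is the key auxiliary observation.
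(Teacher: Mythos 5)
Your proposal is correct and takes essentially the same route as the paper's proof: both reduce to a unit normal $\zeta \in \nu_oP^{n-1}$ at $o$, use that $\zeta$ is $\cA$-isotropic to invoke the eigenspace decomposition $E^\zeta_0 = \RR\zeta \oplus \CC C\zeta$, $E^\zeta_{-1} = \cp \ominus (\CC\zeta \oplus \CC C\zeta)$, $E^\zeta_{-4} = \RR J\zeta$ of the Jacobi operator from Section \ref{tchq}, and then match these against the principal curvature spaces $T^\zeta_0 = T_o\Sigma_1$ and $T^\zeta_1 \oplus T^\zeta_{-1} = E_{-1}$ from Theorem \ref{ghchP} to conclude that $\calK_\zeta$ and $A_\zeta$ are simultaneously diagonalizable. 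Your explicit check of condition (i) and your observation that the complex line $\CC C\zeta$ is independent of the choice of $C \in \cA_0$ are points the paper treats more tersely (it works directly with $C_0$, having noted earlier that $C_0(T_oB_1) = C(T_oB_1)$ for every real structure $C$), but the underlying argument is the same.
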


\begin{proof}
Let $\zeta$ be a unit normal vector of $P^{n-1}$ at $o$. Then $\zeta$ is an $\cA$-isotropic singular tangent vector of ${Q^n}^*$. We already computed the eigenvalues and eigenspaces of the Jacobi operator $\bar{R}_\zeta$ in Section \ref{tchq}. It follows from this that $\bar{R}_\zeta$ has three eigenvalues $0,-1,-4$ with corresponding eigenspaces
\[
E^\zeta_0  =  \RR \zeta \oplus \CC C_0\zeta\ ,\ 
E^\zeta_{-1}  = \cp \ominus (\CC \zeta \oplus \CC C_0\zeta)\ ,\ 
E^\zeta_{-4}  = \RR J\zeta.
\]
Note that $E^\zeta_{-1}$ is independent of the choice of the unit normal vector $\zeta$ and hence we can denote this space by $E_{-1}$.
The tangent space $T_oP^{n-1}$ is given by
\[
T_oP^{n-1} = C_0(\nu_oP^{n-1}) \oplus E_{-1}.
\]
From Theorem \ref{ghchP} we see that $T_0^\zeta = T_0 = C_0(\nu_oP^{n-1}) = \CC C_0\zeta \subset E^\zeta_0$ and $E_{-1} = T^\zeta_1 \oplus T^\zeta_{-1}$, which implies that $\calK_\zeta$ and $A_\zeta$ commute. Since this holds for all unit normal vectors $\zeta$, it follows that $P^{n-1}$ is curvature-adapted.
\end{proof}

\section{Tubes around the homogeneous complex hypersurface}
\label{thch}

In this section we discuss the geometry of the tubes around the homogeneous complex hypersurface $P^{n-1} \cong \CC H^{n-1}(-4)$ in $({Q^n}^*,g)$. We first observe that the tubes around $P^{n-1}$ are homogeneous real hypersurfaces in ${Q^n}^*$. In fact, the connected closed subgroup $H$ of $G = SO^0_{2,n}$ with Lie algebra
\[
\ch = \ck_{\alpha_1} \oplus \cd = \ck_{\alpha_1} \oplus \RR H_{\alpha_1+2\alpha_2} \oplus \cg_{\alpha_2} \oplus \cg_{\alpha_1+\alpha_2} \oplus \cg_{\alpha_1+2\alpha_2}
\]
acts on ${Q^n}^*$ with cohomogeneity one (see \cite{BD15}, Theorem 8). By construction, the orbit of $H$ containing $o$ is the homogeneous complex hypersurface $P^{n-1}$ and the principal orbits are the tubes around $P^{n-1}$. We denote by $P^{2n-1}_r$ the tube with radius $r \in \RR_+$ around $P^{n-1}$ in ${Q^n}^*$. Note that $P^{2n-1}_r$ is a homogeneous real hypersurface in ${Q^n}^*$ and hence $\dim_\RR(P^{2n-1}_r) = 2n-1$.

By Corollary \ref{Pcurvad}, the homogeneous complex hypersurface $P^{n-1}$ is curvature-adapted. Since tubes around curvature-adapted submanifolds in Riemannian symmetric spaces are again curvature-adapted (see \cite{G04}, Theorem 6.14, or \cite{BV92}, Theorem 6), Corollary \ref{Pcurvad} implies:

\begin{prop} 
The tube $P^{2n-1}_r$ with radius $r \in \RR_+$ around the homogeneous complex hypersurface $P^{n-1} \cong \CC H^{n-1}(-4)$ in $({Q^n}^*,g)$ is curvature-adapted.
\end{prop}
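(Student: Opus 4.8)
The plan is to obtain this proposition as an immediate consequence of Corollary \ref{Pcurvad} together with the fact that the tube construction in a Riemannian symmetric space preserves curvature-adaptedness. Concretely, Corollary \ref{Pcurvad} shows that the base submanifold $P^{n-1}$ is curvature-adapted, and $({Q^n}^*,g)$ is a Riemannian symmetric space, so its curvature tensor $\bar{R}$ is parallel. Under these two hypotheses the statement is exactly an instance of \cite{G04}, Theorem 6.14 (or \cite{BV92}, Theorem 6), and I would simply invoke that result.

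For completeness I would also sketch why that general principle holds in the present setting. Fix a point $o \in P^{n-1}$ and a unit normal vector $\zeta \in \nu_oP^{n-1}$, and let $\gamma$ be the geodesic with $\gamma(0)=o$ and $\dot\gamma(0)=\zeta$; parallel transport of $\zeta$ along $\gamma$ yields a unit normal field $\dot\gamma$ of the tube. Because $\bar{R}$ is parallel, the Jacobi operator $\bar{R}_{\dot\gamma}$ has constant eigenvalues and parallel eigenspaces along $\gamma$; in particular the splitting $T_o{Q^n}^* = T_oP^{n-1} \oplus \nu_oP^{n-1}$ is carried by parallel transport into a splitting respected by $\bar{R}_{\dot\gamma}$ at every parameter, which uses condition (i) of curvature-adaptedness at $o$. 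The shape operator $A(r)$ of the tube $P^{2n-1}_r$ at $\gamma(r)$ is obtained by solving the Jacobi (equivalently Riccati) equation along $\gamma$ with initial data given by $A_\zeta$ on $T_oP^{n-1}$ and the appropriate normal term; its solution is a function of $\bar{R}_{\dot\gamma}$ built from $\cos$, $\sin$, $\cosh$, $\sinh$ of $\sqrt{\mp\bar{R}_{\dot\gamma}}$, and therefore preserves the eigenspaces of $\bar{R}_{\dot\gamma}$.

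Since the normal Jacobi operator $\calK_{\dot\gamma}$ of the tube at $\gamma(r)$ is likewise the restriction of the same $\bar{R}_{\dot\gamma}$ to $T_{\gamma(r)}P^{2n-1}_r$, and since the commuting of $A_\zeta$ with $\calK_\zeta = \bar{R}_\zeta|_{T_oP^{n-1}}$ at the base (Corollary \ref{Pcurvad}) propagates through this eigenspace-preserving flow, we obtain $\calK_{\dot\gamma}A(r) = A(r)\calK_{\dot\gamma}$ at $\gamma(r)$. Homogeneity of $P^{2n-1}_r$ then extends this to every point and, after ranging over all unit normals, to every normal vector, which is precisely curvature-adaptedness. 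The only genuinely delicate point in a self-contained argument is the first step above, namely that the tangent--normal eigenspace structure of $\bar{R}_\zeta$ is transported coherently along the entire normal geodesic; but this is exactly what parallelism of $\bar{R}$ together with the curvature-adaptedness of $P^{n-1}$ deliver, and it is already packaged inside the cited theorems, so the proof reduces to their application.
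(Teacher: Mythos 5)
Your proposal is correct and takes essentially the same route as the paper: the paper's proof consists precisely of invoking Corollary \ref{Pcurvad} together with the fact that tubes around curvature-adapted submanifolds in Riemannian symmetric spaces are again curvature-adapted, citing \cite{G04}, Theorem 6.14, and \cite{BV92}, Theorem 6. Your supplementary Jacobi-field sketch of why that general principle holds is sound and anticipates the explicit computation of the tube's shape operator via the equation $A^r_{\zeta_r} = -D'(r)\circ D^{-1}(r)$ that the paper carries out immediately after this proposition.
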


We can therefore use Jacobi field theory to compute the principal curvatures and principal curvature spaces of $P^{2n-1}_r$  (see e.g.\ \cite{BCO16}, Section 10.2.3, for a detailed description of the methodology). Since $P^{2n-1}_r$ is a homogeneous real hypersurface in ${Q^n}^*$, it suffices to compute the principal curvatures and principal curvature spaces at one point. Let $\zeta \in \nu_oP^{n-1}$ be a unit normal vector and $\gamma : \RR \to {Q^n}^*$ the geodesic in ${Q^n}^*$ with $\gamma(0) = o$ and $\dot\gamma(0) = \zeta$. Then $p = \gamma(r) \in P^{2n-1}_r$ and $\zeta_r = \dot\gamma(r)$ is a unit normal vector of $P^{2n-1}_r$ at $o$. Since $\zeta$ is $\cA$-isotropic, also $\zeta_r$ is $\cA$-isotropic. Thus the normal bundle of $P^{2n-1}_r$ consists of $\cA$-isotropic singular tangent vectors of ${Q^n}^*$.

We denote by $\gamma^\perp$ the parallel subbundle of the tangent bundle of ${Q^n}^*$ along $\gamma$ that is defined by the orthogonal complements of $\RR\dot{\gamma}(t)$ in $T_{\gamma(t)}{Q^n}^*$, $t \in \RR$, and put 
\[
\bar{R}^\perp_\gamma = \bar{R}_\gamma|_{\gamma^\perp} = \bar{R}(\cdot,\dot{\gamma})\dot{\gamma}|_{\gamma^\perp}.
\]
Let $D$ be the ${\rm End}(\gamma^\perp)$-valued tensor field along $\gamma$ solving the Jacobi equation
\[ 
D^{\prime\prime} + \bar{R}^\perp_\gamma \circ D = 0,\ 
D(0) = \begin{pmatrix} \id_{T_oP^{n-1}} & 0 \\ 0 & 0 \end{pmatrix},\ 
D^\prime(0) = \begin{pmatrix} -A_\zeta & 0 \\ 0 & \id_{\RR J\zeta} \end{pmatrix} ,
\]
where the decomposition of the matrices is with respect to the decomposition $\gamma^\perp(0) = T_oP^{n-1} \oplus \RR J\zeta$ and $A_\zeta$ is the shape operator of $P^{n-1}$ with respect to $\zeta$.
If $v \in T_oP^{n-1}$ and $B_v$ is the parallel vector field along $\gamma$ with $B_v(0) = v$, then $Z_v = DB_v$ is the Jacobi field along $\gamma$ with initial values $Z_v(0) = v$ and $Z_v^\prime(0) = -A_\zeta v$. If $v \in \RR J\zeta$ and $B_v$ is the parallel vector field along $\gamma$ with $B_v(0) = v$, then $Z_v = DB_v$ is the Jacobi field along $\gamma$ with initial values $Z_v(0) = 0$ and $Z_v^\prime(0) = v$. We decompose $T_oP^{n-1}$ orthogonally into $T_oP^{n-1} = T^\zeta_0 \oplus T^\zeta_1 \oplus T^\zeta_{-1}$ (see Theorem \ref{ghchP}). 

Since $\zeta$ is $\cA$-isotropic, the Jacobi operator $\bar{R}^\perp_\gamma$ at $o$ is of matrix form
\[
\begin{pmatrix}
0 & 0 & 0 & 0\\
0 & -1 & 0 & 0 \\
0 & 0 & -1 & 0 \\
0 & 0 & 0 & -4
\end{pmatrix}
\]
with respect to the decomposition $T^\zeta_0 \oplus T^\zeta_1 \oplus T^\zeta_{-1} \oplus \RR J\zeta$. Since $({Q^n}^*,g)$ is a Riemannian symmetric space, the Jacobi operator $\bar{R}^\perp_\gamma$ is parallel along $\gamma$. By solving the above second order initial value problem explicitly we obtain
\[
D(r) = \begin{pmatrix} 
1 & 0 & 0 & 0 \\
0 & e^{-r} & 0 & 0 \\
0 & 0 & e^r & 0 \\
0 & 0 & 0 & \frac{1}{2}\sinh(2r)
\end{pmatrix}
\]
with respect to the parallel translate of the decomposition $T^\zeta_0 \oplus T^\zeta_1 \oplus T^\zeta_{-1} \oplus \RR J\zeta$ along $\gamma$ from $o$ to $\gamma(r)$.
The shape operator $A^r_{\zeta_r}$ of $P^{2n-1}_r$ with respect to the unit normal vector $\zeta_r = \dot{\gamma}(r)$ satisfies the equation
\[ 
A^r_{\zeta_r} = - D^\prime(r) \circ D^{-1}(r).
\]
The matrix representation of $A^r_{\zeta_r}$ with respect to the parallel translate of the decomposition $T^\zeta_0 \oplus T^\zeta_1 \oplus T^\zeta_{-1} \oplus \RR J\zeta$ along $\gamma$ from $o$ to $\gamma(r)$ therefore is
\[
A^r_{\zeta_r} = \begin{pmatrix} 
0 & 0 & 0 & 0 \\
0 & 1 & 0 & 0 \\
0 & 0 & -1 & 0 \\
0 & 0 & 0 & -2\coth(2r)
\end{pmatrix}.
\]
It is remarkable that the principal curvatures of the tubes $P^{2n-1}_r$, corresponding to the maximal complex subbundle $\calC$, are the same as those for the focal set $P^{n-1}$. The only additional principal curvature comes from the circles in $P^{2n-1}_r$ generated by the unit normal bundle of $P^{n-1}$, which in fact is the Hopf principal curvature function $\alpha$. We change the orientation of the unit normal vector field of $P^{2n-1}_r$ so that $\alpha$ becomes positive, that is, $\alpha = 2\coth(2r)$. 

Since the K\"{a}hler structure $J$ is parallel along $\gamma$, the condition $JT^\zeta_1 = T^\zeta_{-1}$ is preserved by parallel translation along $\gamma$. From this we easily see that the shape operator $A^r_{\zeta_r}$ of $P^{2n-1}_r$ satisfies $A^r_{\zeta_r}\phi + \phi A^r_{\zeta_r} = 0$. We summarize the previous discussion in the following result.

\begin{thm}
Let $P^{2n-1}_r$ be the tube with radius $r \in \RR_+$ around the homogeneous complex hypersurface $P^{n-1} \cong \CC H^{n-1}(-4)$ in $({Q^n}^*,g)$. The normal bundle of $P^{2n-1}_r$ consists of $\cA$-isotropic singular tangent vectors of $({Q^n}^*,g)$.
The homogeneous real hypersurface $P^{2n-1}_r$ has four distinct constant principal curvatures
\[
0,1,-1,2\coth(2r)
\]
with multiplicities $2,n-2,n-2,1$, respectively, with respect to a suitable orientation of the unit normal vector field $\zeta_r$ of $P^{2n-1}_r$. In particular, the mean curvature of $P^{2n-1}_r$ is equal to $2\coth(2r)$. The corresponding principal curvature spaces are
\[
T^{\zeta_r}_0 = \CC C\zeta_r = \calC \ominus \calQ\ ,\ T^{\zeta_r}_{2\coth(2r)} = \RR J\zeta_r,
\]
where $C$ is an arbitrary real structure on ${Q^n}^*$. The principal curvature spaces $T^{\zeta_r}_1$ and $T^{\zeta_r}_{-1}$ are mapped into each other by the complex structure $J$ (or equivalently, by the structure tensor field $\phi$) and are contained in the $\pm 1$-eigenspaces of a suitable real structure $C$.
Moreover, the shape operator $A^r$ and the structure tensor field $\phi$ of $P^{2n-1}_r$ satisfy
\[
A^r \phi + \phi A^r = 0.
\]
\end{thm}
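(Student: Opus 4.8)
The statement assembles the computations of the preceding discussion into its four parts, so the plan is to organize those facts and justify why each holds at every point of $P^{2n-1}_r$. Because $P^{2n-1}_r$ is a principal orbit of the cohomogeneity-one action of $H$, it is homogeneous, and so it suffices to verify all claims at the single point $p = \gamma(r)$, where $\gamma$ is the geodesic with $\gamma(0) = o$ and $\dot\gamma(0) = \zeta$ for a unit normal $\zeta \in \nu_o P^{n-1}$; then $\zeta_r = \dot\gamma(r)$ is a unit normal of $P^{2n-1}_r$ at $p$.

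The structural input that makes the computation tractable is that $P^{n-1}$ is curvature-adapted (Corollary \ref{Pcurvad}). This guarantees that along $\gamma$ the Jacobi operator $\bar R^\perp_\gamma$ and the initial shape operator $A_\zeta$ share the eigenspace decomposition $T^\zeta_0 \oplus T^\zeta_1 \oplus T^\zeta_{-1} \oplus \RR J\zeta$ of $\gamma^\perp(0)$, so the tensorial Jacobi equation decouples into scalar equations on the parallel translates of these four blocks. Solving them gives the diagonal endomorphism $D(r)$ recorded above, whence $A^r_{\zeta_r} = -D'(r)\circ D^{-1}(r)$ is diagonal with entries $0,1,-1,-2\coth(2r)$; after reversing the orientation of the normal field so that the Hopf curvature $\alpha = 2\coth(2r)$ is positive, the four principal curvatures $0,1,-1,2\coth(2r)$ appear with multiplicities $2, n-2, n-2, 1$, the dimensions of the four blocks. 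The trace then gives the mean curvature $(n-2)\cdot 1 + (n-2)(-1) + 2\coth(2r) = 2\coth(2r)$, and homogeneity makes everything constant on $P^{2n-1}_r$.

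For the remaining qualitative assertions I would transport the eigenspace descriptions of Theorem \ref{ghchP} along $\gamma$. Since $\cA$ is parallel, $\cA$-isotropy of $\zeta$ is preserved, so the whole normal bundle of $P^{2n-1}_r$ consists of $\cA$-isotropic singular tangent vectors; and since $J$ and a real structure $C$ are parallel along $\gamma$, the relations $T^\zeta_0 = \CC C\zeta$, $T^\zeta_1 \subset V(C)$, $T^\zeta_{-1} \subset JV(C)$ carry over to $p$, giving $T^{\zeta_r}_0 = \CC C\zeta_r = \calC \ominus \calQ$, $T^{\zeta_r}_{2\coth(2r)} = \RR J\zeta_r = \RR\xi$, and $T^{\zeta_r}_{\pm 1}$ lying in the $\pm 1$-eigenspaces of the parallel-translated real structure. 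Finally, parallelism of $J$ preserves $J T^\zeta_1 = T^\zeta_{-1}$, so $\phi$ interchanges the $\pm 1$-eigenspaces, fixes the $0$-eigenspace $\CC C\zeta_r$, and annihilates $\xi$; evaluating $A^r\phi + \phi A^r$ on each block then yields $0$.

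I expect the genuine content to lie not in any one calculation but in two conceptual points: first, invoking curvature-adaptedness to diagonalize the Jacobi problem simultaneously with the shape operator, without which the tube's principal directions would not align cleanly with $T^\zeta_0, T^\zeta_{\pm 1}, \RR J\zeta$; and second, verifying that the $+1$ and $-1$ eigenspaces remain paired by $J$ with opposite principal curvatures along the whole of $\gamma$. It is exactly this $J$-pairing, inherited from the complex hypersurface $P^{n-1}$ and stable under parallel translation, that forces $A^r\phi + \phi A^r = 0$ and thereby places each $P^{2n-1}_r$ in the class characterized in Proposition \ref{HopfCint}.
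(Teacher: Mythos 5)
Your proposal is correct and follows essentially the same route as the paper: reduction to a single point by homogeneity of the principal orbit, curvature-adaptedness of $P^{n-1}$ (Corollary \ref{Pcurvad}) to decouple the Jacobi equation along the decomposition $T^\zeta_0 \oplus T^\zeta_1 \oplus T^\zeta_{-1} \oplus \RR J\zeta$, the tube formula $A^r_{\zeta_r} = -D^\prime(r)\circ D^{-1}(r)$ with the orientation flip making $2\coth(2r)$ positive, and parallelism of $J$ and of the bundle $\cA$ along $\gamma$ to transport the eigenspace structure, the $\cA$-isotropy of the normal vectors, and the pairing $JT^\zeta_1 = T^\zeta_{-1}$ that forces $A^r\phi + \phi A^r = 0$. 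The only cosmetic refinement: rather than asserting that a chosen real structure $C$ is parallel along $\gamma$, one should say that the parallel translate of a real structure is again a real structure, which is precisely what the theorem's ``suitable real structure $C$'' accommodates.
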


To put this into the context of Theorem \ref{mainthm}, we define $M_\alpha^{2n-1} = P_r^{2n-1}$ with $\alpha = 2\coth(2r)$.
Recall that the normal space $\nu_oP^{n-1}$ is a Lie triple system and the totally geodesic submanifold $B_1$ of ${Q^n}^*$ generated by this Lie triple system is isometric to a complex hyperbolic line $\CC H^1(-4)$ of constant holomorphic sectional curvature $-4$. The same is true for all the other normal spaces of $P^{n-1}$. It follows that, by construction, the integral curves of the Reeb vector field $\xi = -J\zeta$ are circles of radius $r$ in a complex hyperbolic line of constant sectional curvature $-4$. Such a circle has constant geodesic curvature $\alpha = 2\coth(2r)$. We thus see that the integral curves of the Reeb vector field are circles with radius $r$ in a complex hyperbolic line $\CC H^1(-4)$. This clarifies the geometric construction discussed in the introduction.

\section{The minimal homogeneous Hopf hypersurface}
\label{mhHs}

In this section we construct the minimal homogeneous real hypersurface $M^{2n-1}_0$ in $({Q^n}^*,g)$. The construction is a special case of the canonical extension technique developed by the author and Tamaru in \cite{BT13}. 

We start by defining the reductive subalgebra
\[
\cl_1  = \cg_{-\alpha_1} \oplus \cg_0 \oplus \cg_{\alpha_1} \cong \cs\cu_{1,1} \oplus \RR \oplus \cs\co_{n-2}
\]
and the nilpotent subalgebra
\[
\cn_1  = \cg_{\alpha_2} \oplus \cg_{\alpha_1 + \alpha_2} \oplus \cg_{\alpha_1 + 2\alpha_2} \cong \ch^{2n-3}
\]
of $\cg = \cs\co_{2,n}$. Here, $\ch^{2n-3}$ is the $(2n-3)$-dimensional Heisenberg algebra with $1$-dimensional center. Note that $\cn_1$ already appeared in the construction of the homogeneous complex hypersurface $P^{n-1}$ in Section \ref{hcs} as part of the subalgebra $\cd = \RR H_{\alpha_1+2\alpha_2} \oplus \cn_1$.
We define
\begin{equation*}
\ca_1 = \ker(\alpha_1) = \RR H_{\alpha_1+2\alpha_2} ,\ \ca^1 = \RR H_{\alpha_1},
\end{equation*}
which gives an orthogonal decomposition of $\ca$ into $\ca = \ca_1\oplus \ca^1$.
The reductive subalgebra $\cl_1$ is the centralizer and the normalizer of $\ca_1$ in $\cg$. Since $[\cl_1,\cn_1] \subseteq \cn_1$,
\begin{equation*}
\cq_1 = \cl_1 \oplus \cn_1
\end{equation*}
is a subalgebra of $\cg$, the so-called parabolic subalgebra of $\cg$ associated with the simple root $\alpha_1$. The subalgebra $\cl_1 = \cq_1 \cap \theta(\cq_1)$ is a reductive Levi subalgebra of $\cq_1$ and $\cn_1$ is the unipotent radical of $\cq_1$. Therefore the decomposition $\cq_1 = \cl_1 \oplus \cn_1$ is a semidirect sum of the Lie algebras $\cl_1$ and $\cn_1$. The decomposition $\cq_1 = \cl_1 \oplus \cn_1$ is the Chevalley decomposition of the parabolic subalgebra $\cq_1$.

Next, we define a reductive subalgebra $\cm_1$ of $\cg$ by
\begin{equation*}
\cm_1  =   \cg_{-\alpha_1} \oplus \ca^1 \oplus \cg_{\alpha_1} \oplus \ck_0 \cong \cs\cu_{1,1} \oplus \cs\co_{n-2}.
\end{equation*}
The subalgebra $\cm_1$ normalizes $\ca_1 \oplus \cn_1$. The decomposition
\begin{equation*}
\cq_1 = \cm_1 \oplus \ca_1 \oplus \cn_1
\end{equation*}
is  the Langlands decomposition of the parabolic subalgebra $\cq_1$. We define a subalgebra $\ck_1$ of $\ck$ by
\begin{equation*}
\ck_1 = \cq_1 \cap \ck = \cl_1 \cap \ck = \cm_1 \cap \ck = \ck_{\alpha_1} \oplus \ck_0  \cong \cs\co_2 \oplus \cs\co_{n-2} .
\end{equation*}

Next, we define the semisimple subalgebra
\[
\cg_1 = \cg_{-\alpha_1} \oplus \ca^1 \oplus \cg_{\alpha_1} \cong \cs\cu_{1,1}.
\]
It is easy to see that the subspaces
\begin{equation*}
\ca \oplus \cp_{\alpha_1} = \cl_1\cap \cp  ,\ 
\ca^1 \oplus \cp_{\alpha_1} = \cm_1 \cap \cp = \cg_1 \cap \cp 
\end{equation*}
are Lie triple systems in ${\mathfrak p}$. Then $\cg_1 = \ck_{\alpha_1} \oplus (\ca^1 \oplus \cp_{\alpha_1})$ is a Cartan decomposition of the semisimple subalgebra $\cg_1$ of $\cg$
and $\ca^1$ is a maximal abelian subspace of
$\ca^1 \oplus \cp_{\alpha_1}$. Moreover,  $\cg_1 = (\ck_{\alpha_1} \oplus \ca^1) \oplus
\cg_{-\alpha_1} \oplus \cg_{\alpha_1}$ is the restricted root space decomposition of
$\cg_1$ with respect to $\ca^1$ and $\{\pm\alpha_1\}$
is the corresponding set of restricted roots. 

We now relate these algebraic constructions to the geometry of the
complex hyperbolic quadric ${Q^n}^*$. We denote by $A_1 \cong \RR$ the connected abelian subgroup of $G$ with Lie algebra $\ca_1$ and by $N_1 \cong H^{2n-3}$ the
connected nilpotent subgroup of $G$ with Lie algebra $\cn_1 \cong \ch^{2n-3}$. Here, $H^{2n-3}$ is the $(2n-3)$-dimensional Heisenberg group with $1$-dimensional center. The centralizer $L_1 = Z_G(\ca_1) \cong SU_{1,1} \times \RR \times SO_{n-2}$ of
$\ca_1$ in $G$ is a reductive subgroup of $G$ with Lie
algebra $\cl_1$. The subgroup $A_1$ is contained in
the center of $L_1$. The subgroup $L_1$ normalizes $N_1$
and $Q_1 = L_1 N_1$ is a subgroup of $G$ with Lie algebra
$\cq_1$. The subgroup $Q_1$ coincides with the
normalizer $N_G(\cl_1 \oplus \cn_1)$ of
$\cl_1 \oplus \cn_1$ in $G$, and hence
$Q_1$ is a closed subgroup of $G$. The subgroup $Q_1$ is the
parabolic subgroup of $G$ associated with the simple root $\alpha_1$.

Let $G_1 \cong SU_{1,1}$ be the connected subgroup of $G$ with Lie algebra
$\cg_1 \cong \cs\cu_{1,1}$. The intersection $K_1$
of $L_1$ and $K$, i.e. $K_1 = L_1 \cap K \cong SO_2 \times SO_{n-2}$, is a maximal
compact subgroup of $L_1$ and $\ck_1$ is the Lie
algebra of $K_1$. The adjoint group $\Ad(L_1)$ normalizes
$\cg_1$, and consequently $M_1 = K_1 G_1 \cong SU_{1,1}  \times SO_{n-2}$ is a
subgroup of $L_1$. The Lie algebra of $M_1$ is
$\cm_1$ and $L_1$ is isomorphic to the Lie group
direct product $M_1 \times A_1$, i.e. $L_1 = M_1 \times
A_1 \cong (SU_{1,1}  \times SO_{n-2}) \times \RR$. The parabolic subgroup $Q_1$ acts transitively on ${Q^n}^*$ and the isotropy subgroup at $o$ is $K_1$, that is, ${Q^n}^* \cong
Q_1/K_1$.

Since $\cg_1 = \ck_{\alpha_1} \oplus (\ca^1 \oplus \cp_{\alpha_1})$ is a Cartan decomposition of the semisimple subalgebra $\cg_1$, we have $[\ca^1 \oplus \cp_{\alpha_1},\ca^1 \oplus \cp_{\alpha_1}] = \ck_{\alpha_1}$. Thus $G_1 \cong SU_{1,1}$ is the connected closed subgroup of $G$ with Lie algebra $[\ca^1 \oplus \cp_{\alpha_1},\ca^1 \oplus \cp_{\alpha_1}] \oplus
(\ca^1 \oplus \cp_{\alpha_1})$. Since $\ca^1 \oplus \cp_{\alpha_1}$ is a Lie triple
system in $\cp$, the orbit $B_1 = G_1 \cdot o$ of
the $G_1$-action on ${Q^n}^*$ containing $o$ is a connected totally
geodesic submanifold of ${Q^n}^*$ with $T_oB_1 = \ca^1 \oplus \cp_{\alpha_1}$. Moreover, $B_1$ is a Riemannian symmetric space of non-compact
type and rank $1$, and
\begin{equation*}
B_1 = G_1 \cdot o = G_1/(G_1\cap K_1) \cong SU_{1,1}/SO_2 \cong \CC H^1(-4),
\end{equation*}
where $\CC H^1(-4)$ is a complex hyperbolic line of constant (holomorphic) sectional curvaturer $-4$. The submanifold $B_1$ is a boundary component of ${Q^n}^*$ in the context of the maximal Satake compactification of  ${Q^n}^*$. This boundary component coincides with the totally geodesic submanifold $B_1$ that we constructed in Section \ref{hcs}.

Clearly, $\ca_1$ is a Lie triple system and
the corresponding totally geodesic submanifold is a Euclidean line 
$\RR = A_1 \cdot o$.
Since the action of $A_1$ on $M$ is free and $A_1$ is simply
connected, we can identify $\RR$ and $A_1$ canonically. 

Finally, $\cf_1 = \ca \oplus \cp_{\alpha_1}$ is a Lie triple system and the corresponding totally
geodesic submanifold $F_1$ is the symmetric space
\begin{equation*}
F_1 = L_1 \cdot o = L_1/K_1= (M_1 \times
A_1)/K_1 = B_1 \times \RR \cong \CC H^1(-4) \times \RR.
\end{equation*}

The submanifolds $F_1$ and $B_1$ have a natural geometric
interpretation. Denote by $\bar{C}^+(\Lambda) \subset \ca$
the closed positive Weyl chamber that is determined by the two simple
roots $\alpha_1$ and $\alpha_2$. Let $Z$ be non-zero vector in $\bar{C}^+(\Lambda)$
such that $\alpha_1(Z) = 0$ and $\alpha_2(Z) >
0$, and consider the
geodesic $\gamma_Z(t) = \Exp(tZ) \cdot o$ in ${Q^n}^*$ with
$\gamma_Z(0) = o$ and $\dot{\gamma}_Z(0) = Z$. The totally geodesic
submanifold $F_1$ is the union of all geodesics in ${Q^n}^*$ 
parallel to $\gamma_Z$, and $B_1$ is the semisimple part of
$F_1$ in the de Rham decomposition of $F_1$ (see e.g.\
\cite{Eb96}, Proposition 2.11.4  and Proposition 2.20.10).

The parabolic group $Q_1$ is diffeomorphic to the product $M_1 \times
A_1 \times N_1$. This analytic diffeomorphism induces an
analytic diffeomorphism between 
\[
B_1 \times \RR \times N_1 \cong \CC H^1(-4) \times \RR \times H^{2n-3}
\] 
and ${Q^n}^*$, giving a horospherical decomposition of the complex hyperbolic quadric ${Q^n}^*$,
\[
\CC H^1(-4) \times \RR \times H^{2n-3} \cong {Q^n}^*.
\]
The factor $\RR \times H^{2n-3}$ corresponds to the homogeneous complex hypersurface $P^{n-1} \cong \CC H^{n-1}(-4)$ that we discussed in Section \ref{hcs}.

We have $\RR H_{\alpha_1} = \ca^1 \subset \cg_1$ and $G_1 \cdot o = B_1$. It follows from Theorem \ref{ghchP} that $\ca^1$ consists of $\cA$-isotropic tangent vectors of ${Q^n}^*$. Let $A^1 \cong \RR$ be the abelian subalgebra of $\ca$ with Lie algebra $\ca^1$. Then the orbit $A^1 \cdot o$ is the path of an $\cA$-isotropic geodesic $\gamma$ (determined by the root vector $H_{\alpha_1}$) in the complex hyperbolic quadric ${Q^n}^*$. Moreover, by construction, this geodesic is contained in the boundary component $B_1 \cong \CC H^1(-4)$. The action of $A^1$ on $\CC H^1(-4)$ is of cohomogeneity one. The orbit containing $o$ is the geodesic $\gamma$, and the other orbits are the equidistant curves to $\gamma$.

The canonical extension of the cohomogeneity one action of $A^1$ on the boundary component $B_1 \cong \CC H^1(-4)$ is defined as follows. We first define the solvable subalgebra
\begin{align*}
\cs_1 & = \ca^1 \oplus \ca_1 \oplus \cn_1 = \ca \oplus \cn_1 \\
& = \left\{  
\begin{pmatrix}
0 & y & a_1 & -y & v_1 & \cdots &  v_{n-2}\\
-y & 0 & y & a_2 & w_1 & \cdots &  w_{n-2}  \\
a_1 & y & 0 & -y & v_1 & \cdots &  v_{n-2}\\
-y & a_2 & y & 0 & w_1 & \cdots &  w_{n-2} \\
v_1 & w_1 & -v_1 & -w_1 & 0 & \cdots & 0 \\
\vdots & \vdots & \vdots & \vdots & \vdots & \ddots & \vdots  \\
v_{n-2} & w_{n-2} & -v_{n-2} & -w_{n-2} & 0 & \cdots & 0 
\end{pmatrix} : 
\begin{array}{l} a_1,a_2,y \in \RR,\\ v,w \in \RR^{n-2} \end{array} \right\}
\end{align*}
of $\ca \oplus \cn$. Let $S_1$ be the connected solvable subgroup of $AN$ with Lie algebra $\cs_1$. Then the action of $S_1$ on $AN$ (resp.\ ${Q^n}^*$) is of cohomogeneity one (see \cite{BT13}). By construction, all orbits of the $S_1$-action on $AN$ (resp.\ ${Q^n}^*$) are homogeneous real hypersurfaces in $(AN,\langle \cdot , \cdot \rangle)$ (resp.\ $({Q^n}^*,g)$). Let $\hat{M}^{2n-1}_0$ (resp.\ $M^{2n-1}_0$) be the orbit containing the point $o$. Geometrically, we can describe this orbit as the canonical extension of an $\cA$-isotropic geodesic in the boundary component $B_1 \cong \CC H^1(-4)$.

We will now compute the shape operator of the homogeneous real hypersurface $\hat{M}^{2n-1}_0$ in $(AN,\langle \cdot , \cdot \rangle)$. Since $\hat{M}^{2n-1}_0$ is homogeneous, it suffices to make the computations at the point $o$.
We define
\[
\hat\zeta = 
\begin{pmatrix} 
0 & 1 & 0 & 1 & 0 & \cdots & 0 \\
-1 & 0 & 1 & 0 & 0 & \cdots & 0 \\
0 & 1 & 0 & 1 & 0 & \cdots & 0 \\
1 & 0 & -1 & 0 &  0 & \cdots & 0 \\
0 & 0 & 0 & 0 & 0 & \cdots & 0 \\
\vdots & \vdots & \vdots & \vdots & \vdots & \ddots & \vdots\\
0 & 0 & 0 & 0 & 0 & \cdots & 0
\end{pmatrix}
\in \cg_{\alpha_1} \subset \cn.
\]
Then 
\[
\theta(\hat\zeta) =  
\begin{pmatrix} 
0 & 1 & 0 & -1 & 0 & \cdots & 0 \\
-1 & 0 & -1 & 0 & 0 & \cdots & 0 \\
0 & -1 & 0 & 1 & 0 & \cdots & 0 \\
-1 & 0 & -1 & 0 &  0 & \cdots & 0 \\
0 & 0 & 0 & 0 & 0 & \cdots & 0 \\
\vdots & \vdots & \vdots & \vdots & \vdots & \ddots & \vdots\\
0 & 0 & 0 & 0 & 0 & \cdots & 0
\end{pmatrix}
\in \cg_{-\alpha_1}
\]
and 
\begin{align*}
\langle \hat\zeta , \hat\zeta \rangle & = - \frac{1}{8}\tr(\hat\zeta\theta(\hat\zeta)) = 1.
\end{align*}
Thus $\hat\zeta$ is a unit normal vector of $\hat{M}^{2n-1}_0$ at $o$.
Let $\hat{A}$ be the shape operator of $\hat{M}^{2n-1}_0$ in $(AN,\langle \cdot , \cdot \rangle)$ with respect to $\hat{\zeta}$. As in Section \ref{hcs}, using arguments involving the Weingarten and Koszul formulas, we can show that 
\[
\hat{A} \hat{X} = [\zeta,\hat{X}]_{\cs_1}
\]
for all $\hat{X} \in \cs_1$, where $\zeta = \frac{1}{2}(\hat\zeta - \theta(\hat\zeta))$ is the orthogonal projection of $\hat{\zeta}$ onto $\cp$ and $[\, \cdot\ ]_{\cs_1}$ is the orthogonal projection onto $\cs_1$.

We have
\[
\zeta = \frac{1}{2}(\hat\zeta - \theta(\hat\zeta)) = 
\begin{pmatrix} 
0 & 0 & 0 & 1 & 0 & \cdots & 0 \\
0 & 0 & 1 & 0 & 0 & \cdots & 0 \\
0 & 1 & 0 & 0 & 0 & \cdots & 0 \\
1 & 0 & 0 & 0 &  0 & \cdots & 0 \\
0 & 0 & 0 & 0 & 0 & \cdots & 0 \\
\vdots & \vdots & \vdots & \vdots & \vdots & \ddots & \vdots\\
0 & 0 & 0 & 0 & 0 & \cdots & 0
\end{pmatrix}
\in \cp_{\alpha_1}.
\]
For
\[
\hat{X} = \begin{pmatrix}
0 & y & a_1 & -y & v_1 & \cdots &  v_{n-2}\\
-y & 0 & y & a_2 & w_1 & \cdots &  w_{n-2}  \\
a_1 & y & 0 & -y & v_1 & \cdots &  v_{n-2}\\
-y & a_2 & y & 0 & w_1 & \cdots &  w_{n-2} \\
v_1 & w_1 & -v_1 & -w_1 & 0 & \cdots & 0  \\
\vdots & \vdots & \vdots & \vdots & \vdots & \ddots & \vdots  \\
v_{n-2} & w_{n-2} & -v_{n-2} & -w_{n-2} & 0 & \cdots & 0 
\end{pmatrix} \in \cs_1
\]
we then compute
\[
[\zeta,\hat{X}] = 
 \begin{pmatrix}
0 & a_2-a_1 & 0 & 0 & w_1 & \cdots &  w_{n-2} \\
a_1-a_2 & 0 & 0 & 0 & v_1 & \cdots &  v_{n-2} \\
0 & 0 & 0 & a_2-a_1 & w_1 & \cdots &  w_{n-2}\\
0 & 0 & a_1-a_2 & 0 & v_1 & \cdots &  v_{n-2} \\
w_1 & v_1 & -w_1 & -v_1 & 0 & \cdots & 0 \\
\vdots & \vdots & \vdots & \vdots & \vdots & \ddots & \vdots  \\
w_{n-2} & v_{n-2} & -w_{n-2} & -v_{n-2} & 0 & \cdots & 0   
\end{pmatrix}.
\]
The orthogonal projection of $[\zeta,\hat{X}]$ onto $\cs_1$ is
\[
[\zeta,\hat{X}]_{\cs_1} = 
 \begin{pmatrix}
0 & 0 & 0 & 0 & w_1 & \cdots &  w_{n-2} \\
0 & 0 & 0 & 0 & v_1 & \cdots &  v_{n-2} \\
0 & 0 & 0 & 0 & w_1 & \cdots &  w_{n-2}\\
0 & 0 & 0 & 0 & v_1 & \cdots &  v_{n-2} \\
w_1 & v_1 & -w_1 & -v_1 & 0 & \cdots &  0\\
\vdots & \vdots & \vdots & \vdots & \vdots & \ddots &  \vdots \\
w_{n-2} & v_{n-2} & -w_{n-2} & -v_{n-2} & 0 & \cdots & 0   
\end{pmatrix}.
\]	
We conclude that the shape operator $\hat{A}$ of $\hat{M}^{2n-1}_0$ in $(AN,\langle \cdot , \cdot \rangle)$ with respect to $\hat{\zeta}$ is given by
\[
\hat{A} \hat{X} = 
 \begin{pmatrix}
0 & 0 & 0 & 0 & w_1 & \cdots &  w_{n-2} \\
0 & 0 & 0 & 0 & v_1 & \cdots &  v_{n-2} \\
0 & 0 & 0 & 0 & w_1 & \cdots &  w_{n-2}\\
0 & 0 & 0 & 0 & v_1 & \cdots &  v_{n-2} \\
w_1 & v_1 & -w_1 & -v_1 & 0 & \cdots  & 0\\
\vdots & \vdots & \vdots & \vdots & \vdots & \ddots & \vdots  \\
w_{n-2} & v_{n-2} & -w_{n-2} & -v_{n-2} & 0 & \cdots & 0 
\end{pmatrix}
\]
with
\[
\hat{X} = \begin{pmatrix}
0 & y & a_1 & -y & v_1 & \cdots &  v_{n-2}\\
-y & 0 & y & a_2 & w_1 & \cdots &  w_{n-2}  \\
a_1 & y & 0 & -y & v_1 & \cdots &  v_{n-2}\\
-y & a_2 & y & 0 & w_1 & \cdots &  w_{n-2} \\
v_1 & w_1 & -v_1 & -w_1 & 0 & \cdots & 0  \\
\vdots & \vdots & \vdots & \vdots & \vdots & \ddots & \vdots  \\
v_{n-2} & w_{n-2} & -v_{n-2} & -w_{n-2} & 0 & \cdots & 0
\end{pmatrix} \in \cs_1.
\]
From this we deduce that $0$ is a principal curvature of $\hat{M}^{2n-1}_0$ with multiplicity $3$ and  corresponding principal curvature space
\begin{align*}
\hat{T}_0 &  = \ca \oplus \cg_{\alpha_1\oplus 2\alpha_2}.
\end{align*}
On the orthogonal complement $\cg_{\alpha_1+\alpha_2} \oplus \cg_{\alpha_2}$ the shape operator is of the form
\[
\hat{A} = \begin{pmatrix} 
0 & 1 \\ 1 & 0
\end{pmatrix}
\]
with respect to the orthogonal decomposition $\cg_{\alpha_1+\alpha_2} \oplus \cg_{\alpha_2}$. The characteristic polynomial of this matrix is $x^2 - 1$, and hence the eigenvalues of $\hat{A}$ restricted to $\cg_{\alpha_1+\alpha_2} \oplus \cg_{\alpha_2}$ are $1$ and $-1$. The corresponding eigenspaces are
\[
\hat{T}_1 = \left\{ \begin{pmatrix}
0 & 0 & 0 & 0 & u_1 & \cdots &  u_{n-2}\\
0 & 0 & 0 & 0 & u_1 & \cdots &  u_{n-2}  \\
0 & 0 & 0 & 0 & u_1 & \cdots &  u_{n-2}\\
0 & 0 & 0 & 0 & u_1 & \cdots &  u_{n-2} \\
u_1 & u_1 & -u_1 & -u_1 & 0 & \cdots &  0 \\
\vdots & \vdots & \vdots & \vdots & \vdots & \ddots & \vdots \\
u_{n-2} & u_{n-2} & -u_{n-2} & -u_{n-2} & 0 & \cdots & 0
\end{pmatrix} \right\} \cong \RR^{n-2}
\]
and
\[
\hat{T}_{-1} = \left\{ \begin{pmatrix}
0 & 0 & 0 & 0 & u_1 & \cdots &  u_{n-2}\\
0 & 0 & 0 & 0 & -u_1 & \cdots &  -u_{n-2}  \\
0 & 0 & 0 & 0 & u_1 & \cdots &  u_{n-2}\\
0 & 0 & 0 & 0 & -u_1 & \cdots &  -u_{n-2} \\
u_1 & -u_1 & -u_1 & u_1 & 0 & \cdots & 0 \\
\vdots & \vdots & \vdots & \vdots & \vdots & \ddots & \vdots  \\
u_{n-2} & -u_{n-2} & -u_{n-2} & u_{n-2} & 0 & \cdots & 0
\end{pmatrix} \right\} \cong \RR^{n-2}.
\]

All of the above calculations are with respect to the metric $\langle \cdot , \cdot \rangle$ on $AN$. We now switch to the Riemannian metric $g$ on ${Q^n}^*$ and the Cartan decomposition $\cg = \ck \oplus \cp$. Recall that, by construction, $(AN,\langle \cdot , \cdot \rangle)$ and $({Q^n}^*,g)$ are isometric and the metrics are related by
\[
\langle H_1 + \hat{X}_1 , H_2 + \hat{X}_2 \rangle  
= g(H_1,H_2) + g(X_1,X_2)
\]
with $H_1,H_2 \in \ca$ and $\hat{X}_1,\hat{X}_2 \in \cn$.

Since $\hat{\zeta}$ is a unit vector in $\cg_{\alpha_1}$, the vector $\zeta = \frac{1}{2}(\hat\zeta - \theta(\hat\zeta))$ is a unit vector in $\cp_{\alpha_1}$. Since $\cp_{\alpha_1} \subset T_oB_1$ and all (non-zero) tangent vectors of the boundary component $B_1$ are $\cA$-isotropic (see Theorem \ref{ghchP}), we conclude that the normal bundle of $M^{2n-1}_0$ consists of $\cA$-isotropic singular tangent vectors of $({Q^n}^*,g)$.

Let $A$ be the shape operator of $M^{2n-1}_0$ in $({Q^n}^*,g)$ with respect to $\zeta$. The above calculations  imply that
\[
A X = 
 \begin{pmatrix}
0 & 0 & 0 & 0 & w_1 & \cdots & w_{n-2} \\
0 & 0 & 0 & 0 & v_1 & \cdots  & v_{n-2} \\
0 & 0 & 0 & 0 & 0 & \cdots &  0 \\
0 & 0 & 0 & 0 & 0 & \cdots &  0 \\
w_1 & v_1 & 0 & 0 & 0 & \cdots & 0 \\
\vdots & \vdots & \vdots & \vdots & \vdots & \ddots &  \vdots \\
w_{n-2} & v_{n-2} & 0 & 0 & 0 & \cdots & 0 
\end{pmatrix}
\]
with
\[
X = \begin{pmatrix}
0 & 0 & a_1 & -y & v_1 & \cdots  & v_{n-2}\\
0 & 0 & y & a_2 & w_1 & \cdots &  w_{n-2}  \\
a_1 & y & 0 & 0 &  0 & \cdots &  0 \\
-y & a_2 & 0 & 0 & 0 & \cdots & 0  \\
v_1 & w_1 & 0 & 0 & 0 & \cdots & 0  \\
\vdots & \vdots & \vdots & \vdots & \vdots & \ddots & \vdots  \\
v_{n-2} & w_{n-2} & 0 & 0 & 0 & \cdots & 0 
\end{pmatrix} \in T_oM^{2n-1}_0 \subset \cp.
\]
From this we easily deduce the following result.

\begin{thm} \label{gEso}
Let $M^{2n-1}_0$ be the homogeneous real hypersurface in $({Q^n}^*,g)$ obtained by canonical extension of the geodesic that is tangent to the root vector $H_{\alpha_1}$ in the boundary component $B_1 \cong \CC H^1(-4)$ of $({Q^n}^*,g)$. The normal bundle of $M^{2n-1}_0$ consists of $\cA$-isotropic singular tangent vectors of $({Q^n}^*,g)$ and $M^{2n-1}_0$ has three distinct constant principal curvatures $0$, $1$, $-1$ with multiplicities $3$, $n-2$, $n-2$, respectively. The principal curvature spaces $T_0$, $T_1$ and $T_{-1}$ are
\begin{align*}
T_0 &= \ca \oplus \cp_{\alpha_1+2\alpha_2} = \RR J\zeta \oplus (\calC \ominus \calQ),\\
T_1 & = \{ X-JX : X \in \cp_{\alpha_2}\} = \{ X+JX : X \in \cp_{\alpha_1+\alpha_2}\},\\
T_{-1} & = \{ X+JX : X \in \cp_{\alpha_2}\} = \{ X-JX : X \in \cp_{\alpha_1+\alpha_2}\}.
\end{align*}
We have $T_1 \oplus T_{-1} = \calQ$ and $JT_1 = T_{-1}$.
The shape operator $A$ of $M^{2n-1}_0$ satisfies
\[
A\phi + \phi A = 0.
\]
\end{thm}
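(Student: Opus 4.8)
The plan is to read everything off the explicit matrix of the shape operator $A$ of $M^{2n-1}_0$ with respect to $g$ that was computed just above the statement, and then to rewrite the resulting data in the invariant language of the theorem. The normal-bundle claim is already in hand: since $\zeta \in \cp_{\alpha_1} \subset T_oB_1$ and, by Theorem \ref{ghchP}, every non-zero tangent vector of the boundary component $B_1 \cong \CC H^1(-4)$ is $\cA$-isotropic, the unit normal $\zeta$ (and by homogeneity the whole normal bundle) consists of $\cA$-isotropic singular tangent vectors. Reading the matrix of $A$, one sees that $A$ annihilates the $a_1,a_2,y$-directions, so $\ker A = \ca \oplus \cp_{\alpha_1+2\alpha_2}$ has dimension $3$, while on the complement $\cp_{\alpha_1+\alpha_2} \oplus \cp_{\alpha_2}$ the operator interchanges the two copies of $\RR^{n-2}$ (it sends the $\cp_{\alpha_1+\alpha_2}$-coefficient $v$ into the $\cp_{\alpha_2}$-slot, and vice versa). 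Hence on this complement $A$ acts as $\left(\begin{smallmatrix} 0 & 1 \\ 1 & 0 \end{smallmatrix}\right)$ in block form, with eigenvalues $\pm1$ of multiplicity $n-2$ each and eigenspaces $T_1 = \{v=w\}$, $T_{-1} = \{v=-w\}$. This yields the three principal curvatures $0,1,-1$ with multiplicities $3,n-2,n-2$.

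The second step is to translate these eigenspaces into the $J$-description. For this I would record the action of the K\"ahler structure $J$ on the relevant root spaces; concretely, $J$ acts on the $(2\times n)$-block $B$ of an element of $\cp$ by left multiplication with the rotation $\left(\begin{smallmatrix} 0 & -1 \\ 1 & 0\end{smallmatrix}\right)$. A direct check then gives $J\cp_{\alpha_1+\alpha_2} = \cp_{\alpha_2}$ with coefficient map $c \mapsto c$ and $J\cp_{\alpha_2} = \cp_{\alpha_1+\alpha_2}$ with $c \mapsto -c$ (consistent with $J^2 = -\id$). Consequently the $+1$-eigenspace $\{v=w\}$ is exactly $\{X + JX : X \in \cp_{\alpha_1+\alpha_2}\} = \{X - JX : X \in \cp_{\alpha_2}\}$, and the $-1$-eigenspace is $\{X - JX : X \in \cp_{\alpha_1+\alpha_2}\} = \{X + JX : X \in \cp_{\alpha_2}\}$, which are the stated formulas. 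From $J^2 = -\id$ it is immediate that $JT_1 = T_{-1}$, and since $T_1 \oplus T_{-1} = \cp_{\alpha_1+\alpha_2} \oplus \cp_{\alpha_2} = \calQ$, the description of $\calQ$ follows.

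For the $0$-eigenspace I would compute $J\zeta$ explicitly: applying the rotation to the block of $\zeta \in \cp_{\alpha_1}$ gives $J\zeta = -\tfrac12 H_{\alpha_1} \in \ca$, so $\RR J\zeta = \RR H_{\alpha_1} = \RR\xi$ and in particular $A\xi = 0$, which already shows $M^{2n-1}_0$ is a Hopf hypersurface. Since $\calC = T_oM^{2n-1}_0 \ominus \RR\xi$ and $\calQ = \cp_{\alpha_1+\alpha_2}\oplus\cp_{\alpha_2}$, we obtain $\calC \ominus \calQ = \RR H_{\alpha_1+2\alpha_2} \oplus \cp_{\alpha_1+2\alpha_2}$, whence $\RR J\zeta \oplus (\calC\ominus\calQ) = \ca \oplus \cp_{\alpha_1+2\alpha_2} = \ker A = T_0$, as claimed.

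Finally, for $A\phi + \phi A = 0$ I would argue eigenspace by eigenspace, using $\phi = J$ on $\calC$ and $\phi\xi = 0$. On $\calQ = T_1 \oplus T_{-1}$ the structure tensor $\phi = J$ interchanges the eigenspaces $T_{\pm1}$, so for $X = X_1 + X_{-1}$ one has $\phi A X = \phi(X_1 - X_{-1})$ while $A\phi X = -\phi X_1 + \phi X_{-1}$, and the two cancel. On $T_0 = \RR\xi \oplus (\calC\ominus\calQ)$ one has $\phi\xi = 0$ and, since $\calC\ominus\calQ$ is $J$-invariant, $\phi(T_0) \subseteq T_0 = \ker A$, so both $A\phi$ and $\phi A$ vanish there. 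This proves the relation (and, via Proposition \ref{HopfCint}, re-confirms that $\calC$ is integrable). I do not expect a genuine obstacle here; the only point requiring care is the bookkeeping linking the $(AN,\langle\cdot,\cdot\rangle)$-computation of $\hat A$ to the $\cp$-picture through the projection $\hat X \mapsto \tfrac12(\hat X - \theta(\hat X))$, together with the precise root-space action of $J$ — once these are pinned down, every assertion of the theorem is a direct read-off.
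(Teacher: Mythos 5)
Your proposal is correct and takes essentially the same route as the paper: there, the theorem is stated as a direct read-off (``From this we easily deduce'') of the explicit matrix of $A$ obtained from $\hat{A}\hat{X} = [\zeta,\hat{X}]_{\cs_1}$ in the $(AN,\langle\cdot,\cdot\rangle)$-model followed by projection to $\cp$, which is exactly your plan. The details you supply --- the left-multiplication action of $J$ on the $(2\times n)$-block giving $J\cp_{\alpha_1+\alpha_2}=\cp_{\alpha_2}$, the identity $J\zeta = -\tfrac12 H_{\alpha_1}$, and the eigenspace-by-eigenspace verification of $A\phi+\phi A=0$ --- are precisely the omitted ``easy'' steps, and they all check out.
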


Note that
\[
T_1 \subset V\left( \frac{1}{\sqrt{2}}(C_0 + JC_0) \right),\  T_{-1} \subset JV\left( \frac{1}{\sqrt{2}}(C_0 + JC_0) \right).
\]

We immediately see from Theorem \ref{gEso} that $\tr(A) = 0$.

\begin{cor} 
The homogeneous Hopf hypersurface $M^{2n-1}_0$ in $({Q^n}^*,g)$ is minimal.
\end{cor}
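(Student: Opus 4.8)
The plan is to reduce the statement to the vanishing of the mean curvature: a real hypersurface in a Riemannian manifold is minimal precisely when the trace $\tr(A)$ of its shape operator $A$ vanishes. Since $M^{2n-1}_0$ is homogeneous, $A$ has constant eigenvalue data, so it suffices to evaluate $\tr(A)$ at the single point $o$.

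The required data is supplied in full by Theorem \ref{gEso}, which lists the principal curvatures of $M^{2n-1}_0$ as $0$, $1$, $-1$ with multiplicities $3$, $n-2$, $n-2$. As a consistency check I would first note that $3 + (n-2) + (n-2) = 2n-1 = \dim_\RR M^{2n-1}_0$, so these eigenspaces exhaust the tangent space and no principal curvature has been overlooked. The trace is then the multiplicity-weighted sum of the principal curvatures,
\[
\tr(A) = 3 \cdot 0 + (n-2) \cdot 1 + (n-2) \cdot (-1) = 0,
\]
and hence $M^{2n-1}_0$ is minimal.

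It is worth isolating the conceptual reason for the cancellation, which also explains why minimality is special to the parameter value $\alpha = 0$. By Theorem \ref{gEso} (or Proposition \ref{HopfCint}) the shape operator satisfies $A\phi + \phi A = 0$. On the maximal complex subbundle $\calC$, where $\phi$ restricts to a complex structure, this anticommutation relation pairs each eigenvalue $\mu$ of $A|_\calC$ with the eigenvalue $-\mu$ via the isomorphism $\phi$, forcing $\tr(A|_\calC) = 0$. Since $\xi = -J\zeta$ lies in the principal curvature space $T_0$, the Hopf principal curvature of $M^{2n-1}_0$ is $0$, so the Reeb direction contributes nothing and $\tr(A) = \tr(A|_\calC) + g(A\xi,\xi) = 0$. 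There is no genuine obstacle here: once Theorem \ref{gEso} is available the corollary is an immediate arithmetic consequence, the only minor point being the verification that the listed principal curvature spaces fill out $T_oM^{2n-1}_0$.
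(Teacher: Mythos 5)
Your proof is correct and follows the same route as the paper, which likewise deduces minimality immediately from $\tr(A)=0$ using the principal curvature data $0,1,-1$ with multiplicities $3,n-2,n-2$ in Theorem \ref{gEso}. Your supplementary observation that $A\phi+\phi A=0$ forces $\tr(A|_\calC)=0$ while $\xi\in T_0$ kills the Reeb contribution is a valid and nicely explanatory refinement, but not a different argument.
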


The eigenspaces $T_0$, $T_1$ and $T_{-1}$ of the shape operator $A$ and the eigenspaces $E_0$, $E_{-1}$ and $E_{-4}$ of the normal Jacobi operator $\calK_\zeta$ satisfy
\[
T_0 = E_0 \oplus E_{-4}\ ,\ T_{-1} \oplus T_1 = E_{-1}.
\]
It follows that $A$ and $\calK = \calK_\zeta$ are simultaneously diagonalizable and hence $A\calK = \calK A$. This implies that $M^{2n-1}_0$ is curvature-adapted.

\begin{cor} \label{gEca}
The homogeneous Hopf hypersurface $M^{2n-1}_0$ in $({Q^n}^*,g)$ is curvature-adapted.
\end{cor}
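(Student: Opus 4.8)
The plan is to verify the two defining conditions of a curvature-adapted submanifold at the single point $o$, which suffices because $M^{2n-1}_0$ is homogeneous: the verification at $o$ is carried to every point of $M^{2n-1}_0$ by the isometric $S_1$-action. Let $\zeta$ be the unit normal vector at $o$ fixed in Theorem \ref{gEso}. Since the normal bundle of $M^{2n-1}_0$ consists of $\cA$-isotropic singular tangent vectors, $\zeta$ is $\cA$-isotropic, so I can read off the eigenvalues and eigenspaces of the Jacobi operator $\bar{R}_\zeta$ from the $t = \frac{\pi}{4}$ case computed in Section \ref{tchq}: the eigenvalues are $0,-1,-4$ with eigenspaces
\[
E_0 = \RR\zeta \oplus \CC C\zeta, \quad E_{-1} = \cp \ominus (\CC\zeta \oplus \CC C\zeta), \quad E_{-4} = \RR J\zeta,
\]
where $C$ is a suitable real structure in $\cA_0$.

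First I would check condition (i), that is, $\bar{R}_\zeta(T_oM^{2n-1}_0) \subseteq T_oM^{2n-1}_0$. The key observation is that the normal line $\RR\zeta$ lies inside the null eigenspace $E_0$, so the orthogonal splitting $T_o{Q^n}^* = \RR\zeta \oplus T_oM^{2n-1}_0$ refines to
\[
T_oM^{2n-1}_0 = \CC C\zeta \oplus E_{-1} \oplus E_{-4},
\]
in which every summand is an eigenspace of $\bar{R}_\zeta$ (with $\CC C\zeta \subseteq E_0$). Since $\bar{R}_\zeta$ maps each of its eigenspaces into itself and all three summands lie in $T_oM^{2n-1}_0$, condition (i) follows at once; consequently the normal Jacobi operator $\calK_\zeta = \bar{R}_\zeta|_{T_oM^{2n-1}_0}$ is well defined and carries the three eigenvalues $0,-1,-4$ on $\CC C\zeta$, $E_{-1}$, $E_{-4}$ respectively.

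For condition (ii) I would compare this decomposition with the principal curvature decomposition of Theorem \ref{gEso}. The identifications I expect to use are
\[
T_0 = \RR J\zeta \oplus (\calC \ominus \calQ) = E_{-4} \oplus \CC C\zeta, \quad T_1 \oplus T_{-1} = \calQ = E_{-1},
\]
obtained by matching the explicit root-space descriptions of the principal curvature spaces against the eigenspaces above. Granting them, a simultaneous eigendecomposition for $A$ and $\calK_\zeta$ is immediate: on $\CC C\zeta$ both operators vanish; on $\RR J\zeta = E_{-4}$ the shape operator $A$ vanishes while $\calK_\zeta$ acts as $-4\,\id$; and on $E_{-1} = T_1 \oplus T_{-1}$ the operator $\calK_\zeta$ acts as $-\id$ while $A$ has eigenspaces $T_1$ and $T_{-1}$. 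Thus every eigenspace of $\calK_\zeta$ is $A$-invariant, whence $A\calK_\zeta = \calK_\zeta A$ and $M^{2n-1}_0$ is curvature-adapted.

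Since all the ingredients are already fully explicit, there is no genuine analytic obstacle; the work is entirely in the bookkeeping behind the two displayed identifications. The one point that deserves care is that $\calK_\zeta$ is \emph{not} a scalar on the three-dimensional space $T_0$ (it is $0$ on $\CC C\zeta$ but $-4$ on $\RR J\zeta$), so the commuting property must be argued through the $A$-invariance of each $\calK_\zeta$-eigenspace rather than through the $\calK_\zeta$-invariance of each $A$-eigenspace; the splitting $T_0 = E_{-4} \oplus \CC C\zeta$ is precisely what makes this work.
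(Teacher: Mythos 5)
Your proposal is correct and follows essentially the same route as the paper: the paper likewise identifies $T_0 = E_0 \oplus E_{-4}$ (with $E_0 = \CC C\zeta$ the kernel of $\calK_\zeta$ on the tangent space) and $T_{-1} \oplus T_1 = E_{-1}$, and concludes that $A$ and $\calK_\zeta$ are simultaneously diagonalizable and hence commute, your explicit treatment of condition (i) and of homogeneity merely spelling out what the paper leaves implicit. One small remark: your closing caveat is slightly overstated, since each $A$-eigenspace here is in fact also $\calK_\zeta$-invariant ($T_0$ being the direct sum of the two $\calK_\zeta$-eigenspaces $\CC C\zeta$ and $\RR J\zeta$), so the invariance argument goes through in either direction.
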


We finally relate this construction to the discussion in the introduction. The subalgebra 
\[
\cs_1 = \ca \oplus \cn_1 = \ca \oplus  \cg_{\alpha_2} \oplus \cg_{\alpha_1 + \alpha_2} \oplus \cg_{\alpha_1 + 2\alpha_2}
\]
of $\ca \oplus \cn$ contains the subalgebra
\[
\cd =  \RR H_{\alpha_1+2\alpha_2} \oplus \cg_{\alpha_2} \oplus \cg_{\alpha_1+\alpha_2} \oplus \cg_{\alpha_1+2\alpha_2}.
\] 
The subalgebra $\cd$ induces the homogeneous complex hypersurface $\hat{P}^{n-1} \cong \CC H^{n-1}(-4)$, as discussed in Section \ref{hcs}. Since the construction is left-invariant, it follows that the homogeneous real hypersurface $\hat{M}^{2n-1}_0$ in $(AN,\langle \cdot , \cdot \rangle)$ is foliated by isometric copies of the homogeneous complex hypersurface $\hat{P}^{n-1} \cong \CC H^{n-1}(-4)$. This implies that the homogeneous complex hypersurface $M^{2n-1}_0$ in $({Q^n}^*,g)$ is foliated by isometric copies of the homogeneous complex hypersurface $P^{n-1} \cong \CC H^{n-1}(-4)$. The normal space $\nu_oP^{n-1}$ is a Lie triple system and the totally geodesic submanifold $B_1$ of ${Q^n}^*$ generated by this Lie triple system is a complex hyperbolic line $\CC H^1(-4)$ of constant holomorphic sectional curvature $-4$. The same is true for all the normal spaces of $P^{n-1}$ at other points. It follows that, by construction, the integral curves of the Reeb vector field $\xi = -J\zeta$ are geodesics in a complex hyperbolic line of constant (holomorphic) sectional curvature $-4$. Such a geodesic has constant geodesic curvature $0$. This clarifies the geometric construction explained in the introduction.

\section{Equidistant real hypersurfaces}
\label{eqdihyp}

In this section we compute the shape operator of the other orbits of the cohomogeneity one action on $({Q^n}^*,g)$ that we discussed in Section \ref{mhHs}. Recall that $M^{2n-1}_0$ is the orbit of this action containing $o$. Since the action is isometric, the other orbits are the equidistant real hypersurfaces to $M^{2n-1}_0$. For $r \in \RR_+$ we denote by $M^{2n-1}_\alpha$ the equidistant real hypersurface to $M^{2n-1}_0$ at oriented distance $r \in \RR_+$, where we put $\alpha = 2\tanh(2r)$.

From Corollary \ref{gEca} we know that $M^{2n-1}_0$ is a curvature-adapted real hypersurface in ${Q^n}^*$.  We can therefore use Jacobi field theory to compute the principal curvatures and principal curvature spaces of  $M^{2n-1}_\alpha$ (see e.g.\ \cite{BCO16}, Section 10.2.2). Since $M^{2n-1}_\alpha$ is a homogeneous real hypersurface in ${Q^n}^*$, it suffices to compute the principal curvatures and principal curvature spaces at one point. Let $\zeta \in \nu_oM^{2n-1}_0$ be the unit normal vector of $M^{2n-1}_0$ as defined in Section \ref{mhHs} and $A_\zeta$ be the shape operator of $M^{2n-1}_0$ at $o$ with respect to $\zeta$. We denote by $T_0$, $T_1$ and $T_{-1}$ the principal curvature spaces as in Theorem \ref{gEso}. Let $\gamma : \RR \to {Q^n}^*$ be the geodesic in ${Q^n}^*$ with $\gamma(0) = o$ and $\dot\gamma(0) = \zeta$. Then $p = \gamma(r) \in M^{2n-1}_\alpha$ and $\zeta_r = \dot\gamma(r)$ is a unit normal vector of $M^{2n-1}_\alpha$ at $p$. We denote by $\gamma^\perp$ the parallel subbundle of the tangent bundle of ${Q^n}^*$ along $\gamma$ that is defined by the orthogonal complements of $\RR\dot{\gamma}(t)$ in $T_{\gamma(t)}{Q^n}^*$, and put 
\[
\bar{R}^\perp_\gamma = \bar{R}_\gamma|_{\gamma^\perp} = \bar{R}(\cdot,\dot{\gamma})\dot{\gamma}|_{\gamma^\perp}.
\]
Let $D$ be the ${\rm End}(\gamma^\perp)$-valued tensor field along $\gamma$ solving the Jacobi equation
\[ 
D^{\prime\prime} + \bar{R}^\perp_\gamma \circ D = 0 ,\ 
D(0) =  \id_{T_oM^{2n-1}_0} ,\ 
D^\prime(0) =  -A_\zeta.
\]
If $v \in T_oM^{2n-1}_0$ and $B_v$ is the parallel vector field along $\gamma$ with $B_v(0) = v$, then $Y = DB_v$ is the Jacobi field along $\gamma$ with initial values $Y(0) = v$ and $Y^\prime(0) = -A_\zeta v$.

Since $\zeta$ is $\cA$-isotropic, the Jacobi operator $\bar{R}^\perp_\gamma$ at $o$ is of matrix form
\[
\begin{pmatrix}
0 & 0 & 0 & 0\\
0 & -1 & 0 & 0 \\
0 & 0 & -1 & 0 \\
0 & 0 & 0 & -4
\end{pmatrix}
\]
with respect to the decomposition $\CC C\zeta \oplus T_1 \oplus T_{-1} \oplus \RR J\zeta$. Note that $T_0 = \CC C\zeta \oplus  \RR J\zeta$. Since $({Q^n}^*,g)$ is a Riemannian symmetric space, the Jacobi operator $\bar{R}^\perp_\gamma$ is parallel along $\gamma$. By solving the above second order initial value problem explicitly we obtain
\[
D(r) = \begin{pmatrix} 
1 & 0 & 0 & 0 \\
0 & e^{-r} & 0 & 0 \\
0 & 0 & e^r & 0 \\
0 & 0 & 0 & \cosh(2r)
\end{pmatrix}
\]
with respect to the parallel translate of the decomposition $T_0 \oplus T_1 \oplus T_{-1} \oplus \RR J\zeta$ along $\gamma$ from $o$ to $\gamma(r)$.
The shape operator $A^\alpha_{\zeta_r}$ of $M^{2n-1}_\alpha$ with respect to $\zeta_r = \dot{\gamma}(r)$ satisfies the equation
\[ 
A^\alpha_{\zeta_r} = - D^\prime(r) \circ D^{-1}(r).
\]
The matrix representation of $A^\alpha_{\zeta_r}$ with respect to the parallel translate of the decomposition $T_0 \oplus T_1 \oplus T_{-1} \oplus \RR J\zeta$ along $\gamma$ from $o$ to $\gamma(r)$ therefore is
\[
A^\alpha_{\zeta_r} = \begin{pmatrix} 
0 & 0 & 0 & 0 \\
0 & 1 & 0 & 0 \\
0 & 0 & -1 & 0 \\
0 & 0 & 0 & -2\tanh(2r)
\end{pmatrix}.
\]
It is remarkable that the principal curvatures of the equidistant real hypersurfaces to $M^{2n-1}_0$ are preserved along the parallel translate of the maximal complex subspace $\calC_o \subset T_oM_0^{2n-1}$. The only additional principal curvature arises in direction of the Reeb vector field, which is the Hopf principal curvature. We change the orientation of the unit normal vector field $\zeta_r$ so that the Hopf principal curvature is positive, that is, is equal to $\alpha$. Thus we have proved:

\begin{thm} 
Let $M^{2n-1}_0$ be the minimal homogeneous Hopf hypersurface in $({Q^n}^*,g)$ as in Section \ref{mhHs} and $M^{2n-1}_\alpha$ be the equidistant real hypersurface at oriented distance $r \in \RR_+$ from $M^{2n-1}_0$, where $\alpha = 2\tanh(2r)$. Then $M^{2n-1}_\alpha$ is a homogeneous Hopf hypersurface with four distinct constant principal curvatures $0$, $1$, $-1$, $2\tanh(2r)$ with multiplicities $2$, $n-2$, $n-2$, $1$, respectively. 
The principal curvature spaces $T_0$, $T_1$, $T_{-1}$, $T_{2\tanh(2r)}$ satisfy
\begin{align*}
T_0 &= \calC \ominus \calQ,\\
T_{2\tanh(2r)} & = \RR J\zeta_r = \calC^\perp,\\
T_1 \oplus T_{-1} & = \calQ \mbox{ and } JT_1 = T_{-1}.
\end{align*}
Moreover, the shape operator $A^\alpha$ and the structure tensor field $\phi$ of $M^{2n-1}_\alpha$ satisfy
\[
A^\alpha \phi + \phi A^\alpha = 0.
\]
\end{thm}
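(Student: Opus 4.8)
The plan is to propagate the second fundamental form of $M^{2n-1}_0$ outward along its normal geodesics by Jacobi field theory, using crucially that $M^{2n-1}_0$ is curvature-adapted (Corollary \ref{gEca}). Since $M^{2n-1}_\alpha$ is homogeneous it suffices to compute at one point. First I would fix the unit normal $\zeta \in \nu_o M^{2n-1}_0$ of Theorem \ref{gEso}, let $\gamma$ be the geodesic with $\gamma(0)=o$ and $\dot\gamma(0)=\zeta$, and record that $\zeta$ is $\cA$-isotropic; because the subbundle $\cA$ is parallel, $\zeta_r=\dot\gamma(r)$ stays $\cA$-isotropic, so the normal Jacobi operator $\bar{R}^\perp_\gamma$ keeps the constant eigenvalues $0,-1,-1,-4$ all along $\gamma$, with parallel-translated eigenspaces $\CC C\zeta$, $T_1$, $T_{-1}$ and $\RR J\zeta$.

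The decisive input is curvature-adaptedness. Because $A_\zeta$ and the normal Jacobi operator $\calK_\zeta$ commute on $T_oM^{2n-1}_0$, the four-fold decomposition $\gamma^\perp(0) = \CC C\zeta \oplus T_1 \oplus T_{-1} \oplus \RR J\zeta$ simultaneously diagonalizes both operators, so the matrix Jacobi equation $D''+\bar{R}^\perp_\gamma\circ D=0$ decouples into independent scalar equations on each summand. Solving these with the initial data $D(0)=\id$ and $D'(0)=-A_\zeta$ read off from Theorem \ref{gEso} gives the diagonal tensor $D(r)$ with entries $1, e^{-r}, e^{r}, \cosh(2r)$; then $A^\alpha_{\zeta_r}=-D'(r)\circ D^{-1}(r)$ is immediate and has eigenvalues $0,1,-1,-2\tanh(2r)$ with multiplicities $2,n-2,n-2,1$.

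Reading off the geometry, the line $\RR J\zeta$ carries the single new (Hopf) principal curvature $-2\tanh(2r)$; reversing the orientation of $\zeta_r$ turns this into $\alpha=2\tanh(2r)$ and exhibits $M^{2n-1}_\alpha$ as Hopf. The parallel translate of $\CC C\zeta$ stays in $\ker A^\alpha$ and equals $\calC\ominus\calQ$, the sum $T_1\oplus T_{-1}$ spans $\calQ$, and $\RR J\zeta_r=\calC^\perp$. For the identity $A^\alpha\phi+\phi A^\alpha=0$ I would use that $J$ is parallel (the metric is K\"{a}hler), so parallel translation commutes with $J$; hence the relation $JT_1=T_{-1}$ valid at $o$ (Theorem \ref{gEso}) persists at $\gamma(r)$, and on $\calC$ the structure tensor $\phi$ acts as $J$. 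Since $A^\alpha$ equals $+1$ on $T_1$, equals $-1$ on $T_{-1}=JT_1$, annihilates $\calC\ominus\calQ$, and fixes the Reeb line, a summand-by-summand check yields $A^\alpha\phi+\phi A^\alpha=0$.

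The one genuinely non-mechanical step is the decoupling: without curvature-adaptedness the Jacobi equation is a coupled matrix ODE with no closed form. The content of Corollary \ref{gEca} is precisely that $A_\zeta$, $\calK_\zeta$, and (through $J$-invariance) $\phi$ share a common eigendecomposition, reducing everything to scalar ODEs. Once that is in place the remaining work --- solving the scalar equations, forming $-D'(r)\circ D^{-1}(r)$, and matching the eigenspaces against $\calC$, $\calQ$ and $J$ --- is routine.
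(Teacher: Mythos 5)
Your proposal is correct and follows essentially the same route as the paper's own proof: both use the curvature-adaptedness of $M^{2n-1}_0$ to diagonalize the Jacobi equation along the decomposition $\CC C\zeta \oplus T_1 \oplus T_{-1} \oplus \RR J\zeta$ (with $\bar{R}^\perp_\gamma$ parallel since ${Q^n}^*$ is symmetric), solve for $D(r)$ with diagonal entries $1, e^{-r}, e^{r}, \cosh(2r)$, form $A^\alpha_{\zeta_r} = -D'(r)\circ D^{-1}(r)$, and reorient $\zeta_r$ to make the Hopf curvature $\alpha = 2\tanh(2r)$. Your closing argument for $A^\alpha\phi + \phi A^\alpha = 0$ --- parallelism of $J$ preserving $JT_1 = T_{-1}$ along $\gamma$ --- is exactly the mechanism the paper invokes in the analogous tube computation of Section \ref{thch}.
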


The principal curvature spaces $T_{2\tanh(2r)}$, $T_0$, $T_1$ and $T_{-1}$ of the shape operator $A^\alpha$ and the eigenspaces $E_0$, $E_{-1}$ and $E_{-4}$ of the normal Jacobi operator $\calK^\alpha = \calK_{\zeta_r}$ satisfy
\[
T_{2\tanh(2r)} = E_{-4}\ ,\ T_0 = E_0 \ ,\ T_{-1} \oplus T_1 = E_{-1}.
\]
It follows that $A^\alpha$ and $\calK^\alpha$ are simultaneously diagonalizable and hence $A^\alpha \calK^\alpha = \calK^\alpha A^\alpha$. This implies:

\begin{cor} 
The equidistant real hypersurface $M^{2n-1}_\alpha$, $0 < \alpha < 2$, to the minimal homogeneous Hopf hypersurface $M^{2n-1}_0$ in $({Q^n}^*,g)$ is curvature-adapted.
\end{cor}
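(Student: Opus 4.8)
The plan is to verify directly the two defining conditions of curvature-adaptedness for $M^{2n-1}_\alpha$, exploiting its homogeneity. Since $M^{2n-1}_\alpha$ is a homogeneous real hypersurface in ${Q^n}^*$, it suffices to check both conditions at the single point $p = \gamma(r)$ with the unit normal vector $\zeta_r = \dot\gamma(r)$, which we already know to be $\cA$-isotropic.

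First I would dispose of condition (i), namely $\bar{R}_{\zeta_r}(T_pM^{2n-1}_\alpha) \subseteq T_pM^{2n-1}_\alpha$. Since the normal space of the real hypersurface at $p$ is the line $\RR\zeta_r$ and $\bar{R}_{\zeta_r}\zeta_r = \bar{R}(\zeta_r,\zeta_r)\zeta_r = 0$, the line $\RR\zeta_r$ is contained in the kernel of $\bar{R}_{\zeta_r}$ and is in particular $\bar{R}_{\zeta_r}$-invariant. Because $\bar{R}_{\zeta_r}$ is self-adjoint, its orthogonal complement $T_pM^{2n-1}_\alpha = (\RR\zeta_r)^\perp$ is invariant as well, so (i) holds automatically for any hypersurface. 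Alternatively, one can read this off from the explicit eigenspace decomposition $E_0 \oplus E_{-1} \oplus E_{-4}$ of $\bar{R}_{\zeta_r}$ for an $\cA$-isotropic normal computed in Section \ref{tchq}, noting that $\zeta_r \in E_0$.

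The core of the argument is condition (ii), for which I would invoke the eigenspace matching
\[
T_{2\tanh(2r)} = E_{-4},\quad T_0 = E_0,\quad T_{-1} \oplus T_1 = E_{-1}
\]
established immediately before the statement. On each eigenspace $E_\lambda$ of the normal Jacobi operator $\calK^\alpha = \calK_{\zeta_r}$ the operator $\calK^\alpha$ acts as the scalar $\lambda$. The shape operator $A^\alpha$ leaves each of these eigenspaces invariant: it restricts to $0$ on $E_0 = T_0$ and to $2\tanh(2r)\,\id$ on $E_{-4} = T_{2\tanh(2r)}$, while on $E_{-1} = T_1 \oplus T_{-1}$ it has the two eigenvalues $\pm 1$ with eigenspaces $T_1$ and $T_{-1}$, so it preserves $E_{-1}$. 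Since $\calK^\alpha$ is a scalar multiple of the identity on each $E_\lambda$, it commutes with the restriction of $A^\alpha$ to $E_\lambda$; as $A^\alpha$ leaves each $E_\lambda$ invariant, we conclude $\calK^\alpha A^\alpha = A^\alpha \calK^\alpha$ on all of $T_pM^{2n-1}_\alpha$, which is (ii). Hence $A^\alpha$ and $\calK^\alpha$ are simultaneously diagonalizable and $M^{2n-1}_\alpha$ is curvature-adapted.

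I do not anticipate a genuine obstacle here: the substantive work has already been carried out in computing the principal curvature spaces $T_0, T_1, T_{-1}, T_{2\tanh(2r)}$ and the eigenspaces $E_0, E_{-1}, E_{-4}$ of the Jacobi operator, and the corollary is a clean consequence of the fact that every eigenspace of $\calK^\alpha$ splits into eigenspaces of $A^\alpha$. The only point requiring a moment of care is condition (i), but for a hypersurface this is forced by the self-adjointness of $\bar{R}_{\zeta_r}$ together with $\bar{R}_{\zeta_r}\zeta_r = 0$.
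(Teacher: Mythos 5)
Your proposal is correct and follows essentially the same route as the paper: the paper proves the corollary precisely by noting the eigenspace matching $T_{2\tanh(2r)} = E_{-4}$, $T_0 = E_0$, $T_{-1}\oplus T_1 = E_{-1}$ and concluding that $A^\alpha$ and $\calK^\alpha$ are simultaneously diagonalizable, hence commute. Your only addition is the explicit verification of condition (i) via $\bar{R}_{\zeta_r}\zeta_r = 0$ and self-adjointness, which the paper leaves implicit since it is automatic for hypersurfaces; this is a harmless and correct supplement.
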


By construction, the integral curves of the Reeb vector field on $M^{2n-1}_\alpha$ are congruent to an equidistant curve at distance $r = \frac{1}{2}\tanh^{-1}(\frac{\alpha}{2})$ to a geodesic in a complex hyperbolic line $\CC H^1(-4)$. Such an equidistant curve has constant geodesic curvature $2\tanh(2r)$. As in previous cases, this leads to the geometric interpretation of $M^{2n-1}_\alpha$ given by attaching copies of the homogeneous complex hypersurface $P^{n-1} \cong \CC H^{n-1}(-4)$ to such an equidistant curve to a geodesic in the boundary component $B_1 \cong \CC H^1(-4)$. Equivalently, $M^{2n-1}_\alpha$ is the canonical extension of an equidistant curve at distance $r = \frac{1}{2}\tanh^{-1}(\frac{\alpha}{2})$ to a geodesic in the boundary component $B_1 \cong \CC H^1(-4)$.

\section{The homogeneous Hopf hypersurface of horocyclic type}
\label{canexthoro}

In this section we discuss the canonical extension of a horocycle in the boundary component $B_1 \cong \CC H^1(-4)$, which leads to the homogeneous real hypersurface $M^{2n-1}_2$ in Theorem \ref{mainthm}. We first define the solvable subalgebra 
\[
\ch_1 = \ca_1 \oplus \cn
 = 
\left\{ 
\begin{pmatrix}
0 & x+y & a & x-y & v_1 & \cdots & v_{n-2} \\
-x-y & 0 & x+y & a & w_1 & \cdots & w_{n-2} \\
a & x+y & 0 & x-y & v_1 & \cdots & v_{n-2} \\
x-y & a & -x+y & 0 & w_1 & \cdots & w_{n-2} \\
v_1 & w_1 & -v_1 & -w_1 & 0 & \cdots & 0 \\
\vdots & \vdots & \vdots & \vdots & \vdots & \ddots & \vdots\\
v_{n-2} & w_{n-2} & -v_{n-2} & -w_{n-2} & 0 & \cdots & 0
\end{pmatrix} : 
\begin{array}{l} a,x,y \in \RR,\\ 
v,w \in \RR^{n-2} \end{array}
\right\}
\]
of $\ca \oplus \cn$. Recall that $\ca^1 \oplus \cg_{\alpha_1} = \RR H_{\alpha_1} \oplus \cg_{\alpha_1}$ generates the boundary component $B_1 \cong \CC H^1(-4)$. The orbit containing $o$ of the $1$-dimensional Lie group generated by $\cg_{\alpha_1}$ is a horocycle in the boundary component $B_1$. Since the tangent vectors of $B_1$ are $\cA$-isotropic, the horocycle is $\cA$-isotropic. The canonical extension of this cohomogeneity one action on $B_1$ is the cohomogeneity one action on ${Q^n}^*$ by the subgroup $H_1$ of $AN$ with Lie algebra $\ch_1$. Let $\hat{M}^{2n-1}_2 = H_1 \cdot o \cong H_1$ be the orbit of the $H_1$-action on $(AN,\langle \cdot , \cdot \rangle)$ containing $o$ and $M^{2n-1}_2 = H_1 \cdot o$ be the orbit of the $H_1$-action on $({Q^n}^*,g)$ containing $o$. 

The normal space $\nu_o\hat{M}^{2n-1}_2$ of $\hat{M}^{2n-1}_2$ at $o$ is
\[
\nu_o\hat{M}^{2n-1}_2 = \ca^1 = \RR H_{\alpha_1}.
\]
Since $\langle H_{\alpha_1} , H_{\alpha_1} \rangle = 4$, the vector
$\hat\zeta = \frac{1}{2}H_{\alpha_1} \in \ca$ 
is a unit normal vector of $\hat{M}^{2n-1}_2$ at $o$. Let $\hat{A}$ be the shape operator of $\hat{M}^{2n-1}_2$ in $(AN,\langle \cdot , \cdot \rangle)$ with respect to $\hat{\zeta}$. As in previous sections, we can show that the shape operater $\hat{A}$ of $\hat{M}^{2n-1}_2$ is given by
\[
\hat{A} \hat{X} = [\zeta,\hat{X}]_{\ch_1}
\]
for all $\hat{X} \in \ch_1$,
where \[
\zeta = \frac{1}{2}(\hat\zeta - \theta(\hat\zeta)) = \hat\zeta = \frac{1}{2}H_{\alpha_1}
\] 
and $[\, \cdot\ ]_{\ch_1}$ is the orthogonal projection onto $\ch_1$.

For
\[
\hat{X} = \begin{pmatrix}
0 & x+y & a & x-y & v_1 & \cdots & v_{n-2} \\
-x-y & 0 & x+y & a & w_1 & \cdots & w_{n-2} \\
a & x+y & 0 & x-y & v_1 & \cdots & v_{n-2} \\
x-y & a & -x+y & 0 & w_1 & \cdots & w_{n-2} \\
v_1 & w_1 & -v_1 & -w_1 & 0 & \cdots & 0 \\
\vdots & \vdots & \vdots & \vdots & \vdots & \ddots & \vdots\\
v_{n-2} & w_{n-2} & -v_{n-2} & -w_{n-2} & 0 & \cdots & 0
\end{pmatrix} \in \ch_1
\]
we then compute
\[
[\zeta,\hat{X}]  =  \begin{pmatrix}
0 & 2x & 0 & 2x & v_1 & \cdots &  v_{n-2} \\
-2x & 0 & 2x & 0 & -w_1 & \cdots &  -w_{n-2} \\
0 & 2x & 0 & 2x & v_1 & \cdots &  v_{n-2}\\
2x & 0 & -2x & 0 & -w_1 & \cdots &  -w_{n-2} \\
v_1 & -w_1 & -v_1 & w_1 & 0 & \cdots & 0\\
\vdots & \vdots & \vdots & \vdots & \vdots & \ddots & \vdots  \\
v_{n-2} & -w_{n-2} & -v_{n-2} & w_{n-2} & 0 & \cdots & 0  
\end{pmatrix}.
\]
Since the last matrix is in $\ch_1$, the orthogonal projection of $[\zeta,\hat{X}]$ onto $\ch_1$ is $[\zeta,\hat{X}]$.
We conclude that the shape operator $\hat{A}$ of $\hat{M}^{2n-1}_2$ in $(AN,\langle \cdot , \cdot \rangle)$ is given by
\[
\hat{A} \hat{X} = 
\begin{pmatrix}
0 & 2x & 0 & 2x & v_1 & \cdots &  v_{n-2} \\
-2x & 0 & 2x & 0 & -w_1 & \cdots &  -w_{n-2} \\
0 & 2x & 0 & 2x & v_1 & \cdots &  v_{n-2}\\
2x & 0 & -2x & 0 & -w_1 & \cdots &  -w_{n-2} \\
v_1 & -w_1 & -v_1 & w_1 & 0 & \cdots & 0\\
\vdots & \vdots & \vdots & \vdots & \vdots & \ddots & \vdots  \\
v_{n-2} & -w_{n-2} & -v_{n-2} & w_{n-2} & 0 & \cdots & 0
\end{pmatrix}
\]
with
\[
\hat{X} = \begin{pmatrix}
0 & x+y & a & x-y & v_1 & \cdots & v_{n-2} \\
-x-y & 0 & x+y & a & w_1 & \cdots & w_{n-2} \\
a & x+y & 0 & x-y & v_1 & \cdots & v_{n-2} \\
x-y & a & -x+y & 0 & w_1 & \cdots & w_{n-2} \\
v_1 & w_1 & -v_1 & -w_1 & 0 & \cdots & 0 \\
\vdots & \vdots & \vdots & \vdots & \vdots & \ddots & \vdots\\
v_{n-2} & w_{n-2} & -v_{n-2} & -w_{n-2} & 0 & \cdots & 0
\end{pmatrix} \in \ch_1.
\]

From this we deduce that the principal curvatures of $\hat{M}^{2n-1}_2$ are $2$, $0$, $1$, $-1$ with corresponding multiplicities $1$, $2$, $n-2$, $n-2$, respectively. The corresponding principal curvature spaces are
\[
\hat{T}_2   = \cg_{\alpha_1}\ ,\ 
\hat{T}_0  = \ca_1 \oplus \cg_{\alpha_1\oplus 2\alpha_2} \ ,\ 
\hat{T}_1  = \cg_{\alpha_1+\alpha_2}\ ,\ 
\hat{T}_{-1}  = \cg_{\alpha_2}.
\]

All of the above calculations are with respect to the metric $\langle \cdot , \cdot \rangle$ on $AN$. We now switch to the Riemannian metric $g$ on ${Q^n}^*$ and the Cartan decomposition $\cg = \ck \oplus \cp$. Recall that, by construction, $(AN,\langle \cdot , \cdot \rangle)$ and $({Q^n}^*,g)$ are isometric and the metrics are related by
\[
\langle H_1 + \hat{X}_1 , H_2 + \hat{X}_2 \rangle  
= g(H_1,H_2) + g(X_1,X_2)
\]
with $H_1,H_2 \in \ca$ and $\hat{X}_1,\hat{X}_2 \in \cn$.

Let $A$ be the shape operator of $M^{2n-1}_2$ in $({Q^n}^*,g)$ with respect to $\zeta$. The above calculations then imply
\[
A X = 
 \begin{pmatrix}
0 & 0 & 0 & 2x & v_1 & \cdots &  v_{n-2} \\
0 & 0 & 2x & 0 & -w_1 & \cdots &  -w_{n-2} \\
0 & 2x & 0 & 0 &  0 & \cdots & 0\\
2x & 0 & 0 & 0 &  0 & \cdots & 0 \\
v_1 & -w_1 & 0 & 0 & 0 & \cdots & 0\\
\vdots & \vdots & \vdots & \vdots & \vdots & \ddots & \vdots  \\
v_{n-2} & -w_{n-2} & 0 & 0 & 0 & \cdots & 0
\end{pmatrix}
\]
with
\[
X = \begin{pmatrix}
0 & 0 & a & x-y & v_1 & \cdots & v_{n-2} \\
0 & 0 & x+y & a & w_1 & \cdots & w_{n-2} \\
a & x+y & 0 & 0 & 0 & \cdots & 0 \\
x-y & a & 0 & 0 & 0 & \cdots & 0 \\
v_1 & w_1 & 0 & 0 & 0 & \cdots & 0 \\
\vdots & \vdots & \vdots & \vdots & \vdots & \ddots & \vdots\\
v_{n-2} & w_{n-2} & 0 & 0 & 0 & \cdots & 0
\end{pmatrix} \in T_oM^{2n-1}_2 \subset \cp.
\]
From this we  deduce the following result.

\begin{thm} 
The homogeneous real hypersurface $M^{2n-1}_2$ in $({Q^n}^*,g)$ has four distinct constant principal curvatures $2$, $0$, $1$, $-1$ with multiplicities $1$, $2$, $n-2$, $n-2$, respectively. The principal curvature spaces $T_2$, $T_0$, $T_1$ and $T_{-1}$ are
\[
T_2   = \cp_{\alpha_1} = \RR J\zeta,\ 
T_0  = \RR H_{\alpha+2\alpha_2} \oplus \cp_{\alpha_1\oplus 2\alpha_2} = \calC \ominus \calQ  ,\ 
T_1  = \cp_{\alpha_1+\alpha_2},\ 
T_{-1}  = \cp_{\alpha_2}.
\]
In particular, $T_1$ and $T_{-1}$ are mapped into each other by the structure tensor field $\phi$.
Moreover, the shape operator $A$ and the structure tensor field $\phi$ of $M^{2n-1}_2$ satisfy
\[
A \phi + \phi A = 0.
\]
\end{thm}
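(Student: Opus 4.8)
The plan is to extract every assertion from the explicit matrix $AX$ that was just computed, since that computation already carries out the analytic work; what remains is bookkeeping in the root space decomposition of $\cp$ together with a few facts from Theorem \ref{ghchP} and Section \ref{hcs}. Reading the displayed formula for $AX$ shows that $A$ acts diagonally on the five blocks of $T_oM^{2n-1}_2$: it annihilates the $\ca_1 = \RR H_{\alpha_1+2\alpha_2}$ direction (the parameter $a$) and the $\cp_{\alpha_1+2\alpha_2}$ direction (the parameter $y$), multiplies the $\cp_{\alpha_1}$ direction (the parameter $x$) by $2$, fixes the $\cp_{\alpha_1+\alpha_2}$ direction (the parameter $v$), and reverses the sign of the $\cp_{\alpha_2}$ direction (the parameter $w$). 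This yields at once the four constant principal curvatures $2,0,1,-1$ with principal curvature spaces $T_2 = \cp_{\alpha_1}$, $T_0 = \RR H_{\alpha_1+2\alpha_2} \oplus \cp_{\alpha_1+2\alpha_2}$, $T_1 = \cp_{\alpha_1+\alpha_2}$, $T_{-1} = \cp_{\alpha_2}$ and multiplicities $1,2,n-2,n-2$.

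Next I would translate these root spaces into the geometric invariants. Because $\zeta = \frac{1}{2}H_{\alpha_1}$ and, by Theorem \ref{ghchP}, $\CC H_{\alpha_1} = \RR H_{\alpha_1} \oplus \cp_{\alpha_1}$ is a complex line, the one-dimensional space $\cp_{\alpha_1}$ equals $\RR JH_{\alpha_1} = \RR J\zeta$; hence $T_2 = \RR J\zeta = \calC^\perp$ and $\xi = -J\zeta$ spans $T_2$, so $A\xi = 2\xi$ and $M^{2n-1}_2$ is Hopf. For $T_0$ I would invoke the real structure $C_0$: the computation $C_0(\CC H_{\alpha_1}) = \CC H_{\alpha_1+2\alpha_2} = \RR H_{\alpha_1+2\alpha_2}\oplus\cp_{\alpha_1+2\alpha_2}$ from Section \ref{hcs} identifies $T_0$ with $\CC C_0\zeta$.

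The identification $T_0 = \calC \ominus \calQ$ is the step demanding the most care, and I expect it to be the main obstacle. Since $\zeta$ is $\cA$-isotropic, the span $\{C\zeta : C \in \cA_0\}$ equals $\CC C_0\zeta$, and one checks that $C_0$ (hence every $C \in \cA_0$) preserves the orthogonal complement of $\CC\zeta \oplus \CC C_0\zeta$ in $\cp$; this complement is therefore the maximal $\cA_0$-invariant subbundle, so $\calQ_o = \cp \ominus (\CC\zeta \oplus \CC C_0\zeta)$. Expanding $\CC\zeta = \RR H_{\alpha_1}\oplus\cp_{\alpha_1}$ and $\CC C_0\zeta = \RR H_{\alpha_1+2\alpha_2}\oplus\cp_{\alpha_1+2\alpha_2}$ inside the decomposition $\cp = \ca \oplus \cp_{\alpha_1} \oplus \cp_{\alpha_2} \oplus \cp_{\alpha_1+\alpha_2} \oplus \cp_{\alpha_1+2\alpha_2}$ leaves $\calQ_o = \cp_{\alpha_1+\alpha_2} \oplus \cp_{\alpha_2} = T_1 \oplus T_{-1}$. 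As $\calC_o = T_oM^{2n-1}_2 \ominus \RR\xi = T_0 \oplus T_1 \oplus T_{-1}$, subtraction gives $T_0 = \calC \ominus \calQ$.

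Finally I would treat $\phi$. On $\calC$ one has $\phi = J$, because $JX = \phi X + \eta(X)\zeta$ with $\eta|_{\calC} = 0$, so the relation $J\cp_{\alpha_2} = \cp_{\alpha_1+\alpha_2}$, inherited from $\hat{J}\cg_{\alpha_2} = \cg_{\alpha_1+\alpha_2}$ in Section \ref{hcs}, gives $\phi T_1 = J\cp_{\alpha_1+\alpha_2} = \cp_{\alpha_2} = T_{-1}$ and likewise $\phi T_{-1} = T_1$. The relation $A\phi + \phi A = 0$ then follows by a direct check on the eigenspaces: on $\RR\xi = T_2$ we have $\phi\xi = 0$ and $A\xi = 2\xi$, so $(A\phi + \phi A)\xi = 2\phi\xi = 0$; on the $\phi$-invariant complex line $T_0$ the operator $A$ vanishes, so $A\phi + \phi A = A\phi = 0$; and on $\calQ = T_1 \oplus T_{-1}$, for $X \in T_1$ we obtain $A\phi X = -\phi X$ (since $\phi X \in T_{-1}$) while $\phi AX = \phi X$, whose sum is zero, with the symmetric computation on $T_{-1}$. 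Every remaining claim of the theorem being already in hand, this completes the plan.
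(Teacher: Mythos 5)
Your proposal is correct and follows essentially the same route as the paper: the paper's proof of this theorem consists precisely of reading off the displayed matrix for $AX$ (obtained from $\hat{A}\hat{X} = [\zeta,\hat{X}]_{\ch_1}$ and projection to $\cp$), exactly as you do, with the identifications $\cp_{\alpha_1} = \RR J\zeta$, $T_0 = \CC C_0\zeta = \calC \ominus \calQ$, $J\cp_{\alpha_2} = \cp_{\alpha_1+\alpha_2}$ and the eigenspace verification of $A\phi + \phi A = 0$ left implicit or drawn from Sections \ref{tchq} and \ref{hcs}. Your spelled-out bookkeeping (in particular computing $\calQ_o = \cp \ominus (\CC\zeta \oplus \CC C_0\zeta) = \cp_{\alpha_1+\alpha_2} \oplus \cp_{\alpha_2}$) is a sound and slightly more detailed rendering of the same argument.
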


Note that $T_1 \subset V(C_0)$ and $T_{-1} \subset JV(C_0)$.

The eigenspaces $T_2$, $T_0$, $T_1$ and $T_{-1}$ of the shape operator $A$ and the eigenspaces $E_0$, $E_{-1}$ and $E_{-4}$ of the normal Jacobi operator $\calK = \calK_\zeta$ satisfy
\[
T_2 = E_{-4} ,\ T_0 = E_0  ,\ T_{-1} \oplus T_1 = E_{-1}.
\]
It follows that $A$ and $\calK$ are simultaneously diagonalizable and hence $A\calK = \calK A$. Thus we have proved the following.

\begin{cor} 
The homogeneous Hopf hypersurface $M^{2n-1}_2$ in $({Q^n}^*,g)$ is curvature-adapted.
\end{cor}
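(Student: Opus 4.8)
The plan is to verify the two defining conditions of a curvature-adapted submanifold directly at the base point $o$. Since $M^{2n-1}_2$ is a homogeneous real hypersurface, it suffices to check the conditions for a single unit normal vector $\zeta$ at $o$; replacing $\zeta$ by $-\zeta$ leaves $\bar{R}_\zeta$ unchanged and only reverses the sign of the shape operator, so commutativity is unaffected. Condition (i), that $\bar{R}_\zeta(T_oM^{2n-1}_2) \subseteq T_oM^{2n-1}_2$, holds automatically for any real hypersurface: the Jacobi operator $\bar{R}_\zeta$ is self-adjoint and satisfies $\bar{R}_\zeta\zeta = \bar{R}(\zeta,\zeta)\zeta = 0$, so its image is orthogonal to $\zeta$ and hence contained in $(\RR\zeta)^\perp = T_oM^{2n-1}_2$. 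In particular the normal Jacobi operator $\calK_\zeta = \bar{R}_\zeta|_{T_oM^{2n-1}_2}$ is well defined.

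The substance lies in condition (ii), $\calK_\zeta A = A\calK_\zeta$, which I would establish by showing that each principal curvature space of $A$ is contained in an eigenspace of $\calK_\zeta$. First I would note that $\zeta = \tfrac12 H_{\alpha_1}$ is an $\cA$-isotropic singular tangent vector, because $H_{\alpha_1} \in \ca^1 \subset T_oB_1$ and all nonzero tangent vectors of $B_1$ are $\cA$-isotropic by Theorem \ref{ghchP}. Therefore the eigenvalue computation of Section \ref{tchq} for the $\cA$-isotropic case applies, giving $\bar{R}_\zeta$ the eigenvalues $0,-1,-4$ with eigenspaces $\RR\zeta\oplus\CC C_0\zeta$, $\cp\ominus(\CC\zeta\oplus\CC C_0\zeta)$ and $\RR J\zeta$. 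Restricting to $T_oM^{2n-1}_2$ only deletes the normal line $\RR\zeta$, so $\calK_\zeta$ has eigenspaces $\CC C_0\zeta$, $\cp\ominus(\CC\zeta\oplus\CC C_0\zeta)$ and $\RR J\zeta$ for the eigenvalues $0,-1,-4$.

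Next I would match these three eigenspaces with the principal curvature spaces $T_0$, $T_1\oplus T_{-1}$ and $T_2$ of the theorem above, working in terms of restricted root spaces. The relation $T_2 = \cp_{\alpha_1} = \RR J\zeta$ is immediate. For the remaining two I would invoke the computation of Section \ref{hcs} that the real structure $C_0$ carries the normal space $\CC H_{\alpha_1} = \RR H_{\alpha_1}\oplus\cp_{\alpha_1}$ onto $\CC H_{\alpha_1+2\alpha_2} = \RR H_{\alpha_1+2\alpha_2}\oplus\cp_{\alpha_1+2\alpha_2}$. Since $\CC\zeta = \RR H_{\alpha_1}\oplus\cp_{\alpha_1}$, this yields $\CC C_0\zeta = \RR H_{\alpha_1+2\alpha_2}\oplus\cp_{\alpha_1+2\alpha_2} = T_0$, and hence $\CC\zeta\oplus\CC C_0\zeta = \ca\oplus\cp_{\alpha_1}\oplus\cp_{\alpha_1+2\alpha_2}$, whose orthogonal complement in $\cp$ is exactly $\cp_{\alpha_1+\alpha_2}\oplus\cp_{\alpha_2} = \calQ = T_1\oplus T_{-1}$. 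Thus $\calK_\zeta$ and $A$ share the eigenspace decomposition $T_2 \oplus T_0 \oplus (T_1\oplus T_{-1})$, so they are simultaneously diagonalizable and commute, which together with condition (i) proves that $M^{2n-1}_2$ is curvature-adapted.

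I expect the only delicate point to be the root-space identification $C_0(\CC H_{\alpha_1}) = \CC H_{\alpha_1+2\alpha_2}$, since this is where the specific action of the real structure on the normal space enters; but this has already been carried out in Section \ref{hcs}, after which the three eigenspace matchings and the resulting commutativity are routine.
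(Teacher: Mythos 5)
Your proposal is correct and takes essentially the same approach as the paper: both match the principal curvature spaces $T_2$, $T_0$, $T_1 \oplus T_{-1}$ with the eigenspaces $E_{-4}$, $E_0$, $E_{-1}$ of the normal Jacobi operator for the $\cA$-isotropic unit normal $\zeta = \frac{1}{2}H_{\alpha_1}$, and conclude that $A$ and $\calK_\zeta$ are simultaneously diagonalizable, hence commute. Your explicit verification of condition (i) and your root-space derivation of $\CC C_0\zeta = \RR H_{\alpha_1+2\alpha_2} \oplus \cp_{\alpha_1+2\alpha_2} = T_0$ simply spell out details the paper leaves implicit.
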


By construction, the integral curves of the Reeb vector field $\xi$ are congruent to a horocycle in a complex hyperbolic line $\CC H^1(-4)$. Such a horocycle has constant geodesic curvature $2$. As in previous cases, this leads to the geometric interpretation of $M^{2n-1}_2$ being obtained by attaching isometric copies of the homogeneous complex hypersurface $P^{n-1} \cong \CC H^{n-1}(-4)$ to the horocycle in a suitable way. Equivalently, $M^{2n-1}_2$ is the canonical extension of a horocycle in the boundary component $B_1 \cong \CC H^1(-4)$.

\section{Curvature} \label{curvature}

In this section we compute the Ricci tensor $\Ric_\alpha$ and the scalar curvature $s_\alpha$ of the homogeneous Hopf hypersurface $M^{2n-1}_\alpha$ in $({Q^n}^*,g)$. Let $R_\alpha$, $\Ric_\alpha$, $s_\alpha$ be the Riemannian curvature tensor, Ricci tensor, scalar curvature of $M^{2n-1}_\alpha$, respectively. Let $A_\alpha$ and $\calK_\alpha$ be the shape operator and normal Jacobi operator of $M^{2n-1}_\alpha$ with respect to the unit normal vector $\zeta_\alpha$, respectively. The Gauss equation tells us that
\[
g(\bar{R}(X,Y)Z, W) = g(R_\alpha(X, Y)Z,W) - g(A_\alpha Y,Z)g(A_\alpha X,W) + g(A_\alpha X,Z)g(A_\alpha Y,W)
\]
for all $X,Y,Z,W \in \cX(M^{2n-1}_\alpha)$. 
Contracting the Gauss equation gives, after some straightforward computations, the expression
\[
\Ric_\alpha X = -2nX - \calK_\alpha X + \alpha A_\alpha X - A_\alpha^2X,
\]
where we used the fact that the Ricci tensor of $({Q^n}^*,g)$ is equal to $-2ng$ and $\tr(A_\alpha) = \alpha$ by Theorem \ref{mainthm}. Since the unit normal vector $\zeta_\alpha$ of $M^{2n-1}_\alpha$ is $\cA$-isotropic, the normal Jacobi operator $\calK_\alpha$ of $M^{2n-1}_\alpha$ satisfies
\[
\calK_\alpha X = 
\begin{cases}
0 & ,\mbox{ if } X \in \calC \ominus \calQ = T_0, \\
-X & ,\mbox{ if } X \in \calQ = T_{-1} \oplus T_1 ,\\
-4X & ,\mbox{ if } X \in \calC^\perp = \RR\xi = T_\alpha
\end{cases}
\]
by Theorem \ref{mainthm} and the description of the Jacobi operator  in Section \ref{tchq}. It follows that 
\[
\Ric_\alpha X = 
\begin{cases}
-2nX & ,\mbox{ if } X \in \calC \ominus \calQ = T_0, \\
(-2n-\alpha)X & ,\mbox{ if } X \in T_{-1} , \\
(-2n+\alpha)X & ,\mbox{ if } X \in T_1 , \\
(-2n+4)X & ,\mbox{ if } X \in \calC^\perp = \RR\xi = T_\alpha.
\end{cases}
\]
It follows that the Ricci tensor of $M^{2n-1}_\alpha$ has two (if $\alpha = 0$), three (if $\alpha = 4$) or four (if $\alpha \notin \{0,4\}$) constant eigenvalues. More specifically, for $\alpha = 0$ we obtain
\[
\Ric_0 X  = -2n X + 4\eta(X)\xi,
\]
which means that $M^{2n-1}_0$ is pseudo-Einstein (see \cite{Ko79}). 

\begin{prop}
The minimal homogeneous real hypersurface $M^{2n-1}_0$ is a pseudo-\break Einstein Hopf hypersurface in $({Q^n}^*,g)$. In particular, the Ricci tensor $\Ric_0$ of $M^{2n-1}_0$ is $\phi$-invariant, that is, $\Ric_0 \circ \phi = \phi \circ \Ric_0$.
\end{prop}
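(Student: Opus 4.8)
The plan is to derive both assertions directly from the explicit expression
\[
\Ric_0 X = -2nX + 4\eta(X)\xi
\]
established in the displayed case analysis immediately preceding the proposition (the specialization to $\alpha = 0$ of the general formula $\Ric_\alpha X = -2nX - \calK_\alpha X + \alpha A_\alpha X - A_\alpha^2X$). First I would recall the definition of a pseudo-Einstein real hypersurface from \cite{Ko79}: the Ricci operator, as a self-adjoint endomorphism field, is a combination $a\,\id + b\,(\eta\otimes\xi)$ with \emph{constant} coefficients $a,b$, where $(\eta\otimes\xi)(X) = \eta(X)\xi$. The formula above is visibly of exactly this shape, with $a = -2n$ and $b = 4$. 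Hence the first assertion needs no argument beyond quoting the formula and matching it against the definition.

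For the $\phi$-invariance I would compute $\Ric_0\circ\phi$ and $\phi\circ\Ric_0$ separately and compare, using the two structural identities $\phi\xi = 0$ and $\eta\circ\phi = 0$ recorded in Section \ref{mcs}; the latter holds because $\eta(\phi X) = g(\phi X,\xi) = -g(X,\phi\xi) = 0$ by skew-symmetry of $\phi$. Applying the Ricci formula to $\phi X$ gives $\Ric_0(\phi X) = -2n\,\phi X + 4\,\eta(\phi X)\xi = -2n\,\phi X$, while applying $\phi$ to $\Ric_0 X$ gives $\phi(\Ric_0 X) = -2n\,\phi X + 4\,\eta(X)\,\phi\xi = -2n\,\phi X$. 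Since both expressions equal $-2n\,\phi X$ for every $X \in \cX(M^{2n-1}_0)$, I conclude $\Ric_0\circ\phi = \phi\circ\Ric_0$.

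Honestly there is no substantial obstacle once the Ricci operator has been diagonalized as in the paragraph before the proposition; both claims then reduce to the vanishing relations $\phi\xi = 0$ and $\eta\circ\phi = 0$. The only point deserving a word of care is that the notion of pseudo-Einstein in \cite{Ko79} demands genuinely \emph{constant} coefficients, not merely pointwise multipliers. This is where the homogeneity of $M^{2n-1}_0$ is implicitly used: the constancy of the principal curvatures and of the normal Jacobi operator's eigenvalues established in Theorem \ref{gEso} (together with the fact that the ambient Ricci tensor is $-2ng$) is precisely what makes $-2n$ and $4$ constant over the whole hypersurface rather than functions of the base point.
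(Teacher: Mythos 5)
Your proposal is correct and follows essentially the same route as the paper: the paper's proof is precisely the displayed computation preceding the proposition, which specializes $\Ric_\alpha X = -2nX - \calK_\alpha X + \alpha A_\alpha X - A_\alpha^2 X$ to $\alpha = 0$ to obtain $\Ric_0 X = -2nX + 4\eta(X)\xi$, after which pseudo-Einstein follows by matching Kon's definition and $\phi$-invariance follows from $\phi\xi = 0$ and $\eta\circ\phi = 0$, exactly as you argue. Your closing remark on why the coefficients $-2n$ and $4$ are genuinely constant (via the constant principal curvatures and constant normal Jacobi eigenvalues of the homogeneous hypersurface) is a sound and welcome addition, though the paper leaves it implicit.
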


We also see that
\[
\Ric_\alpha \circ \phi + \phi \circ \Ric_\alpha = -4n \phi .
\]
This equation is motivated by Ricci solitons (see \cite{BS22}, Lemma 3.3.11). However, none of the homogeneous Hopf hypersurfaces $M^{2n-1}_\alpha$ is a Ricci soliton.

By contracting the Ricci tensor we see that the scalar curvature of $M^{2n-1}_\alpha$ is independent of $\alpha$. 

\begin{prop}
The scalar curvature $s_\alpha$ of the homogeneous Hopf hypersurface $M^{2n-1}_\alpha$ in $({Q^n}^*,g)$ does not depend on $\alpha$ and satisfies 
\[
s_\alpha = 4-2n(2n-1).
\]
\end{prop}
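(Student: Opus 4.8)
The plan is to obtain the scalar curvature simply as the trace of the Ricci endomorphism $\Ric_\alpha$, whose eigenvalues and eigenspaces have just been determined. Since the scalar curvature is the metric trace of the Ricci tensor, and $\Ric_\alpha$ is already diagonalized with respect to the orthogonal decomposition $T M^{2n-1}_\alpha = (\calC \ominus \calQ) \oplus T_1 \oplus T_{-1} \oplus \RR\xi$, I would compute $s_\alpha = \tr(\Ric_\alpha)$ by summing each eigenvalue weighted by the dimension of its eigenspace. The relevant multiplicities are $\dim(\calC \ominus \calQ) = 2$, $\dim T_1 = \dim T_{-1} = n-2$, and $\dim(\RR\xi) = 1$, as recorded for the principal curvature spaces of $M^{2n-1}_\alpha$.

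First I would write down the four contributions,
\[
s_\alpha = 2\cdot(-2n) + (n-2)(-2n+\alpha) + (n-2)(-2n-\alpha) + 1\cdot(-2n+4).
\]
The key observation is that the two eigenspaces $T_1$ and $T_{-1}$ carry the same multiplicity $n-2$, while their Ricci eigenvalues $-2n+\alpha$ and $-2n-\alpha$ are symmetric in $\alpha$; hence their combined contribution is $(n-2)\bigl((-2n+\alpha)+(-2n-\alpha)\bigr) = -4n(n-2)$, in which the parameter $\alpha$ cancels. This cancellation is the entire reason for the $\alpha$-independence of $s_\alpha$, and it reflects the fact that the structure tensor field $\phi$ interchanges $T_1$ and $T_{-1}$ (together with $JT_1 = T_{-1}$ from the preceding theorems), so the two eigenvalues always occur as a balanced pair.

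It then remains to add the surviving terms and simplify. Collecting everything gives $s_\alpha = -4n - 4n(n-2) + (-2n+4) = -4n^2 + 2n + 4$, which I would rewrite as $4 - 2n(2n-1)$ to match the asserted value. There is no genuine obstacle here: the only point requiring care is to use the correct multiplicities $2, n-2, n-2, 1$ (summing to $\dim M^{2n-1}_\alpha = 2n-1$), and to keep the Hopf direction $\RR\xi$ as a separate one-dimensional eigenspace with Ricci eigenvalue $-2n+4$ even in the degenerate case $\alpha = 0$, where its shape-operator eigenvalue coincides with that of $\calC \ominus \calQ$ but its Ricci eigenvalue does not.
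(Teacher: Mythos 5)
Your proposal is correct and is essentially the paper's own argument: the paper simply contracts the Ricci tensor, i.e.\ takes the trace of $\Ric_\alpha$ over the eigenspace decomposition with multiplicities $2$, $n-2$, $n-2$, $1$, and the $\alpha$-dependence cancels exactly through the balanced pair $(-2n+\alpha)$, $(-2n-\alpha)$ on $T_1 \oplus T_{-1}$, as you observe. Your arithmetic $s_\alpha = -4n^2 + 2n + 4 = 4 - 2n(2n-1)$ matches the stated value, and your remark about keeping $\RR\xi$ as a separate Ricci eigenspace even when $\alpha = 0$ is a correct and careful point.
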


\end{document}